\definecolor{darkgreen}{rgb}{0,0.5,0}
\definecolor{darkred}{rgb}{0.7,0,0}
\theoremstyle{plain}
\newtheorem{lemma}{Lemma}[section]
\newtheorem{thm}[lemma]{Theorem}
\newtheorem{cor}[lemma]{Corollary}
\theoremstyle{definition}
\newtheorem{rmk}[lemma]{Remark}
\numberwithin{equation}{section}
\newcommand{\m}{\ensuremath{{\cal M}}}
\newcommand{\n}{\ensuremath{{\cal N}}}
\newcommand{\cb}{\ensuremath{{\cal B}}}
\newcommand{\calr}{\ensuremath{{\cal R}}}
\newcommand{\ti}{\tilde}
\newcommand{\al}{\alpha}
\newcommand{\be}{\beta}
\newcommand{\ga}{\gamma}
\newcommand{\de}{\delta}
\newcommand{\Om}{\Omega}
\newcommand{\si}{\sigma}
\newcommand{\vph}{\varphi}
\newcommand{\ep}{\varepsilon}
\newcommand{\R}{\ensuremath{{\mathbb R}}}
\newcommand{\N}{\ensuremath{{\mathbb N}}}
\newcommand{\downto}{\downarrow}
\newcommand{\grad}{\nabla}
\newcommand{\intersect}{\cap}
\DeclareMathOperator{\VolB}{VolB}
\DeclareMathOperator{\Interior}{Interior}
\DeclareMathOperator{\inj}{inj}
\def\blbox{\quad \vrule height7.5pt width4.17pt depth0pt}
\newcommand{\beq}{\begin{equation}}
\newcommand{\eeq}{\end{equation}}
\newcommand{\beqa}{\begin{equation}\begin{aligned}}
\newcommand{\eeqa}{\end{aligned}\end{equation}}
\newcommand{\brmk}{\begin{rmk}}
\newcommand{\ermk}{\end{rmk}}
\newcommand{\partref}[1]{\hbox{(\csname @roman\endcsname{\ref{#1}})}}
\newcommand{\half}{\frac{1}{2}}
\newcommand{\cmt}[1]{\opt{draft}{\textcolor[rgb]{0.5,0,0}{
$\LHD$ #1 $\RHD$\marginpar{\blbox}}}}
\newcommand{\bcmt}[1]{\opt{draft}{\textcolor[rgb]{0,0,0.9}{
$\LHD$ #1 $\RHD$\marginpar{\blbox}}}}
\newcommand{\Rm}{{\mathrm{Rm}}}
\newcommand{\Ric}{{\mathrm{Ric}}}
\title{{
\bf
Local mollification of Riemannian metrics using Ricci flow,
and Ricci limit spaces
} 
\\ 
\cmt{DRAFT with comments}
}
\author{Miles Simon and Peter M. Topping}
\date{6 May 2020}
\begin{document}

\maketitle
\parskip=10pt

\begin{abstract}
Given a three-dimensional Riemannian manifold containing a ball with an explicit lower bound on its Ricci curvature and positive lower bound on its volume, we use Ricci flow to perturb the Riemannian metric on the interior to a nearby Riemannian metric still with such lower bounds on its Ricci curvature and volume, but additionally with uniform bounds on its full curvature tensor and all its derivatives. The new manifold is near to the old one not just in the Gromov-Hausdorff sense, but also 
in the sense that the distance function is uniformly close to what it was before, and additionally  we have H\"older/Lipschitz equivalence of the old and new manifolds.

One consequence is that we obtain a local bi-H\"older correspondence between Ricci limit spaces in three dimensions and smooth manifolds.
This is more than a complete resolution
of the three-dimensional case of the conjecture of Anderson-Cheeger-Colding-Tian, describing how Ricci limit spaces in three dimensions must be homeomorphic to manifolds, and we obtain this in 
the most general, locally non-collapsed case.
The proofs build on results and ideas  from recent papers of Hochard and the current authors.
\end{abstract}

\parskip=-9pt
\tableofcontents
\parskip=10pt

\section{Introduction}

Since its introduction in 1982 by Hamilton, Ricci flow has become central to our efforts to understand the topology of manifolds that are either low dimensional, or that satisfy a curvature condition. See for example \cite{ham3D, P1, BW, BS}. Loosely speaking, the strategy is to allow Ricci flow to evolve such a manifold into one that we can recognise.
In this paper we investigate the use of Ricci flow in a different direction. 
We study its use  as a local mollifier of Riemannian metrics,
taking an initial metric that is coarsely controlled on some local region,
and giving back a 
smooth metric satisfying good estimates on a slightly smaller region.

We will apply our theory in order to understand Ricci limit spaces, i.e. Gromov-Hausdorff limits of sequences of manifolds with a uniform lower bound on their Ricci tensors, and a uniform positive lower bound on the volume of one unit ball.

Our main Ricci flow result is the following theorem, which applies to Riemannian 3-manifolds that are not assumed to be complete.
The result without the uniform conclusions on Ricci curvature, volume 
and distance follows from recent work of Hochard \cite{hochard}, and our proof crucially uses some of his ideas as we describe later.
We will use our uniform conclusions on Ricci curvature, volume 
and distance in applications to Ricci limit spaces.

\bcmt{adjusting the theorem below to make $c_0$ independent of $\al_0$ would require making the hypotheses messier. (We'd have to restate the volume lower bound hypothesis to apply to many balls.) I think it's easier to leave like this because anyone can come after and apply our estimates to blow-ups  to give the better dependency. Could think about the dependencies on $\ep$. There is no dependency of $v$ on $\ep$, but this is so obvious, it is barely worth the extra ink to spell it out.}

\newcommand{\wasB}{{\mathfrak{B}}}

\begin{thm}[Mollification theorem]
\label{mollifier_thm}
Suppose that $(M^3,g_0)$ is a Riemannian manifold, $x_0\in M$,  $B_{g_0}(x_0,1)\subset\subset M$, with geometry controlled by
\begin{equation}
\left\{
\begin{aligned}
& \Ric_{g_0}\geq -\al_0<0 \qquad & &\text{on } B_{g_0}(x_0,1)\\
& \VolB_{g_0}(x_0,1)\geq v_0>0 
\end{aligned}
\right.
\end{equation}
Then for any $\ep\in (0,1/10)$, there exist $T,v,\al,c_0>0$ depending only on $\al_0$, $v_0$ and $\ep$,
and a Ricci flow $g(t)$ defined for $t\in [0,T]$ on 
the slightly smaller ball $\wasB:=B_{g_0}(x_0,1-\ep)$, with
$g(0)=g_0$ on $\wasB$, such that
for each $t\in [0,T]$, $B_{g(t)}(x_0,1-2\ep)\subset\subset\wasB$ where the Ricci flow is defined,  and 
\begin{equation}
\label{moll_thm_conc1}
\left\{
\begin{aligned}
& \Ric_{g(t)}\geq -\al \qquad & &\text{on } 
\wasB\\
& \VolB_{g(t)}(x_0,1-2\ep)\geq v>0. & &\\
\end{aligned}
\right.
\end{equation}
Moreover, the curvature tensor and all its derivatives are controlled according to
\begin{equation}
\left\{
\begin{aligned}
\label{curv_control_moll}
& |\Rm|_{g(t)}\leq c_0/t \\
& \left|\grad^k \Rm\right|_{g(t)}\leq 
C(k,\al_0,v_0,\ep)/t^{1+\frac{k}{2}}
\end{aligned}
\right.
\end{equation}
on $\wasB$ for all $t\in (0,T]$ and $k\in\N$.
If we fix $s\in [0,T]$ and $x,y\in B_{g(s)}(x_0,\half-2\ep)$, then
$x,y\in B_{g(t)}(x_0,\half-\ep)$ for all $t\in [0,T]$ so 
the infimum length of curves connecting $x$ and $y$ within $(\wasB,g(t))$ is realised by a geodesic within $B_{g(t)}(x_0,1-2\ep)\subset\wasB$ 
where the Ricci flow is defined.
Moreover, for any $t\in [0,T]$, and such $x$ and $y$,
we have
\beq
\label{main_dist_est3}
d_{g_0}(x,y)-\be\sqrt{c_0 t}
\leq d_{g(t)}(x,y)
\leq e^{\al t}d_{g_0}(x,y),
\eeq
where $\be=\be(n)\geq 1$, and the H\"older estimate
$$d_{g_0}(x,y)\leq \ga\left[d_{g(t)}(x,y)\right]^\frac{1}{1+4c_0}$$
where $\ga<\infty$ depends only on $c_0$.
\end{thm}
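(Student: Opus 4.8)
The plan is to read off the final paragraph from the data produced by the earlier (main) part of the proof: the Ricci flow $g(r)$ on $\wasB\times[0,T]$ with $g(0)=g_0$, the bounds $\Ric_{g(r)}\ge-\al$ and $|\Rm|_{g(r)}\le c_0/r$ on $\wasB$, the inclusion $B_{g(r)}(x_0,1-2\ep)\subset\subset\wasB$ for every $r\in[0,T]$, and the fact that $g(r)$ attains its initial data continuously, so $d_{g(r)}(\cdot,\cdot)\to d_{g_0}(\cdot,\cdot)$ as $r\downto 0$ on compact subsets of the interior. I allow myself to shrink $T$ finitely often, keeping it dependent only on $\al_0,v_0,\ep$ (legitimate, since $\al$ and $c_0$ already have this dependence). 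The single analytic ingredient is the pair of evolution inequalities for the distance of two fixed points $p,q\in\wasB$: provided, at all times $r$ in a subinterval of $(0,T]$, the $g(r)$-minimising geodesic from $p$ to $q$ and the $g(r)$-balls of radius $\sqrt{r/c_0}$ about $p$ and $q$ lie in $\wasB$, one has there
\[
-\be(n)\sqrt{c_0/r}\ \le\ \tfrac{d^+}{dr}\,d_{g(r)}(p,q)\ \le\ \al\, d_{g(r)}(p,q)\, ,
\]
the right inequality being Hamilton's first-variation trick with $\Ric_{g(r)}\ge-\al$, and the left one Hamilton's changing-distances estimate with radius $r_0=\sqrt{r/c_0}$ and $|\Ric|_{g(r)}\le 2c_0/r$ (dimension three); the regime $d_{g(r)}(p,q)<2\sqrt{r/c_0}$ is handled separately on integration, the point being only that distances are then already tiny.

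First I would prove the ball inclusion by continuity. Fix $x\in B_{g(s)}(x_0,\half-2\ep)$ and let $S\subseteq[0,T]$ be the set of $r$ with $d_{g(\rho)}(x_0,x)<\half-\ep$ for all $\rho$ between $s$ and $r$; this is relatively open and contains a neighbourhood of $s$. On $S$ the triangle inequality puts the $g(\rho)$-minimising geodesic from $x_0$ to $x$ inside $B_{g(\rho)}(x_0,\half-\ep)$ and, for $T$ small, the radius-$\sqrt{\rho/c_0}$ balls about $x_0,x$ inside $B_{g(\rho)}(x_0,1-2\ep)\subset\subset\wasB$; integrating the evolution inequalities (with the trivial bound in the tiny-distance regime) then gives $d_{g(r)}(x_0,x)\le e^{\al T}(\half-2\ep)$ for $r\ge s$ and $d_{g(r)}(x_0,x)\le(\half-2\ep)+2\be(n)\sqrt{c_0T}$ for $r\le s$. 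Shrinking $T$ so that both right-hand sides are $<\half-\ep$ forces $S$ to be closed, hence $S=[0,T]$; doing this for $x$ and for $y$ gives $x,y\in B_{g(r)}(x_0,\half-\ep)$ for all $r\in[0,T]$. The geodesic realisation is then a soft argument: for fixed $r$, any curve from $x$ to $y$ in $\wasB$ meeting the complement of $B_{g(r)}(x_0,1-2\ep)$ has length $\ge 2\big((1-2\ep)-(\half-\ep)\big)=1-2\ep>d_{g(r)}(x,y)$, so a length-minimising sequence eventually stays in the compact set $\overline{B_{g(r)}(x_0,1-2\ep)}\subset\wasB$; a subsequence converges (Arzela--Ascoli) to a minimiser, which is a geodesic, and the same estimate confines it to the open ball $B_{g(r)}(x_0,1-2\ep)$ where the flow is defined.

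With these geodesics controlled at every time, the evolution inequalities hold for the pair $x,y$ on all of $(0,T]$. Integrating the right inequality on $[r,t]$ and letting $r\downto 0$ gives $d_{g(t)}(x,y)\le e^{\al t}d_{g_0}(x,y)$; integrating the left one on $[r,t]$ (disposing of the tiny-distance regime by a maximum-principle argument, and using $d_{g(r)}(x,y)\to d_{g_0}(x,y)$ as $r\downto 0$) gives $d_{g_0}(x,y)-\be(n)\sqrt{c_0t}\le d_{g(t)}(x,y)$, which is \eqref{main_dist_est3} once $\be$ is enlarged to absorb dimensional factors.

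For the H\"older estimate, write $\rho_0:=d_{g_0}(x,y)<1$. If $\rho_0\ge 2\be(n)\sqrt{c_0t}$, the additive bound already gives $d_{g(t)}(x,y)\ge\rho_0/2\ge\tfrac12\rho_0^{1+4c_0}$; otherwise put $r:=\rho_0^2/(4\be(n)^2c_0)\in(0,t]$, so that the additive estimate at time $r$ reads $d_{g(r)}(x,y)\ge\rho_0/2$. Along the $g(t)$-minimising geodesic $\si$ from $x$ to $y$ (which lies in $\wasB$) one has $\big|\partial_\tau\log g_\tau(V,V)\big|=2|\Ric_{g(\tau)}(V,V)|/g_\tau(V,V)\le 4c_0/\tau$, so integrating from $r$ to $t$ gives $g_t(V,V)\ge(r/t)^{4c_0}g_r(V,V)$ along $\si$, and hence
\[
d_{g(t)}(x,y)=L_{g(t)}(\si)\ \ge\ (r/t)^{2c_0}L_{g(r)}(\si)\ \ge\ (r/t)^{2c_0}d_{g(r)}(x,y)\ \ge\ \frac{\rho_0}{2}\left(\frac{\rho_0^2}{4\be(n)^2c_0t}\right)^{2c_0}=\frac{\rho_0^{\,1+4c_0}}{2\,(4\be(n)^2c_0t)^{2c_0}}\, .
\]
After a last shrinking of $T$ so that $4\be(n)^2c_0T\le 1$, we conclude $d_{g(t)}(x,y)\ge\tfrac12\,\rho_0^{1+4c_0}$ in all cases, i.e. $d_{g_0}(x,y)\le 2^{1/(1+4c_0)}\big[d_{g(t)}(x,y)\big]^{1/(1+4c_0)}$, the asserted estimate with $\ga\le 2$ depending only on $c_0$. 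The main obstacle I anticipate is not analytic --- Hamilton's two distance estimates and the elementary comparison of $g_\tau$ at nearby times are standard --- but organisational: keeping every minimising geodesic and every radius-$\sqrt{r/c_0}$ ball inside $\wasB$ throughout the continuity argument, so that \eqref{curv_control_moll} and the changing-distances estimate may legitimately be invoked, and fixing the several smallness requirements on $T$ compatibly.
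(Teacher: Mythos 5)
Your plan addresses only the final paragraph of the theorem's conclusions -- the ball inclusion $x,y\in B_{g(t)}(x_0,\half-\ep)$, the geodesic--realisation assertion, \eqref{main_dist_est3}, and the H\"older estimate -- and explicitly takes as given ``the data produced by the earlier (main) part of the proof.'' That earlier part is the bulk of the theorem: the construction of a Ricci flow on $\wasB$ living for a \emph{uniform} time $T(\al_0,v_0,\ep)$, the lower Ricci bound $\Ric_{g(t)}\geq-\al$, the $|\Rm|_{g(t)}\le c_0/t$ decay, the higher-derivative bounds in \eqref{curv_control_moll}, and the volume lower bound in \eqref{moll_thm_conc1}. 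None of these is established in your proposal. In the paper these come from rescaling and applying the Local Existence Theorem (Theorem~\ref{main_exist_thm2}), then Shi's local derivative estimates over nested balls located via Lemmata~\ref{EBL2} and~\ref{SBL}, and then the Lower Volume Control Lemma \cite[Lemma~2.3]{ST1}. Without those ingredients the theorem is not proved, so strictly speaking the proposal has a substantial gap; you cannot derive the flow's existence and uniform estimates, which is where the real difficulty and the $3$D-specificity lie.

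For the part you do address, your argument is essentially correct and runs parallel to the paper's. The paper simply appeals to Lemma~\ref{biholder_dist2} (after arranging, via Lemmata~\ref{EBL2} and~\ref{SBL}, that $B_{g(s)}(x_0,\half-2\ep)\subset B_{g(t)}(x_0,\half-\ep)$ for all $s,t\in[0,T]$, so that Remark~\ref{dist_rmk} applies with $r=\half-\ep$). You re-derive the content of that lemma from scratch: the continuity argument to obtain the ball inclusion is in effect a re-proof of the expanding and shrinking balls lemmata; the two-sided evolution inequality for $d_{g(t)}$ is Hamilton's first variation together with the Hamilton--Perelman changing-distances estimate, exactly as in the paper's Lemma~\ref{biholder_dist2}; and your H\"older argument -- splitting at time $t_3\sim c_0^{-1}\bigl[d_{g_0}(x,y)/\be\bigr]^2$, using the additive bound below $t_3$ and a multiplicative decay $\bigl(t_3/t\bigr)^{2c_0}$ above it -- is precisely the paper's mechanism and yields the same exponent $1/(1+4c_0)$. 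The only route difference is cosmetic: you compare lengths of a fixed $g(t)$-minimising geodesic at two times via $\partial_\tau\log g_\tau(V,V)$, whereas the paper integrates $\tfrac{d}{dt}\,d_{g(t)}(x,y)\ge -(n-1)\tfrac{c_0}{t}\,d_{g(t)}(x,y)$ directly; either works. One point you leave a little loose is ``disposing of the tiny-distance regime by a maximum-principle argument'' -- this is standard but should be spelled out (and the factor-$2$ bookkeeping on $\be$ must be consistent between the ODE form $-\be\sqrt{c_0/t}$ and the integrated form $-\be\sqrt{c_0\,t}$). With that cleaned up, your treatment of the distance estimates is a serviceable substitute for invoking Lemma~\ref{biholder_dist2}. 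The missing piece is everything before them.
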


\brmk
We clarify that within a Riemannian manifold $(M,g)$, containing a point $x$, we write $\VolB_g(x,r)$ for the volume (with respect to $g$) of a ball $B_g(x,r)$ centred at $x$ of radius $r$ (with respect to $g$). Later we allow $r\in \R$, and if $r\leq 0$, the ball is empty and the volume zero.
\ermk


There is an implicit subtlety in considering Riemannian distances on incomplete Riemannian manifolds that we avoid in the theorem above through the assertions that 
$$\textstyle
x,y\in B_{g(t)}(x_0,\half-\ep)\quad\text{and}\quad
B_{g(t)}(x_0,1-2\ep)\subset\subset\wasB,$$
as we explain in the following general remark (with $R=1-2\ep$).

\brmk
\label{dist_rmk}
If we have a Riemannian manifold $(N,g)$, and we are considering a ball $B_g(x_0,R)\subset\subset N$, then we can make sense of the distance between two points $x,y\in B_g(x_0,R)$ in two ways -- either as the infimum length of connecting paths that remain within $B_g(x_0,R)$, or as the infimum length of such paths that may stray anywhere within $N$. In general, the latter distance will be shorter. However, we observe that if we are sure that $x$ and $y$ lie within the \emph{half} ball
$B_g(x_0,R/2)$, then these two distances must agree,
and there exists a minimising geodesic between $x$ and $y$ that remains
within $B_g(x_0,R)$.
\ermk

Locally defined Ricci flows were  considered by D.Yang, assuming volume bounds from below, supercritical integral bounds on the initial Ricci curvature and a smallness condition on
the  $L^{n/2}$ norm of the initial Riemannian curvature tensor. See \cite[Theorem 9.2]{DY}. 
One could also draw a comparison with the theory of Ricci flows on manifolds with boundary, see the work of Gianniotis \cite{panagiotis} and the references therein.
They are also considered in the work of Hochard \cite{hochard}.

In this paper we apply the local Ricci flow we construct in Theorem \ref{mollifier_thm}
to understand the geometry of so-called Ricci limit spaces, i.e. the metric spaces that arise as limits of manifolds with a uniform lower Ricci bound, and a positive uniform lower bound on the volume of \emph{one} unit ball. 
Such limits need not be smooth; it is easy to see that a metric cone can arise as such a limit, even in two dimensions, and similarly we can even have a dense set of cone points
(e.g. \cite[Example 0.29]{cheeger_book}). All such cone points are singular points, defined to be the points in the limit for which there exists a tangent cone that is not Euclidean space.
Nevertheless, the following theorem establishes that in (up to) three dimensions, Ricci limit spaces are locally bi-H\"older to smooth manifolds.

\bcmt{the compactness of $(\cb,d_0)$ below is telling us that it does not hang off the edge of $\m$. And there are no punctures in $\m$ etc.}

\begin{thm}[Ricci limits in 3D are bi-H\"older homeomorphic to smooth manifolds locally]
\label{preACCT}
Suppose that $(M^3_i,g_i)$ is a sequence of (not necessarily complete) Riemannian manifolds such that for all $i$ we have $x_i\in M_i$,  $B_{g_i}(x_i,1)\subset\subset M_i$, and 
\begin{equation}
\left\{
\begin{aligned}
& \Ric_{g_i}\geq -\al_0<0 \qquad & &\text{on } B_{g_i}(x_i,1)\\
& \VolB_{g_i}(x_i,1)\geq v_0>0. 
\end{aligned}
\right.
\end{equation}
Then there exist a smooth three-dimensional manifold without boundary $\m$, containing a point $x_0$,  and a metric\footnote{in the sense of metric spaces}
$d_0:\m\times \m\to [0,\infty)$ generating the same topology as $\m$
such that $B_{d_0}(x_0,1/10)\subset\subset\m$ and so that
after passing to a subsequence the compact metric spaces 
$(\overline{B_{g_i}(x_i,1/10)},d_{g_i})$ Gromov-Hausdorff converge to 
$(\cb,d_0)$, where $\cb=\overline{B_{d_0}(x_0,1/10)}$.

Moreover there exists a sequence of smooth maps 
$\vph_i:\m\to B_{g_i}(x_i,1/2)\subset M_i$, mapping $x_0$ to $x_i$,
diffeomorphic onto their images, such that 
$$d_{g_i}(\vph_i(x),\vph_i(y))\to d_0(x,y)
\qquad\text{uniformly for }(x,y)\in \cb\times\cb,$$
as $i\to\infty$, and for all $\eta\in (0,1/100)$, 
\beq
\label{sandwich2}
B_{g_i}(x_i,1/10-\eta)\subset
\vph_i(\cb)\subset
B_{g_i}(x_i,1/10+\eta),
\eeq
for sufficiently large $i$.

Finally, for any \emph{smooth}  Riemannian metric $g$ on $\m$, the identity map 
$(\cb,d_g)\to (\cb,d_0)$ is H\"older continuous.
Conversely, the identity map 
$(\cb,d_0)\to (\cb,d_g)$ is Lipschitz continuous.
\end{thm}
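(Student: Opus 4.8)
The plan is to derive both assertions from the distance estimates~\eqref{main_dist_est3} of Theorem~\ref{mollifier_thm}, passed to the limit along the Ricci flows used to construct $\m$, after first reducing the case of a general smooth metric $g$ to that of the limiting Ricci-flow metric by a soft comparison. Recall that, in the construction underlying the earlier parts of the theorem, $\m$ is obtained as a smooth pointed Cheeger--Gromov limit of $(\wasB_i,g_i(t_1),x_i)$ for some fixed $t_1\in(0,T]$, where $g_i(\cdot)$ is the Ricci flow provided by Theorem~\ref{mollifier_thm} applied to $(M_i,g_i)$ (with a fixed small $\ep$), the $\vph_i$ are the associated approximating diffeomorphisms, and $\vph_i^\ast g_i(t_1)\to g_{t_1}$ in $C^\infty_{\mathrm{loc}}(\m)$ for some smooth metric $g_{t_1}$ on $\m$.

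Because $\cb=\overline{B_{d_0}(x_0,1/10)}$ is compact and $\cb\subset\subset\m$, one may fix a compact set $\cb^+$ with $\cb\subset\interior(\cb^+)$ and $\cb^+\subset\subset\m$. On $\cb^+$ any two smooth Riemannian metrics are uniformly equivalent as bilinear forms; moreover, for any smooth metric, $\cb$ lies at positive Riemannian distance from $\m\setminus\interior(\cb^+)$ and has finite Riemannian diameter (a smooth metric induces the manifold topology, for which $\cb$ is compact). A routine splitting into small and large distances then shows that, for every smooth metric $g$ on $\m$, the distance functions $d_g$ and $d_{g_{t_1}}$ restricted to $\cb\times\cb$ are bi-Lipschitz equivalent. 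Hence it suffices to prove that $\id\colon(\cb,d_{g_{t_1}})\to(\cb,d_0)$ is H\"older continuous and that $\id\colon(\cb,d_0)\to(\cb,d_{g_{t_1}})$ is Lipschitz continuous.

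Now fix $p,q\in\cb$. For $i$ large we have $\vph_i(p),\vph_i(q)\in B_{g_i}(x_i,\tfrac12-2\ep)$ by~\eqref{sandwich2} (and since $\ep$ is small), so Theorem~\ref{mollifier_thm} applies with $(x,y)=(\vph_i(p),\vph_i(q))$ and $t=t_1$, giving
\[
d_{g_i}\!\big(\vph_i(p),\vph_i(q)\big)\leq\ga\Big[d_{g_i(t_1)}\!\big(\vph_i(p),\vph_i(q)\big)\Big]^{\frac1{1+4c_0}},\qquad
d_{g_i(t_1)}\!\big(\vph_i(p),\vph_i(q)\big)\leq e^{\al t_1}d_{g_i}\!\big(\vph_i(p),\vph_i(q)\big),
\]
with $\al,c_0,\ga$ and $t_1$ independent of $i$. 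The part of Theorem~\ref{mollifier_thm} guaranteeing that the relevant minimising curves are geodesics lying inside $B_{g_i(t_1)}(x_i,1-2\ep)$, combined with Remark~\ref{dist_rmk} (which confines those geodesics to a fixed $g_i(t_1)$-ball about $x_i$ of radius well below $1/2$, hence inside $\vph_i(\m)$ for $i$ large) and the convergence $\vph_i^\ast g_i(t_1)\to g_{t_1}$ in $C^\infty_{\mathrm{loc}}$, yields $d_{g_i(t_1)}(\vph_i(p),\vph_i(q))\to d_{g_{t_1}}(p,q)$; and the already-proved convergence statement of the theorem gives $d_{g_i}(\vph_i(p),\vph_i(q))\to d_0(p,q)$. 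Letting $i\to\infty$ in the two displayed inequalities gives, for all $p,q\in\cb$,
\[
d_0(p,q)\leq\ga\big[d_{g_{t_1}}(p,q)\big]^{\frac1{1+4c_0}}\qquad\text{and}\qquad d_{g_{t_1}}(p,q)\leq e^{\al t_1}d_0(p,q).
\]
The second inequality is exactly the asserted Lipschitz bound for $\id\colon(\cb,d_0)\to(\cb,d_{g_{t_1}})$; composing with the bi-Lipschitz equivalence of the previous paragraph gives the Lipschitz statement for every smooth $g$. The first, composed with $d_{g_{t_1}}\leq C\,d_g$ from that equivalence, gives $d_0(p,q)\leq\ga C^{1/(1+4c_0)}[d_g(p,q)]^{1/(1+4c_0)}$, the asserted H\"older bound, with exponent $\tfrac1{1+4c_0}\in(0,1)$ since $c_0>0$.

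The step I expect to require the most care is the bookkeeping in the third paragraph: one must verify that $\vph_i(p),\vph_i(q)$ really do land in the half-ball on which~\eqref{main_dist_est3} is valid, and that the minimising $g_i(t_1)$-geodesics between them stay inside the region where $\vph_i$ is a diffeomorphism onto its image, so that their lengths may be transported under $\vph_i^{-1}$ and controlled via the smooth convergence. This is precisely where Remark~\ref{dist_rmk} and the inclusions~\eqref{sandwich2} coming out of the construction of $\m$ are needed; the remaining ingredients---uniform equivalence of smooth metrics on compacta, and stability of Riemannian distance under $C^\infty_{\mathrm{loc}}$ convergence of metrics---are standard.
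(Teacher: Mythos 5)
Your proposal only addresses the last two sentences of the theorem (the H\"older and Lipschitz continuity of the identity map). Everything else --- the construction of $\m$ and $x_0$, the metric $d_0$, the compactness and Gromov--Hausdorff convergence, the existence of the diffeomorphisms $\vph_i$, the uniform convergence $d_{g_i}(\vph_i(\cdot),\vph_i(\cdot))\to d_0$, and the sandwich inclusion \eqref{sandwich2} --- is taken as given in your opening ``Recall that, in the construction underlying the earlier parts of the theorem, $\m$ is obtained as\dots''. That construction is precisely the main content of the theorem, and it is not trivial: one has to flow by the Mollification Theorem, extract a Cheeger--Gromov limit at a \emph{positive} time (the paper works at time $T$, not an arbitrary $t_1$, and takes $\m$ as a connected component of $\Interior(\Om_T)$ rather than the whole Cheeger--Gromov limit), use the refined Expanding Balls Lemma~\ref{EBL2} to guarantee that the domain does not collapse as $t\downto 0$, invoke Lemma~\ref{biholder_dist2} to produce the limiting distance $d_0$ and the compact inclusions~\eqref{ball_inclusions}, and run a three-step estimate ($d_0\approx d_{g(t_0)}\approx d_{g_i(t_0)}\approx d_{g_i}$) together with Lemma~\ref{dist_conv_lem} to get the uniform convergence of distances and the sandwich inclusion. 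None of this appears in your write-up, so the proposal is substantially incomplete as a proof of the stated theorem.

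For the part you do address, the argument is sound and differs mildly from the paper. You apply the distance estimate~\eqref{main_dist_est3} of the Mollification Theorem to each flow $g_i(t)$ at points $\vph_i(p),\vph_i(q)$ and then let $i\to\infty$, using the smooth convergence $\vph_i^*g_i(t_1)\to g_{t_1}$ and the already-established convergence $d_{g_i}(\vph_i\cdot,\vph_i\cdot)\to d_0$ to pass the H\"older and Lipschitz bounds to the limit; the initial reduction from a general smooth $g$ to the fixed reference metric by bi-Lipschitz equivalence on the compact set $\cb$ is standard. The paper instead derives the same estimates \emph{directly on the limit flow} via Lemma~\ref{biholder_dist2} (specifically~\eqref{specialised_main_dist_est2_repeat} and~\eqref{holder_est_repeat}), which avoids the extra limiting step and the bookkeeping about where minimising $g_i(t_1)$-geodesics live. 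Both routes work; the paper's is marginally cleaner because it decouples the H\"older/Lipschitz estimates from the convergence machinery. But the key issue remains that most of the theorem's proof is missing from your proposal.
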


\bcmt{we talk about GH convergence rather than pGH convergence above, but the pGH convergence follows from the other parts of the theorem. We need pGH convergence in the proof of Cor \ref{ACCTcomplete}.}

\bcmt{no longer ask that $g$ is complete above}

Here, and elsewhere in this paper, we assume implicitly that all manifolds are connected.

The assertion concerning the maps $\vph_i$ is not only implying the 
previous Gromov-Hausdorff convergence, even pointed  Gromov-Hausdorff convergence,
but is also
saying that we may essentially take the Gromov-Hausdorff approximations to be smooth.

\bcmt{I put the images of $\vph_i$ as $B_{g_i}(x_i,1/2)$ to be sure that the distance function in the image is all OK, though we could make it a lot smaller than that}

To clarify, when we write $(\overline{B_{g_i}(x_i,1/10)},d_{g_i})$, we are referring to the metric $d_{g_i}$ defined on the whole of
$(M_i,g_i)$, which is subsequently restricted to $\overline{B_{g_i}(x_i,1/10)}$, which 
need not agree with the infimum of lengths of connecting paths that remain within this $1/10$-ball itself, cf. Remark \ref{dist_rmk}.

The proof of this theorem, given in Section \ref{ricci_lims_sect}, relies on being able to flow the sequence $(M_i,g_i)$ locally using the Mollification theorem \ref{mollifier_thm}. In particular, the lower Ricci bounds of that theorem will be crucial, and these in turn rely on the work in \cite{ST1}.

\bcmt{There is no claim that $(\m,d_0)$ is a length space.
It should be something like a `local' length space - i.e. points in a smaller subset can be connected by a minimising geodesic, but that might go outside the smaller subset..
Indeed, the flow could be basically a flat cylinder, and $\m$ could correspond to a ball that almost wraps all the way round the cylinder..
In the corollary below the limit is a length space, but this is nothing to do with us so I leave it implicit.
}

As indicated earlier, the regular set of a Ricci limit space is the set of points at which all tangent cones are Euclidean space (here $\R^3$). There is a bigger space, the $\ep$-regular set
$\calr_\ep$ that is roughly the set of points at which the limit is within $\ep$ of being Euclidean on sufficiently small scales, in a scaled sense. In the example above with a possibly dense set of cone points, $\calr_\ep$
would include the cone points  that have sufficiently small cone angle. It is a theorem of Cheeger-Colding \cite[Theorem 5.14]{CC1} that for sufficiently small $\ep>0$, the interior of $\calr_\ep$ is bi-H\"older homeomorphic to a smooth manifold. In particular, in the  special case that 
the singular set is empty, their theorem implies bi-H\"older equivalence.

Theorem \ref{preACCT} has various 
corollaries, for example:


\cmt{changed points below to $y$ from $x$. Hopefully changed all instances, but needs checking}

\begin{cor}[Conjecture of Anderson, Cheeger, Colding, Tian, 3D case]
\label{ACCTcomplete}
Suppose that $(M^3_i,g_i)$ is a sequence of complete Riemannian manifolds such that for all $i$ we have $y_i\in M_i$,  and 
\begin{equation}
\label{ACCTcomplete_ests}
\left\{
\begin{aligned}
& \Ric_{g_i}\geq -\al_0<0 \qquad & &\text{throughout } M_i\\
& \VolB_{g_i}(y_i,1)\geq v_0>0. 
\end{aligned}
\right.
\end{equation}
Then there exist a   three-dimensional topological manifold $M$ and a   metric $d:M\times M\to [0,\infty)$ generating the same topology as $M$ and making $(M,d)$ a complete metric space,
such that 
after passing to a subsequence, we have
$$(M_i,d_{g_i},y_i)\to (M,d,y_0)$$
in the pointed Gromov-Hausdorff sense, for some $y_0\in M$.
The charts of $M$  can be taken to be bi-H\"older with respect to $d$.
\end{cor}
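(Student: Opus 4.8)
\emph{Proof idea.} The plan is to construct the limit by Gromov compactness, to use the Bishop--Gromov inequality to make the one-point volume bound usable near \emph{every} point of the limit, and then to glue the local smooth models produced by Theorem~\ref{preACCT} into an atlas. First, by Gromov precompactness (only $\Ric_{g_i}\geq-\al_0$ and completeness are used), after passing to a subsequence $(M_i,d_{g_i},y_i)\to(M,d,y_0)$ in the pointed Gromov--Hausdorff sense for some complete metric space $(M,d)$ (complete because each $(M_i,d_{g_i})$ is). Completeness of the $M_i$ makes every closed metric ball precompact, so $B_{g_i}(p,1)\subset\subset M_i$ for all $p\in M_i$. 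If $p_i\in M_i$ with $p_i\to p\in M$, then $d_{g_i}(p_i,y_i)\leq D$ for large $i$ and some $D<\infty$, so $B_{g_i}(y_i,1)\subset B_{g_i}(p_i,D+1)$, and Bishop--Gromov volume comparison upgrades $\VolB_{g_i}(y_i,1)\geq v_0$ to $\VolB_{g_i}(p_i,1)\geq v_0(D)>0$ with $v_0(D)$ depending only on $\al_0,v_0,D$. Hence Theorem~\ref{preACCT} applies to $(M_i,g_i)$ with basepoints $p_i$.

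Next I would fix a countable dense set $\{p_k\}\subset M$ and points $p_{k,i}\in M_i$ with $p_{k,i}\to p_k$. Applying Theorem~\ref{preACCT} to $(M_i,g_i,p_{k,i})$ for each $k$ and diagonalising over $k$ (each step only passes to a further subsequence, and the result still inherits the global convergence above), I obtain, for every $k$: a smooth $3$-manifold $\m_k$ with a metric $d_{0,k}$ generating its topology, a point $x_{0,k}$ with $\cb_k:=\overline{B_{d_{0,k}}(x_{0,k},1/10)}\subset\subset\m_k$, and smooth maps $\vph_{k,i}:\m_k\to M_i$ with $\vph_{k,i}(x_{0,k})=p_{k,i}$, diffeomorphic onto their images, with $d_{g_i}(\vph_{k,i}(x),\vph_{k,i}(y))\to d_{0,k}(x,y)$ uniformly on $\cb_k\times\cb_k$ and with the inclusions \eqref{sandwich2}. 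The uniform distance convergence gives equicontinuity of the $\vph_{k,i}|_{\cb_k}$, whose images are bounded, so after a further diagonalisation I may compose with Gromov--Hausdorff approximations $M_i\to M$ and pass to the limit to get maps $\psi_k:\cb_k\to M$ with $\psi_k(x_{0,k})=p_k$; passing to the limit in the distance convergence shows that each $\psi_k$ is an isometric embedding of $(\cb_k,d_{0,k})$ into $(M,d)$. Letting $\eta\downarrow0$ in \eqref{sandwich2} gives $B_d(p_k,1/10)\subset\psi_k(\cb_k)$, and since $\psi_k$ is distance-preserving and fixes the centre this forces $\psi_k\big(B_{d_{0,k}}(x_{0,k},1/10)\big)=B_d(p_k,1/10)$. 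Thus $\psi_k$ restricts to a homeomorphism of the open set $U_k:=B_{d_{0,k}}(x_{0,k},1/10)\subset\m_k$ onto the open set $B_d(p_k,1/10)\subset M$, and since $\{p_k\}$ is dense these balls cover $M$. (Arguing via the $\vph_{k,i}$ and \eqref{sandwich2}, rather than invoking uniqueness of Gromov--Hausdorff limits for the $1/10$-balls directly, conveniently handles the basepoint and avoids having to check that $1/10$ is a ``generic'' radius for the limit.)

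Finally I would assemble the atlas. Post-composing each $\psi_k^{-1}:B_d(p_k,1/10)\to U_k$ with the smooth charts of $\m_k$ produces $\R^3$-valued charts covering $M$, whose transition maps are built from the homeomorphisms $\psi_k,\psi_l^{-1}$ and smooth chart transitions, hence are themselves homeomorphisms; so $M$ is a topological $3$-manifold and its manifold topology is that of $d$. For the bi-H\"older statement, fix any smooth Riemannian metric $g$ on $\m_k$: the ``Finally'' and ``Conversely'' clauses of Theorem~\ref{preACCT} say exactly that $d_{0,k}$ and $d_g$ are bi-H\"older equivalent on $\cb_k$, while on a coordinate patch of $\m_k$ with compact closure the smooth distance $d_g$ is bi-Lipschitz to the Euclidean metric; since $\psi_k$ pulls $d$ back to $d_{0,k}$ on $U_k$, the resulting charts on $M$ are bi-H\"older with respect to $d$. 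Together with the pointed Gromov--Hausdorff convergence and completeness from the first paragraph, this gives all assertions of the corollary. I expect the real content to lie entirely in Theorem~\ref{mollifier_thm}/Theorem~\ref{preACCT}; the work above is bookkeeping --- producing a single subsequence valid simultaneously for the global limit and for all the local models --- together with the one point that needs a little care, namely verifying that each $\psi_k$ carries an \emph{open} metric ball onto an open subset of $M$, so that the $\psi_k^{-1}$ genuinely patch into a manifold atlas rather than merely a family of isometric charts.
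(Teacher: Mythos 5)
Your proposal is correct, and at its core it is the same argument as in the paper: apply Gromov compactness to get a limit $(X,d_X,y_0)$, use Bishop--Gromov to propagate the one-point volume bound to a bound at the relevant basepoints, and then invoke Theorem~\ref{preACCT} to obtain bi-H\"older local models. The difference is one of implementation. The paper works with a \emph{single} arbitrary point $x\in X$ at a time: it applies Theorem~\ref{preACCT} with basepoints $x_i\to x$, passes to a subsequence, and then identifies $(\cb,d_0)$ with $\overline{B_{d_X}(x,1/10)}$ simply by appealing to uniqueness of compact pointed Gromov--Hausdorff limits \cite[Theorem 8.1.7]{BBI}. Since ``being a topological manifold with bi-H\"older charts'' is a pointwise-local property (a chart near $x$ may come from a subsequence depending on $x$ without harm), the argument ends there. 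You instead choose a countable dense subset $\{p_k\}\subset M$, diagonalise over $k$ to get a single subsequence valid for all local models simultaneously, construct the isometric embeddings $\psi_k:\cb_k\to M$ explicitly as limits of $\vph_{k,i}$ composed with Gromov--Hausdorff approximations, verify from \eqref{sandwich2} that $\psi_k$ carries the open $1/10$-ball onto the open $d$-ball, and then assemble these into an atlas. This is all correct, but the two nontrivial steps you highlight (extracting $\psi_k$ as a limit of almost-isometric maps, and showing $\psi_k$ maps an open ball onto an open ball so that the $\psi_k^{-1}$ patch into a genuine atlas) are precisely what uniqueness of compact pointed GH limits gives you for free, so the paper's route is shorter. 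One small bookkeeping point worth noting in your version: the constant $v_0(D)$ produced by Bishop--Gromov depends on $D$, hence on $k$ through $d(p_k,y_0)$, so the H\"older exponent of your charts will in general degrade as $k\to\infty$; this is fine, since the corollary does not claim uniformity, but it should be stated.
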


For the definition of pointed Gromov-Hausdorff convergence, see
\cite[Definition 8.1.1]{BBI}.

\cmt{although they have slightly different definition for metric space, allowing infinite distances - check no issues created by this.
}

\cmt{Deleted the following comment, see also BBI Ex 8.1.4.\\
The pointed Gromov-Hausdorff convergence assertion here is equivalent to the statement that for any $r>0$, the closed balls
$\overline{B_{g_i}(y_i,r)}$ equipped with the distance metric $d_{g_i}$
Gromov-Hausdorff converge to the limit 
$\overline{B_d(y_0,r)}$. 
}

Corollary \ref{ACCTcomplete} solves the conjecture made in \cite{CC1}, Conjecture  0.7 for three-dimensional manifolds. See also, for example \cite[Remark 1.19]{CCT}
and \cite[Remark 10.23]{cheeger_book}.
A related conjecture of Anderson, see \cite[Conjecture 2.3]{AndC} and
\cite[Conjecture  0.8]{CC1}, constrains the singular set when one assumes in addition that the Ricci curvature is bounded from \emph{above}. In three dimensions, that conjecture would reduce to the singular set being empty, which would imply the bi-H\"older equivalence that we have proved.
Another situation in which the singular set can be shown to be empty in three-dimensions is when one assumes, in addition, $L^p$ control on the curvature tensor, for appropriately large $p$
\cite{CCT}.
In contrast, our work is allowing cone points in the limit, and indeed cone points that are not close to being Euclidean.

\bcmt{I leave implicit that the limit is a length space}


Of course, the existence of some Gromov-Hausdorff limit, after passing to a subsequence, is an immediate consequence of Gromov compactness.
Our results says that this limit
can be assumed to be homeomorphic to a smooth manifold. This is in contrast to the higher dimensional case, as can be seen by blowing down the Eguchi-Hanson (Ricci flat) metric, which will converge to the quotient of $\R^4$ by the map $x\mapsto -x$, which is \emph{not} homeomorphic to any manifold. The conjecture for higher dimensions is that 
the interior of the complement of the codimension 4 stratum of the singular set should be homeomorphic to a topological manifold \cite[Conjecture  0.7]{CC1}.

Note that the noncollapsedness condition in the corollary above is that the volume of \emph{one} unit ball is uniformly bounded below. 
It is  expected that it is \emph{not} possible to 
start the Ricci flow globally with such a manifold. If we weaken the claim of the corollary and assume a uniform lower bound on the volume of \emph{all} unit balls, i.e. even as we move the centre-point around, then one now \emph{can} start the Ricci flow globally (cf. Theorem \ref{global_exist_thm} or \cite{hochard}) and the result would follow from a combination of \cite{ST1} and \cite{hochard}. Raphael Hochard has informed us that a future revision of his paper \cite{hochard} will contain a complete proof of this latter result. His current preprint proceeds without establishing uniform lower Ricci bounds.
Earlier results of the first author have established this result under additional hypotheses
\cite{MilesCrelle3D}.

We will give the proof of Corollary \ref{ACCTcomplete} in Section \ref{ACCTcomplete_sect}.
The same proof will work if we merely assume that for each $r>0$, the Ricci curvature is uniformly bounded below on balls $B_{g_i}(y_i,r)$, with the bound depending on $r$ but being independent of $i$.

\bcmt{If we wanted to get rid of the $\al_0$ dependency for $c_0$ below, then we'd probably need to change the volume hypothesis to give a lower bound for the volume of balls of arbitrarily small radius. We get such a lower bound below, but it depends on $\al_0$.}

The existence part of the Mollification theorem \ref{mollifier_thm}
will be deduced by rescaling the following 
main local (in space) existence theorem. 
Some important ideas in the proof arise first in the work of Hochard 
\cite{hochard}, where related results without the uniform Ricci control were proved.

\begin{thm}[Local existence theorem in 3D]
\label{main_exist_thm2}
Suppose $s_0\geq4$.
Suppose further that $(M^3,g_0)$ is a Riemannian manifold, $x_0\in M$,  $B_{g_0}(x_0,s_0)\subset\subset M$, and 
\begin{equation}
\left\{
\begin{aligned}
& \Ric_{g_0}\geq -\al_0<0 \qquad & &\text{on } B_{g_0}(x_0,s_0)\\
& \VolB_{g_0}(x,1)\geq v_0>0 \qquad & &\text{for all }x\in B_{g_0}(x_0,s_0-1).
\end{aligned}
\right.
\end{equation}
Then there exist $T,\al,c_0>0$ depending only on $\al_0$ and $v_0$,
and a Ricci flow $g(t)$ defined for $t\in [0,T]$ on $B_{g_0}(x_0,s_0-2)$, with
$g(0)=g_0$ where defined, such that
\begin{equation}
\left\{
\begin{aligned}
& \Ric_{g(t)}\geq -\al \qquad & &\text{on } B_{g_0}(x_0,s_0-2)\\
& |\Rm|_{g(t)}\leq c_0/t \qquad & &\text{on } B_{g_0}(x_0,s_0-2)
\end{aligned}
\right.
\end{equation}
for all $t\in (0,T]$.
\end{thm}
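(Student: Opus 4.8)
The plan is to split Theorem \ref{main_exist_thm2} into two parts: first construct a Ricci flow on the smaller ball satisfying the curvature bound $|\Rm|_{g(t)}\leq c_0/t$, following the localized existence scheme of Hochard \cite{hochard}; then upgrade this to the uniform lower Ricci bound $\Ric_{g(t)}\geq -\al$ by feeding the flow into the localized maximum principle of \cite{ST1}. Before either step, one records that the two hypotheses combine, via Bishop--Gromov volume comparison, into a single scale-invariant \emph{non-collapsing} statement: $\VolB_{g_0}(x,r)\geq v_1 r^3$ for every $x\in B_{g_0}(x_0,s_0-1)$ and every $r\in(0,1]$, with $v_1=v_1(\al_0,v_0)>0$. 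This, together with $\Ric_{g_0}\geq -\al_0$, is the precise input that the existence argument consumes.

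For the existence part, the core is an interior a priori curvature estimate of the type established by Hochard (and, in related forms, by the present authors), showing that any 3D Ricci flow starting from such non-collapsed, lower-Ricci-bounded data must satisfy $|\Rm|_{g(t)}\leq c_0/t$ on slightly interior balls \emph{for as long as it exists with bounded curvature}, with $c_0=c_0(\al_0,v_0)$. The flow itself is then produced by a continuity-in-time argument: the set of $T'$ for which a solution with this curvature bound exists on a nested family of slightly shrinking balls is shown to be both open and closed. The incompleteness, together with the fact that the shrinking balls stay $\subset\subset B_{g_0}(x_0,s_0-1)$ where everything is controlled, is handled using the distance-distortion estimates for flows with $|\Rm|\leq c_0/t$ from \cite{ST1} (of the type appearing in \eqref{main_dist_est3}), which keep $d_{g(t)}$ within $O(\sqrt{t})$ of $d_{g_0}$; if needed one iterates the construction over finitely many shells (a ``pyramid'', as in \cite{hochard}), patching the local flows on their overlaps by uniqueness of bounded-curvature Ricci flows. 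The output is a Ricci flow $g(t)$, $t\in[0,T]$, on a ball slightly smaller than $B_{g_0}(x_0,s_0-1)$, equal to $g_0$ at $t=0$ where defined, with $|\Rm|_{g(t)}\leq c_0/t$, and with $T,c_0$ depending only on $\al_0,v_0$ — crucially not on $s_0$, since each step costs only an $s_0$-independent amount of radius.

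For the Ricci part, the flow just constructed satisfies the hypotheses of the localized lower-Ricci-bound theorem of \cite{ST1} on its domain: $|\Rm|_{g(t)}\leq c_0/t$, $\Ric_{g_0}\geq -\al_0$ on the larger ball $B_{g_0}(x_0,s_0)$, and the non-collapsing above, with a buffer of order one to spare. That result yields $\Ric_{g(t)}\geq -\al$ on $B_{g_0}(x_0,s_0-2)$ with $\al$ depending only on $\al_0,v_0$ (and on $c_0$, hence only on $\al_0,v_0$); the same circle of ideas in \cite{ST1} additionally propagates the volume lower bound, which is not needed here but will be used in the Mollification theorem \ref{mollifier_thm}. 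Restricting $g(t)$ to $B_{g_0}(x_0,s_0-2)$ gives the flow asserted by Theorem \ref{main_exist_thm2}.

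The main obstacle is the existence part, and specifically making Hochard's localized construction deliver the curvature bound with an $s_0$-independent $c_0$ and an $s_0$-independent existence time: the a priori estimate is a delicate double bootstrap that exploits the special features of three-dimensional Ricci flow, and running it on an incomplete ball forces one to control the domain of definition and to close up the flow in time using only the weak $c_0/t$ curvature decay rather than uniform bounds. A secondary point is to verify that the fixed-width buffers built into the hypotheses of \cite{ST1} are respected at the scale on which it is invoked, being careful about which notion of distance is used on the incomplete ball (cf. Remark \ref{dist_rmk}). Once these are in place, the remaining ingredients — Bishop--Gromov comparison and the standard patching/uniqueness theory for Ricci flow — are routine.
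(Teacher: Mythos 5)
Your high-level decomposition --- normalize the non-collapsing via Bishop--Gromov, construct a flow with $|\Rm|_{g(t)}\leq c_0/t$ \`a la Hochard, then feed the flow into the Ricci-lower-bound machinery of \cite{ST1} --- is a legitimate alternative route, and the paper itself acknowledges something like it (see the remark immediately after the Extension Lemma \ref{ExtensionLemma2}, where the authors note that the Ricci hypothesis (b) could be dropped and re-derived internally via the Double Bootstrap). It differs from the paper's proof in one organizational respect: the paper carries the lower Ricci bound through the iteration as a hypothesis, and uses it at each step when invoking the Local Lemma \ref{modifiedlocal2}, whereas you propose to run the existence iteration first and upgrade the Ricci bound afterwards. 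The paper's way keeps the argument self-contained in \cite{ST1}'s lemmas; your way would lean directly on Hochard's version of the curvature estimate, which has a different set of intermediate hypotheses.

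There is, however, a genuine gap in your description of the existence step, which is where essentially all the content of the theorem lives. The picture of ``a continuity-in-time argument: the set of $T'$ for which a solution with this curvature bound exists on a nested family of shrinking balls is open and closed'' is not how the construction works and does not by itself deliver a flow. A priori curvature estimates of the form $|\Rm|\leq c_0/t$ tell you nothing about whether a flow on an incomplete ball can be continued past any given time; the flow simply stops being defined near the boundary, and no amount of distance-distortion control repairs this. The central device --- entirely absent from your sketch --- is the conformal cutoff of Hochard (Lemma \ref{HochardsLemma2}): at the current final time one first establishes, via the Local Lemma, both a \emph{uniform} curvature bound $C_0/\ell$ and an injectivity radius lower bound $\sqrt{\ell/C_0}$ on an interior region; one then conformally blows up the metric near the boundary of that region to produce a \emph{complete} bounded-curvature metric that agrees with the flow on a slightly smaller ball; Shi's complete flow is restarted from this metric and, by the doubling-time estimate, lives for a time $\sim \ell/c_0$ with the curvature bound $c_0/t$ intact; the radius lost per step is $\sim\sqrt{\ell}$, so the geometric series sums. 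That is the iteration in the Extension Lemma \ref{ExtensionLemma2}, and it is a discrete dyadic scheme, not an open-closed connectedness argument. Your mention of ``patching local flows on their overlaps by uniqueness'' is also not what happens: the shrinking balls are nested, and each extension is a genuine continuation of the previous flow, so no gluing or uniqueness theorem is invoked. Without identifying the conformal cutoff as the mechanism, the proposal has no way to convert an a priori curvature estimate into an actual existence time, which is the whole problem.
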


The curvature upper and lower bounds in this theorem will give us good control on the distance function, via  a  refinement of \cite[Lemma 3.4]{ST1}, valid in any dimension, that is given in
Lemma \ref{biholder_dist2} below.
In particular, although $g(t)$ is not assumed to be complete, this control will guarantee that the Ricci flow is `big enough'.

Theorem \ref{main_exist_thm2} is stated in a way that allows us to apply it on larger and larger balls within a \emph{complete} Riemannian manifold $(M^3,g_0)$, to give local Ricci flows with enough regularity to give compactness. That is, a subsequence will converge to a Ricci flow starting with $(M^3,g_0)$ that enjoys the same geometric control as the initial data. The existence of a Ricci flow without the uniform control on Ricci curvature and volume, and without the control on how the distance function can increase,  was first proved by Hochard \cite{hochard}, and the following result could be proved indirectly by combining the results of
\cite{hochard} and \cite{ST1}.

\begin{thm}[Global existence theorem in 3D]
\label{global_exist_thm}
Suppose that $(M^3,g_0)$ is a complete Riemannian manifold with the properties that
\begin{equation}
\left\{
\begin{aligned}
& \Ric_{g_0}\geq -\al_0<0 & &\\
& \VolB_{g_0}(x,1)\geq v_0>0 \qquad & &\text{for all }x\in M.
\end{aligned}
\right.
\end{equation}
Then there exist $T,v,\al,c_0>0$ depending only on $\al_0$ and $v_0$,
and a smooth, complete Ricci flow $g(t)$ defined for $t\in [0,T]$ on $M$, with
$g(0)=g_0$, such that
\begin{equation}
\label{global_conc_ests}
\left\{
\begin{aligned}
& \Ric_{g(t)}\geq -\al & &\\
& \VolB_{g(t)}(x,1)\geq v>0 \qquad & &\text{for all }x\in M\\
& |\Rm|_{g(t)}\leq c_0/t \qquad & &\text{throughout } M
\end{aligned}
\right.
\end{equation}
for all $t\in (0,T]$. Moreover, 
for any $0\leq t_1\leq t_2\leq T$, and any $x,y\in M$, we have
\beq
\label{global_dist_est}
d_{g(t_1)}(x,y)-\be\sqrt{c_0}(\sqrt{t_2}-\sqrt{t_1})
\leq d_{g(t_2)}(x,y)
\leq e^{\al(t_2-t_1)}d_{g(t_1)}(x,y),
\eeq
where $\be=\be(n)$.
\end{thm}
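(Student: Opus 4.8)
The plan is to obtain the global flow $g(t)$ on $M$ as a Cheeger--Gromov limit of the local Ricci flows provided by Theorem~\ref{main_exist_thm2} on an exhaustion of $M$ by balls, and then to read off the stated estimates. Fix any $x_0\in M$ (it will serve as the eventual basepoint). Since $(M,g_0)$ is complete, for every integer $j\geq 4$ the ball $B_{g_0}(x_0,j)$ is relatively compact in $M$, so Theorem~\ref{main_exist_thm2} applied with $s_0=j$ yields a Ricci flow $g_j(t)$, $t\in[0,T]$, on $B_{g_0}(x_0,j-2)$ with $g_j(0)=g_0$, satisfying $\Ric_{g_j(t)}\geq-\al$ and $|\Rm|_{g_j(t)}\leq c_0/t$ there, where $T,\al,c_0$ depend only on $\al_0$ and $v_0$ and, crucially, not on $j$. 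After shrinking $T$ once so that $\be\sqrt{c_0 T}$ is small, the distance-distortion control of Lemma~\ref{biholder_dist2} shows that for $t\in[0,T]$ the time-$t$ ball $B_{g_j(t)}(x_0,j-3)$ still lies relatively compactly inside the domain $B_{g_0}(x_0,j-2)$ on which $g_j(t)$ is defined; this is the precise sense in which the flows are ``big enough'' near $x_0$, and it permits interior arguments without interference from the artificial boundary of the domain.

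Next I would assemble the uniform estimates needed for a compactness argument. Shi's interior derivative estimates, applied to $|\Rm|_{g_j(t)}\leq c_0/t$ on the uniformly large parabolic regions just identified, give $|\grad^k\Rm|_{g_j(t)}\leq C_k/t^{1+k/2}$ for all $k$, with $C_k$ independent of $j$. A uniform lower bound on the injectivity radius near $x_0$ then follows from the curvature bound together with a lower bound of the form $\VolB_{g_j(t)}(x,\rho)\geq v\rho^3$ for a definite $\rho>0$ and definite $t$; the latter is exactly the kind of conclusion that \cite{ST1} extracts from $\Ric_{g_0}\geq-\al_0$, $\VolB_{g_0}(\cdot,1)\geq v_0$ and $|\Rm|\leq c_0/t$, and this is the step where \cite{ST1} is indispensable. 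With curvature, all its covariant derivatives, and the injectivity radius bounded uniformly on compact space-time subsets away from $t=0$, and with $g_j(0)\equiv g_0$, the Cheeger--Gromov--Hamilton compactness theorem for Ricci flows on exhausting open sets produces, after passing to a subsequence, a smooth limit Ricci flow $g(t)$, $t\in[0,T]$, on a limit manifold; since each $g_j$ agrees with $g_0$ on its domain at $t=0$ and $M$ is connected and complete, that limit manifold is $M$ itself and $g(0)=g_0$. Completeness of $g(t)$ for each $t\in(0,T]$ is preserved because $g(t)$ has bounded curvature on every $[\tau,T]$, $\tau>0$, and distances do not collapse, by the lower distance estimate below.

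It remains to verify \eqref{global_conc_ests} and \eqref{global_dist_est} for $g(t)$. The pointwise bounds $\Ric_{g(t)}\geq-\al$ and $|\Rm|_{g(t)}\leq c_0/t$ pass to the limit from the $g_j$ by smooth convergence, and the volume bound $\VolB_{g(t)}(x,1)\geq v>0$, with $v$ depending only on $\al_0$ and $v_0$, is obtained by applying the estimates of \cite{ST1} to the complete limit flow, using $\Ric_{g_0}\geq-\al_0$, $\VolB_{g_0}(\cdot,1)\geq v_0$ and $|\Rm|_{g(t)}\leq c_0/t$. For \eqref{global_dist_est}, fix $0<t_1\leq t_2\leq T$: integrating $\partial_t g=-2\Ric\leq 2\al g$ along a $g(t_1)$-minimising geodesic between $x$ and $y$ (which exists by completeness) gives the upper bound $d_{g(t_2)}(x,y)\leq e^{\al(t_2-t_1)}d_{g(t_1)}(x,y)$, while the lower bound $d_{g(t_1)}(x,y)-\be\sqrt{c_0}(\sqrt{t_2}-\sqrt{t_1})\leq d_{g(t_2)}(x,y)$ is precisely Lemma~\ref{biholder_dist2} applied with $|\Rm|_{g(t)}\leq c_0/t$ on $[t_1,t_2]$ (the $\sqrt{t}$ rate being the scaling-optimal outcome of the classical ``changing distances'' argument with comparison radius of order $\sqrt{t}$). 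Letting $t_1\downarrow0$ and using that $d_{g(t_1)}\to d_{g_0}$ locally uniformly (itself a consequence of the uniform distance estimates) then gives \eqref{global_dist_est} including the endpoint $t_1=0$.

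The main obstacle is the uniform volume lower bound at positive times, equivalently the non-collapsing of the evolving metric: this is not visible from the Ricci and full-curvature bounds alone and rests on the three-dimensional analysis of \cite{ST1}. A secondary, more technical, point is executing the compactness step cleanly for flows that live only on an exhausting family of (incomplete) domains — one must first use the distance-distortion estimates to keep the relevant balls relatively compact in those domains before Shi's estimates and the compactness theorem can be applied. One could alternatively bypass the exhaustion argument and deduce the theorem by combining the existence result of \cite{hochard} with the a priori estimates of \cite{ST1}, as remarked above.
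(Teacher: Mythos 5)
Your overall strategy (exhaust $M$ by balls, apply Theorem~\ref{main_exist_thm2}, then pass to a limit) is the same as the paper's, but the paper handles the compactness step in a materially different and cleaner way, and there is a real gap in your version of it.

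The gap: with only $|\Rm|_{g_j(t)}\leq c_0/t$ as input, Cheeger--Gromov--Hamilton compactness gives a smooth limit flow only on $(0,T]$, not on $[0,T]$. Moreover, that theorem produces convergence after pulling back by a sequence of diffeomorphisms $\phi_j$; the fact that $g_j(0)\equiv g_0$ on its domain does not by itself force the limit manifold to be $M$ nor the limit initial metric to be $g_0$, because the $\phi_j$ need not converge to the identity. Identifying the $t\downarrow 0$ data of the limit flow with the original metric space is precisely the delicate issue the paper spends all of Section~\ref{ricci_lims_sect} on (there the limit is only bi-H\"older to the starting data), and your two-sentence dismissal of it does not go through. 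You have also placed the emphasis in the wrong spot: the volume bound at positive times, which you call ``the main obstacle,'' is a routine application of \cite[Lemma 2.3]{ST1}; the genuine obstacle is uniformity of the curvature estimate down to $t=0$.

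What the paper does instead: it invokes Chen's a priori estimate (Lemma~\ref{Rm_ext_lem}) to upgrade the $c_0/t$ decay to a uniform-in-time bound $|\Rm|_{g_k(t)}\leq K_0$ on each fixed ball $B_{g_0}(x_0,r_0+1)$ for all $t\in[0,T]$, with $K_0$ independent of $k$ (it does depend on $\sup_{B_{g_0}(x_0,r_0+2)}|\Rm|_{g_0}$, which is finite and fixed once $r_0$ is). It then applies Shi's time-zero derivative estimate (Lemma~\ref{Shi_time0}) to bound all $|\grad^\ell\Rm|_{g_k(t)}$ uniformly in $k$ and $t\in[0,T]$ on slightly smaller balls. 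Since all the $g_k$ literally agree with $g_0$ at $t=0$ and now have uniformly bounded derivatives in fixed coordinate charts, one can pass to a limit by Ascoli--Arzel\`a in those charts; no Cheeger--Gromov--Hamilton machinery, no diffeomorphisms, no injectivity radius estimate, and no ambiguity about the limit manifold or the $t=0$ data. A diagonal argument over $r_0\to\infty$ and the Shrinking Balls Lemma for completeness finish the construction. This use of Chen's estimate to regain control at $t=0$ is the key idea your proposal is missing; without it your compactness step does not establish $g(0)=g_0$ on $M$.
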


We prove Theorem \ref{global_exist_thm} in Section \ref{global_sect}.

Finally, as a variation of the ideas above, we record that it is possible to run the Ricci flow starting with appropriate rough data.

\begin{thm}[Ricci flow from a uniformly noncollapsed 3D Ricci limit space]
\label{RFfromRLS}
Suppose that $(M^3_i,g_i)$ is a sequence of complete Riemannian manifolds such that for all $i$ we have $x_i\in M_i$,  and 
\begin{equation}
\left\{
\begin{aligned}
& \Ric_{g_i}\geq -\al_0<0 \qquad & &\text{throughout } M_i\\
& \VolB_{g_i}(x,1)\geq v_0>0 \qquad & &\text{ for all }x\in M_i. 
\end{aligned}
\right.
\end{equation}
Then there exists a smooth manifold $M$, a point $x_\infty\in M$, a complete Ricci flow $g(t)$ on $M$ for $t\in (0,T]$, where $T>0$ depends only on $\al_0$ and $v_0$, 
and a continuous  distance metric $d_0$ on $M$ such that
$d_{g(t)}\to d_0$ locally uniformly as $t\downto 0$, 
and after 
passing to a subsequence in $i$ we have that
$(M_i,d_{g_i},x_i)$ converges in the pointed Gromov-Hausdorff sense to 
$(M,d_0,x_\infty)$.
Furthermore, the Ricci flow satisfies the estimates
\eqref{global_conc_ests} and \eqref{global_dist_est}
for some $\al,v,c_0>0$ depending only on $\al_0$ and $v_0$.
\end{thm}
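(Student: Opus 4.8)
The strategy is to combine the Global existence theorem \ref{global_exist_thm} applied to each $(M_i,g_i)$ with a Cheeger--Gromov-type compactness argument for the resulting sequence of Ricci flows, using the uniform curvature decay $|\Rm|_{g_i(t)}\leq c_0/t$ together with the derivative estimates of Shi. First I would invoke Theorem \ref{global_exist_thm} to obtain, for each $i$, a smooth complete Ricci flow $g_i(t)$ on $M_i$ for $t\in[0,T]$ (with $T,v,\al,c_0$ independent of $i$) satisfying \eqref{global_conc_ests} and \eqref{global_dist_est}, with $g_i(0)=g_i$. Fix any small $t_*\in(0,T]$ and use it as a new basepoint time: on $[t_*,T]$ the flows $g_i(t)$ have uniformly bounded curvature $|\Rm|\leq c_0/t_*$, and the noncollapsing $\VolB_{g_i(t)}(x,1)\geq v$ holds uniformly, so by Hamilton's compactness theorem for Ricci flows a subsequence of $(M_i,g_i(t),x_i)$ converges in the pointed smooth Cheeger--Gromov sense on $[t_*,T]$ to a complete Ricci flow $(M,g(t),x_\infty)$ on that interval. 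Running this over a sequence $t_*\downto 0$ and diagonalising yields a complete limit Ricci flow $g(t)$ on $M$ for $t\in(0,T]$, which inherits the pointwise estimates \eqref{global_conc_ests} and \eqref{global_dist_est} for $t>0$ by passing to the limit in each of the (scale-invariant, or time-separated) inequalities.

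Next I would construct the limiting distance metric $d_0$ and establish that it is the Gromov--Hausdorff limit of the $(M_i,d_{g_i},x_i)$. The key tool is the one-sided distance distortion estimate \eqref{global_dist_est}: for $0<t_1\leq t_2$ the metrics $d_{g_i(t)}$ form, on each fixed ball, a family that is monotone up to the controlled error $\be\sqrt{c_0}(\sqrt{t_2}-\sqrt{t_1})$, hence uniformly Cauchy as $t\downto 0$. Thus $d_{g(t)}$ converges locally uniformly as $t\downto 0$ to a continuous distance metric $d_0$ on $M$ (that it separates points follows from the lower bound in \eqref{global_dist_est}, which controls how much distances can shrink as $t$ decreases to $0$). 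For the Gromov--Hausdorff statement, fix $r>0$ and $\de>0$: choose $t_*>0$ small so that $d_{g(t_*)}$ is $\de$-close to $d_0$ on $\overline{B}(x_\infty,r)$ and, uniformly in $i$, $d_{g_i(t_*)}$ is $\de$-close to $d_{g_i}=d_{g_i(0)}$ on $\overline{B_{g_i}(x_i,r)}$ (again by \eqref{global_dist_est}); then use the smooth convergence $(M_i,g_i(t_*),x_i)\to(M,g(t_*),x_\infty)$ at the fixed positive time $t_*$ to get $\de$-GH-closeness of the time-$t_*$ metrics on the relevant balls. A triangle-inequality chaining of these three approximations, followed by letting $\de\downto 0$, gives $(M_i,d_{g_i},x_i)\to(M,d_0,x_\infty)$ in the pointed Gromov--Hausdorff sense. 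Some care is needed here with the distinction, noted in Remark \ref{dist_rmk}, between intrinsic ball-distances and the ambient distance, but since the manifolds are now complete this is routine.

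The main obstacle I expect is \emph{not} the compactness or the limit construction per se, but rather keeping all the estimates genuinely uniform in $i$ down to $t=0$ and ensuring the distance metric $d_0$ is honestly a metric generating a reasonable topology on a \emph{fixed} manifold $M$ --- in particular, verifying that the smooth Cheeger--Gromov limit manifold $M$ does not depend on the auxiliary time $t_*$ (handled by the diagonal argument, but one must check the identifications are compatible) and that the completeness of $(M,d_0)$ and of each $g(t)$ is preserved in the limit. The lower distance bound in \eqref{global_dist_est} is what prevents $d_0$ from degenerating, so the argument hinges on propagating exactly that estimate through the compactness. Everything else --- Shi's derivative estimates to upgrade $C^0$ to $C^\infty_{\mathrm{loc}}$ convergence, and passing the Ricci and volume lower bounds to the limit --- is standard. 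Finally, the estimates \eqref{global_conc_ests} and \eqref{global_dist_est} for the limit flow follow immediately since they are closed conditions under the convergence, with the same constants $\al,v,c_0$.
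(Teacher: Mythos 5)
Your proposal takes essentially the same route as the paper: apply Theorem \ref{global_exist_thm} to each $(M_i,g_i)$, use Hamilton--Cheeger--Gromov compactness of the flows for $t>0$ (your explicit diagonalisation over $t_*\downto 0$ is just the standard bookkeeping inside that compactness theorem), pass the estimates \eqref{global_conc_ests} and \eqref{global_dist_est} to the limit, use the lower distance bound to obtain $d_0$ as a locally uniform limit of $d_{g(t)}$, and chain the three distance approximations (at $t=0$ vs.\ $t=t_*$ in both the sequence and the limit, plus smooth convergence at $t_*$) to get the pointed Gromov--Hausdorff convergence. This is correct and matches the paper's argument.
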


\bcmt{currently not including the biHolder distance estimates in these final theorems. You can always get them with hindsight}

We give the proof of Theorem \ref{RFfromRLS}
in Section \ref{RF_from_RLS_sect}.

\emph{Recent progress:}
Since this paper was released on arXiv, there have been several refinements and applications of our work made.
In \cite{LeeTam}, Lee and Tam use the ideas in this work to address a special case of Yau's uniformisation conjecture. 
In \cite{MT1}, our work is refined to give a global version of Corollary \ref{ACCTcomplete} using pyramid Ricci flows.
In \cite{lai}, Lai demonstrates that our proof works in higher dimensions if an appropriate substitution for the Double Bootstrap Lemma \ref{modifieddouble2} is made, based on the estimates of Bamler, Cabezas-Rivas and Wilking \cite{B_CR_W}. These latter estimates have since been localised by Hochard \cite{hochard_thesis}, giving a higher-dimensional version of our Double Bootstrap Lemma
\ref{modifieddouble2} for suitably stronger curvature hypotheses. This subsequently allows a global higher-dimensional result to be proved \cite{MT2}. 
Lai's work \cite{lai} also incorporates the bi-H\"older estimates developed in Lemma 3.1 of this paper.
Finally, it has been pointed out to us by De Philippis, Mondino and Gigli separately that our work settles the question of whether 
noncollapsed Ricci limit spaces 
coincide with the corresponding RCD spaces -- see \cite[\S 3.3, Remark 4]{cetraro} for a discussion.

\emph{Acknowledgements:} 
The first author was supported by the Priority Program `Geometry at Infinity' (SPP 2026) of the German Research Foundation (DFG).
The second author was 
supported by EPSRC grant number EP/K00865X/1.


\section{Expanding and Shrinking Balls lemmata}

\bcmt{could add a `H\"older' balls lemma, but we don't need it}

In this section we record the two main local ball inclusion lemmas from \cite{ST1}.
In fact, the first is a refinement of the  result from \cite{ST1}, which we will need in 
Section \ref{ricci_lims_sect}.

\begin{lemma}[{The expanding balls lemma, cf. \cite[Lemma 3.1]{ST1}}]
\label{EBL2}
Suppose $(M,g(t))$ is a Ricci flow for $t\in [-T,0]$, $T>0$, on a manifold $M$ of any dimension.
Suppose that $x_0\in M$ and that $B_{g(0)}(x_0,R)\subset\subset M$
and $\Ric_{g(t)}\geq -K<0$ on 
$B_{g(0)}(x_0,R)\intersect B_{g(t)}(x_0,Re^{Kt})\subset B_{g(t)}(x_0,R)$
for each $t\in [-T,0]$.
Then
\beq
\label{claimlinenewEBL}
B_{g(0)}(x_0,R)\supset B_{g(t)}(x_0,Re^{Kt})
\eeq
for all $t\in [-T,0]$.
\end{lemma}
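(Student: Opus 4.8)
## Proof Proposal for the Expanding Balls Lemma

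**The plan** is to track the evolution of the distance function $t\mapsto d_{g(t)}(x_0,y)$ for a fixed point $y$, and to show that as we run time backwards from $0$ to $-T$, a point that starts inside the ball $B_{g(t)}(x_0,Re^{Kt})$ cannot escape from $B_{g(0)}(x_0,R)$. Equivalently, I would argue by contradiction: suppose the inclusion \eqref{claimlinenewEBL} fails at some time, and let $\bar t\in[-T,0)$ be the supremum of times $t$ at which it fails (it holds at $t=0$ trivially since $Re^{0}=R$). The strategy is to derive a differential inequality $\frac{d}{dt}d_{g(t)}(x_0,y)\geq -K\,d_{g(t)}(x_0,y)$ valid as long as the relevant geodesic stays inside the region where $\Ric_{g(t)}\geq -K$, and then integrate it.

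**First**, I would make precise the setup near the boundary. Since $B_{g(0)}(x_0,R)\subset\subset M$, the closed ball is compact, and for $t$ close to $0$ the ball $B_{g(t)}(x_0,Re^{Kt})$ is comfortably contained in $B_{g(0)}(x_0,R)$, so there is a genuine interval on which \eqref{claimlinenewEBL} holds. **Next**, the key computation: for a point $y$ with $d_{g(t)}(x_0,y)<Re^{Kt}$, any minimising $g(t)$-geodesic $\sigma$ from $x_0$ to $y$ has length $<Re^{Kt}\leq R$, hence lies in $B_{g(t)}(x_0,R)$; combined with the hypothesis, $\Ric_{g(t)}\geq -K$ holds along $\sigma$. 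The standard first-variation/Hamilton-type estimate then gives, in the barrier or forward-difference-quotient sense,
\[
\frac{d}{dt}d_{g(t)}(x_0,y)=-\int_\sigma \Ric_{g(t)}(\sigma',\sigma')\,\leq K\,d_{g(t)}(x_0,y),
\]
so running time backwards (so $dt<0$) the distance shrinks at a controlled rate; precisely $\frac{d}{dt}\big(e^{Kt}\big)^{-1}d_{g(t)}(x_0,y)\leq 0$ is not quite it — rather one shows $t\mapsto e^{Kt}$ is exactly the borderline rate, so that $d_{g(t)}(x_0,y)\leq R e^{Kt}$ propagates. I would phrase this carefully using upper barriers for the (only Lipschitz) distance function, exactly as in the proof of \cite[Lemma 3.1]{ST1}.

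**The main obstacle**, and the reason this is stated as a refinement of \cite[Lemma 3.1]{ST1}, is the delicate interplay of the three nested balls in the hypothesis: the Ricci bound is only assumed on $B_{g(0)}(x_0,R)\cap B_{g(t)}(x_0,Re^{Kt})$, not on all of $B_{g(0)}(x_0,R)$. So I must ensure that the minimising geodesics appearing in the variation formula never leave this intersection, and I must also confirm the containment $B_{g(t)}(x_0,Re^{Kt})\subset B_{g(0)}(x_0,R)$ asserted inside the hypothesis is consistent — this is a bootstrap: the conclusion we want to prove is essentially being fed back in as a hypothesis on the region where curvature is controlled. The clean way to handle this is a continuity/connectedness argument on the set $\{t\in[-T,0]: B_{g(0)}(x_0,R)\supset B_{g(s)}(x_0,Re^{Ks})\text{ for all }s\in[t,0]\}$: show it is nonempty, relatively open (using that the inclusion is strict once it holds, by the strict inequality coming from integrating a strict-if-$y\neq x_0$ differential inequality, together with compactness), and relatively closed; hence it is all of $[-T,0]$. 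I'd also need to handle the trivial but necessary point that $y=x_0$ gives $d\equiv 0$, and that incompleteness of $M$ causes no issue precisely because $B_{g(0)}(x_0,R)\subset\subset M$ keeps everything in a compact set where geodesics behave.
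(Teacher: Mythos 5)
Your overall approach---the first variation formula for length under Ricci flow, together with a contradiction at the first time the ball inclusion fails---is the same one the paper takes, and the key inequality is correctly stated the second time you write it, namely $\frac{d}{dt}d_{g(t)}(x_0,y)\leq K\,d_{g(t)}(x_0,y)$ (the first version, with $\geq -K d$, is a sign slip). The genuine gap is in the open-and-closed step. You claim the set
$S=\{t\in[-T,0]:\,B_{g(0)}(x_0,R)\supset B_{g(s)}(x_0,Re^{Ks})\ \forall\,s\in[t,0]\}$
is relatively open ``using that the inclusion is strict once it holds, by the strict inequality coming from integrating a strict-if-$y\neq x_0$ differential inequality.'' But the differential inequality is not strict: if $\Ric\equiv -K$ along the minimising geodesic $\gamma$, then $\frac{d}{dt}L_{g(t)}(\gamma)=K L_{g(t)}(\gamma)$ exactly, and integrating gives only the non-strict bound $d_{g(0)}(x_0,y)\leq d_{g(t)}(x_0,y)\,e^{-Kt}$. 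In particular a point with $d_{g(t)}(x_0,y)$ close to $Re^{Kt}$ can end up with $d_{g(0)}(x_0,y)$ equally close to $R$; there is no built-in margin, your compactness step produces no open neighbourhood in $S$, and likewise the closedness claim only yields an inclusion with possible boundary contact. The hypothesis $\Ric\geq -K$ makes the extremal case $\Ric\equiv -K$ admissible, so this borderline behaviour cannot be ruled out by the differential inequality alone.

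The device the paper uses to manufacture the needed slack is to prove, for each fixed $\alpha\in(0,1)$, the strengthened inclusion $B_{g(0)}(x_0,R)\supset\overline{B_{g(t)}(x_0,R\alpha e^{Kt/\alpha})}$, and only at the end let $\alpha\to 1$. At the supremum failure time $t_0$ one finds $y$ with $d_{g(0)}(x_0,y)=R$ and a $g(t_0)$-minimising geodesic $\gamma$ to $y$, and then compares the one-sided $t$-derivative of the \emph{smooth} quantity $L_{g(t)}(\gamma)$ at $t_0$: the failure structure forces $\frac{d}{dt}\big|_{t_0}L_{g(t)}(\gamma)\geq RKe^{Kt_0/\alpha}\geq \tfrac{1}{\alpha}K L_{g(t_0)}(\gamma)$, while the variation formula gives $\leq K L_{g(t_0)}(\gamma)$; the factor $\tfrac{1}{\alpha}>1$ is exactly the wedge that produces the contradiction and is absent if you work with $\alpha=1$ directly. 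Working with $L_{g(t)}(\gamma)$ for a fixed $\gamma$, rather than $d_{g(t)}(x_0,y)$, also sidesteps the Lipschitz/barrier technicalities you gesture at. Without introducing an explicit gap of this kind, your open-and-closed argument does not close.
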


\begin{proof}
It suffices to show that for $\al\in (0,1)$ arbitrarily close to $1$
we have
\beq
\label{alpha_claim}
B_{g(0)}(x_0,R)\supset \overline{B_{g(t)}(x_0,R\al e^{\frac{Kt}{\al}})}
\text{ for all }t\in [-T,0]. 
\eeq
This assertion is clearly true for $t=0$, and by smoothness, also for $t<0$ sufficiently close to $0$ (depending on $\al$ amongst other things).
If \eqref{alpha_claim} were not true, then we could let $t_0\in [-T,0]$ be the supremum of the times at which the inclusion fails, and by smoothness it will fail also at time $t_0$.
Thus we can find $y\in \overline{B_{g(t_0)}(x_0,R\al e^{\frac{Kt_0}{\al}})}$ such that $d_{g(0)}(x_0,y)=R$.
Pick a minimising unit speed geodesic $\ga$, with respect to $g(t_0)$, connecting $x_0$ and $y$ that lies within both
$\overline{B_{g(t_0)}(x_0,R\al e^{\frac{Kt_0}{\al}})}
\subset B_{g(t_0)}(x_0,R e^{Kt_0})$
and 
$\overline{B_{g(0)}(x_0,R)}$
where $\Ric_{g(t_0)}\geq -K$.
We have 
\beq
\label{t0length}
L_{g(t_0)}(\ga)\leq R\al e^{\frac{Kt_0}{\al}},
\eeq
and by the flow equation
\beq
\label{length_deriv}
\frac{d}{dt}\bigg|_{t=t_0}L_{g(t)}(\ga)
=-\int_\ga \Ric(\dot\ga,\dot\ga)\leq K L_{g(t_0)}(\ga).
\eeq
Because $t_0$ is the supremum time at which the inclusion of
\eqref{alpha_claim} fails, for $t\in (t_0,0]$ we have
$L_{g(t)}(\ga)> R\al e^{\frac{Kt}{\al}}$, and subtracting \eqref{t0length} gives
$$L_{g(t)}(\ga)-L_{g(t_0)}(\ga)
> R\al (e^{\frac{Kt}{\al}}-e^{\frac{Kt_0}{\al}}).$$
Dividing by $t-t_0>0$ and taking a limit $t\downto t_0$ gives
$$\frac{d}{dt}\bigg|_{t=t_0}L_{g(t)}(\ga)
\geq R Ke^{\frac{Kt_0}{\al}}\geq \frac{1}{\al}K L_{g(t_0)}(\ga),$$
which contradicts \eqref{length_deriv}.
\end{proof}

\begin{lemma}[{The shrinking balls lemma, \cite[Corollary 3.3]{ST1}}]
\label{SBL}
Suppose $(M,g(t))$ is a Ricci flow for $t\in [0,T]$ on a manifold $M$ of any dimension $n$.
Then there exists $\be=\be(n)\geq 1$ such that the following is true.
Suppose $x_0\in M$ and that $B_{g(0)}(x_0,r)\subset\subset M$ 
for some $r>0$,  
and $|\Rm|_{g(t)}\leq c_0/t$,
or more generally $\Ric_{g(t)}\leq (n-1)c_0/t$,
on $B_{g(0)}(x_0,r)\intersect B_{g(t)}(x_0,r-\be\sqrt{c_0 t})$ 
for each $t\in (0,T]$ and some $c_0>0$.
Then 
\beq
B_{g(0)}(x_0,r)\supset B_{g(t)}\left(\textstyle{x_0,r-\be\sqrt{c_0 t}}\right)
\eeq
for all $t\in [0,T]$.
More generally, for $0\leq s\leq t\leq T$, we have 
$$B_{g(s)}\left(\textstyle{x_0,r-\be\sqrt{c_0 s}}\right)
\supset B_{g(t)}\left(\textstyle{x_0,r-\be\sqrt{c_0 t}}\right).$$
\end{lemma}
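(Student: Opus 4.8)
The plan is to reduce the ball inclusions to a one-sided distance-distortion estimate along Ricci flow, and then to run a first-time-of-failure continuity argument in the spirit of the proof of Lemma~\ref{EBL2}; this is essentially the route of \cite[Corollary~3.3]{ST1}. The distance-distortion input is of Perelman type (cf.\ \cite[\S 8]{P1}): along a Ricci flow, if at some time $\tau$ one has $\Ric_{g(\tau)}\le (n-1)c_0/\tau$ throughout a $\sqrt{\tau/c_0}$-scale neighbourhood of a minimising $g(\tau)$-geodesic from $p$ to $q$, then the left Dini time-derivative of $\tau\mapsto d_{g(\tau)}(p,q)$ at $\tau$ is bounded below by $-\tfrac14\be\sqrt{c_0/\tau}$, where $\be=\be(n)\ge 1$ is taken to be four times the dimensional constant actually produced (so as to leave a strict margin). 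Integrating, and using $\int_{\tau_1}^{\tau_2}\sqrt{c_0/\tau}\,d\tau=2\sqrt{c_0}(\sqrt{\tau_2}-\sqrt{\tau_1})$, one obtains
\[
d_{g(t)}(p,q)\ \ge\ d_{g(s)}(p,q)-\tfrac12\be\sqrt{c_0}\,(\sqrt t-\sqrt s)
\]
whenever the relevant geodesics remain in the region where the curvature bound is assumed throughout $[s,t]$. The exponents here transform correctly under parabolic rescaling $g\mapsto\la g$, $t\mapsto\la t$, so nothing is lost in normalising.

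To prove the first inclusion one supposes it fails and lets $t_0\in(0,T]$ be the supremum of times up to which $B_{g(0)}(x_0,r)\supset B_{g(t)}(x_0,r-\be\sqrt{c_0 t})$ holds; it holds with equality at $t=0$, it persists for small $t>0$ (distances and the ball $B_{g(t)}(x_0,r-\be\sqrt{c_0 t})$ are close to their $t=0$ configuration, so $t_0>0$), and by continuity it holds throughout $[0,t_0]$ while failing at times arbitrarily close above it. Using $B_{g(0)}(x_0,r)\subset\subset M$ to extract a limit of witnessing points produces a point $z$ with $d_{g(0)}(x_0,z)=r$ and $d_{g(t_0)}(x_0,z)=r-\be\sqrt{c_0 t_0}$, together with a minimising $g(t_0)$-geodesic from $x_0$ to $z$ contained in the closure of the set $B_{g(0)}(x_0,r)\cap B_{g(t_0)}(x_0,r-\be\sqrt{c_0 t_0})$ on which the curvature bound is assumed. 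Then $\tau\mapsto d_{g(\tau)}(x_0,z)$ on $[0,t_0]$ satisfies $d_{g(\tau)}(x_0,z)\ge r-\be\sqrt{c_0\tau}$ with equality at $\tau=t_0$, so its left Dini derivative at $t_0$ is at most $-\tfrac12\be\sqrt{c_0/t_0}$; but the distance-distortion estimate at time $t_0$ bounds that same derivative below by $-\tfrac14\be\sqrt{c_0/t_0}$, which is strictly larger — a contradiction. The more general statement follows by running the same argument with the initial ball taken to be $B_{g(s)}(x_0,r-\be\sqrt{c_0 s})$; the curvature hypothesis remains usable because, by the first part, $B_{g(s)}(x_0,r-\be\sqrt{c_0 s})\cap B_{g(t)}(x_0,r-\be\sqrt{c_0 t})\subset B_{g(0)}(x_0,r)$. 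Alternatively it follows at once by integrating the distance-distortion estimate on $[s,t]$ once the first part is in hand.

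The main obstacle is to establish the distance-distortion estimate in a form compatible with the shape of the hypothesis: the curvature bound is assumed only on the very ball $B_{g(t)}(x_0,r-\be\sqrt{c_0 t})$ whose relation to $B_{g(0)}(x_0,r)$ is the conclusion, and the moving endpoint $z$ at the critical time sits on the boundary of that controlled region, so Perelman's estimate cannot be supplied a full curvature-controlled neighbourhood of $z$. One must instead use the one-sided version that needs curvature control near $x_0$ only, and then control the remaining Laplacian term, or recast the step as a maximum-principle statement in which that term carries the favourable sign; this is the technical heart, and the present paper may invoke \cite{ST1} for it. A secondary but omnipresent point, since $(M,g(t))$ is not assumed complete, is that every geodesic and distance function appearing above must be shown to be defined on $M$ and to depend continuously on $t$ — which is guaranteed by $B_{g(0)}(x_0,r)\subset\subset M$ together with the ball inclusions being established.
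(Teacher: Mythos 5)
The paper does not prove this lemma; it is quoted from \cite[Corollary~3.3]{ST1} and imported as a tool, so there is no in-paper proof against which to compare. Your outline is, up to packaging, the route of \cite{ST1}: a first-time-of-failure reduction to a one-sided rate estimate for $t\mapsto d_{g(t)}(x_0,z)$, fed by the Hamilton--Perelman second-variation bound, with the general inclusion obtained by shifting the base time.

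The obstacle you flag at the end, however, is not a genuine one, and it is worth being clear about why. Hamilton's lower bound for $\tfrac{d}{dt}L_{g(t)}(\gamma)=-\int_\gamma\Ric_{g(t)}(\dot\gamma,\dot\gamma)$ is obtained by testing the second variation with $V_i=\phi E_i$, where $E_i$ is a parallel orthonormal frame along $\gamma$ and $\phi$ vanishes at the endpoints; the resulting inequality $\int_\gamma\Ric\leq(n-1)\int|\phi'|^2+\int(1-\phi^2)\Ric$ consults $\Ric_{g(t_0)}(\dot\gamma,\dot\gamma)$ only on the two end-intervals of the geodesic where $\phi<1$, i.e.\ \emph{along $\gamma$ itself}, never on an open spatial neighbourhood of $z$. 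At your critical time $t_0$, the whole of $\gamma\setminus\{z\}$ lies in $B_{g(t_0)}(x_0,r-\be\sqrt{c_0 t_0})$, and (because the inclusion held for earlier times) in $\overline{B_{g(0)}(x_0,r)}$; continuity of $\Ric_{g(t_0)}$ extends the hypothesis $\Ric_{g(t_0)}\leq(n-1)c_0/t_0$ to this closure, which is all the estimate needs. In the short-geodesic regime where one cannot insert a bump of scale $\sqrt{t_0/c_0}$, one instead bounds $\int_\gamma\Ric\leq L\cdot(n-1)c_0/t_0\leq 2(n-1)\sqrt{c_0/t_0}$ directly. So the one-sided barrier/Laplacian-comparison detour you contemplate is unnecessary. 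Two smaller caveats: your ``alternatively, integrate on $[s,t]$'' derivation of the second inclusion still requires a bootstrap to know the intermediate-time geodesic stays where the curvature hypothesis applies, so the shifted-ball application of the first part is the cleaner route; and extracting the witnessing point $z$ needs a word about compactness (the candidate points sit at the ends of $g(t_k)$-minimising geodesics of length less than $r$ emanating from $x_0$, which confines them using $B_{g(0)}(x_0,r)\subset\subset M$ together with the inclusion already in force for $t<t_0$).
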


\bcmt{maybe worth noting here that the shrinking balls corollary can be strengthened a bit trivially by only requiring hypotheses between the times considered, not for times all the way down to $t=0$}

\section{Distance estimates and bi-H\"older equivalence}

In this section we explain how the local control on the curvature that we typically have in this paper leads to 
control on the distance function. The main novelty is a H\"older estimate \eqref{holder_est}.


\bcmt{beware: $\Om_T$ as defined below could be disconnected}

\begin{lemma}[Bi-H\"older distance estimate]
\label{biholder_dist2}
Suppose $(M^n,g(t))$ is a  Ricci flow for $t\in (0,T]$, not necessarily complete, with the property that for some $x_0\in M$ and $r>0$, and all
$t\in (0,T]$, we have $B_{g(t)}(x_0,2r)\subset\subset M$.
Suppose further that for some $c_0, \al>0$, and for each
$t\in (0,T]$,  we have
\begin{equation}
\left\{
\begin{aligned}
& \Ric_{g(t)}\geq -\al 
\\
& \Ric_{g(t)}\leq \frac{(n-1)c_0}{t}
\end{aligned}
\right.
\end{equation}
throughout $B_{g(t)}(x_0,2r)$.
By Remark \ref{dist_rmk}, for all $x,y\in \Om_T\subset M$, where
$$\Om_T:=\bigcap_{t\in (0,T]}B_{g(t)}(x_0,r),$$
the distance $d_{g(t)}(x,y)$ is unambiguous for all $t\in (0,T]$ and must be realised by a minimising geodesic lying within $B_{g(t)}(x_0,2r)$. Then 
for any 
$0< t_1\leq t_2\leq T$, we have
\beq
\label{main_dist_est2}
d_{g(t_1)}(x,y)-\be\sqrt{c_0}(\sqrt{t_2}-\sqrt{t_1})
\leq d_{g(t_2)}(x,y)
\leq e^{\al(t_2-t_1)}d_{g(t_1)}(x,y),
\eeq
where $\be=\be(n)$.
In particular, the distance metrics $d_{g(t)}$ converge uniformly to a distance metric $d_0$ on $\Om_T$ as $t\downto 0$, and 
\beq
\label{specialised_main_dist_est2}
d_0(x,y)-\be\sqrt{c_0t}
\leq d_{g(t)}(x,y)
\leq e^{\al t}d_0(x,y),
\eeq
for all $t\in (0,T]$.
Furthermore, there exists $\ga>0$ depending on $n$, $c_0$ and upper bounds for $r$ and $T$
such that
\beq
\label{holder_est}
d_{g(t)}(x,y)\geq \ga \left[d_0(x,y)\right]^{1+2(n-1)c_0},
\eeq
for all $t\in (0,T]$.
Finally, for all $t\in (0,T]$ and $R<R_0:=re^{-\al T}-\be\sqrt{c_0 T}<r$, we have
\beq
\label{ball_inclusions}
B_{g(t)}(x_0,R_0)\subset\m\subset \Om_T\qquad\text{and}\qquad
B_{d_0}(x_0,R)\subset\subset \m,
\eeq
where $\m$ is the component of 
$\Interior(\Om_T)$ containing $x_0$.
\end{lemma}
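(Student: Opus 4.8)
\emph{Proof strategy.} I would handle the four conclusions in turn, the only genuinely new one being the Hölder bound \eqref{holder_est}.

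\emph{The two-sided estimate and the limit metric.} For \eqref{main_dist_est2} the plan is to follow the argument of \cite[Lemma 3.4]{ST1}; the one point needing care in the incomplete setting is that, since $\Om_T\subset B_{g(t)}(x_0,r)$ for every $t$, Remark \ref{dist_rmk} ensures that any two $x,y\in\Om_T$ are joined by a minimising $g(t)$-geodesic $\ga_t$ lying inside $B_{g(t)}(x_0,2r)$, where both Ricci bounds are available. Writing $h(t):=d_{g(t)}(x,y)$ and using $s\mapsto L_{g(s)}(\ga_{t_0})$ as an upper barrier for $h$ at each $t_0$, the bound $\Ric\geq-\al$ gives $\frac{d^+}{dt}h\leq\al h$, hence by Gronwall the right-hand inequality of \eqref{main_dist_est2}; the bound $\Ric\leq(n-1)c_0/t$ gives the left-hand inequality by the Perelman-type localisation near the endpoints used in \cite{ST1} (equivalently, by applying the shrinking balls Lemma \ref{SBL} centred at $x$). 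Since the $d_{g(t)}$-diameter of $\Om_T$ is $\leq 2r$ uniformly in $t$, \eqref{main_dist_est2} makes $\{d_{g(t)}\}$ uniformly Cauchy as $t\downto0$, so it converges uniformly on $\Om_T$ to a pseudometric $d_0$; letting $t_1\downto0$ in \eqref{main_dist_est2} yields \eqref{specialised_main_dist_est2}, and its right-hand inequality $d_0(x,y)\geq e^{-\al t}d_{g(t)}(x,y)$ shows $d_0$ is a genuine metric.

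\emph{The Hölder estimate.} The crux is the \emph{multiplicative} decay bound, which uses only the curvature bound \emph{along} $\ga_{t_0}$ and so needs no smallness hypothesis: using $\ga_{t_0}$ as an upper barrier with $\Ric\leq(n-1)c_0/t$ gives $\frac{d^-}{dt}\log h\geq-(n-1)c_0/t$, hence
\beq
\label{mult_plan}
d_{g(t)}(x,y)\geq (s/t)^{(n-1)c_0}\,d_{g(s)}(x,y)\qquad\text{for }0<s\leq t\leq T.
\eeq
Given $x,y\in\Om_T$ I would set $D:=d_0(x,y)$ and choose the scale $s:=D^2/(4\be^2c_0)$. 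If $t<s$ then $\be\sqrt{c_0t}<D/2$, so \eqref{specialised_main_dist_est2} already gives $d_{g(t)}(x,y)\geq D-\be\sqrt{c_0t}\geq D/2$, which exceeds $\half(2r)^{-2(n-1)c_0}D^{1+2(n-1)c_0}$ because $D\leq 2r$. If instead $s\leq\min\{t,T\}$ then $\be\sqrt{c_0s}=D/2$, so \eqref{specialised_main_dist_est2} at time $s$ gives $d_{g(s)}(x,y)\geq D/2$, and feeding this into \eqref{mult_plan} gives $d_{g(t)}(x,y)\geq\half(D^2/(4\be^2c_0t))^{(n-1)c_0}D\geq\half(4\be^2c_0T)^{-(n-1)c_0}D^{1+2(n-1)c_0}$. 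Thus \eqref{holder_est} holds with $\ga:=\min\{\half(4\be^2c_0T)^{-(n-1)c_0},\ \half(2r)^{-2(n-1)c_0}\}$, depending only on $n$, $c_0$ and upper bounds for $r,T$. (The factor $2$ in the exponent of \eqref{holder_est} is exactly the one produced by the $D^2$ in the scale $s$.)

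\emph{The ball inclusions.} One may assume $R_0>0$, else all balls in \eqref{ball_inclusions} are empty. Fix $t\in(0,T]$. For $s\in(0,t]$ I would apply the shrinking balls Lemma \ref{SBL} to the reparametrised flow $\tau\mapsto g(s+\tau)$, $\tau\in[0,t-s]$ (its hypotheses hold because $\Ric_{g(s+\tau)}\leq(n-1)c_0/(s+\tau)\leq(n-1)c_0/\tau$ on $B_{g(s+\tau)}(x_0,2r)$ and $B_{g(s)}(x_0,r)\subset\subset M$), obtaining $B_{g(s)}(x_0,r)\supset B_{g(t)}(x_0,r-\be\sqrt{c_0(t-s)})\supset B_{g(t)}(x_0,R_0)$, using $R_0\leq r-\be\sqrt{c_0T}$. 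For $s\in[t,T]$ I would apply the expanding balls Lemma \ref{EBL2} to $\tau\mapsto g(s+\tau)$, $\tau\in[t-s,0]$, with $K=\al$ (the curvature hypothesis holding on $B_{g(s+\tau)}(x_0,re^{\al\tau})\subset B_{g(s+\tau)}(x_0,2r)$ since $\tau\leq0$), obtaining $B_{g(s)}(x_0,r)\supset B_{g(t)}(x_0,re^{\al(t-s)})\supset B_{g(t)}(x_0,R_0)$, using $re^{\al(t-s)}\geq re^{-\al T}\geq R_0$. Intersecting over $s\in(0,T]$ gives $B_{g(t)}(x_0,R_0)\subset\Om_T$. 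For the second inclusion, given $z$ with $d_0(x_0,z)<R<R_0$, for small $t>0$ equation \eqref{specialised_main_dist_est2} gives $d_{g(t)}(x_0,z)<R_0$, so by Remark \ref{dist_rmk} there is a minimising $g(t)$-geodesic $\si$ from $x_0$ to $z$ of length $<R_0$; every point $w$ of $\si$ has $d_{g(t)}(x_0,w)<R_0$, so $B_{g(t)}(w,R_0-d_{g(t)}(x_0,w))\subset B_{g(t)}(x_0,R_0)\subset\Om_T$, whence $w\in\Interior(\Om_T)$. Hence $\si$ is a connected subset of $\Interior(\Om_T)$ joining $x_0$ to $z$, so $z\in\m$; this proves $B_{d_0}(x_0,R)\subset\m$. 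Finally, choosing $t>0$ small with $e^{\al t}R<R_0$, \eqref{specialised_main_dist_est2} gives $B_{d_0}(x_0,R)\subset B_{g(t)}(x_0,e^{\al t}R)$, whose closure lies in $\{d_{g(t)}(x_0,\cdot)\leq e^{\al t}R\}\subset B_{g(t)}(x_0,R_0)\subset\m$ and is compact (being closed in the compact set $\overline{B_{g(t)}(x_0,2r)}$); so $B_{d_0}(x_0,R)\subset\subset\m$.

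\emph{Main obstacle.} The new content is \eqref{holder_est}, and its proof rests entirely on the multiplicative bound \eqref{mult_plan} together with the scale choice $s\sim D^2/c_0$. Otherwise the fiddliest part is the bookkeeping in \eqref{ball_inclusions}: one must check throughout that the metric balls and minimising geodesics invoked really lie inside $B_{g(\cdot)}(x_0,2r)$ — so that the Ricci bounds, Remark \ref{dist_rmk}, and Lemmas \ref{EBL2} and \ref{SBL} all apply — and that the radii $R_0\leq r-\be\sqrt{c_0T}$, $R_0\leq re^{-\al T}$ and $e^{\al t}R<R_0$ line up correctly.
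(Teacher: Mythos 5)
Your proof is correct and follows essentially the same route as the paper's. A few detailed comparisons:

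For the two-sided estimate \eqref{main_dist_est2} you cite the \cite[Lemma 3.4]{ST1} argument, while the paper directly differentiates $t\mapsto d_{g(t)}(x,y)$ a.e.\ and integrates; these are the same argument in different packaging, and your observation that Remark \ref{dist_rmk} supplies the minimiser inside $B_{g(t)}(x_0,2r)$ is exactly the point the paper is making.

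For the H\"older estimate, your scale $s:=D^2/(4\be^2c_0)$ is identical to the paper's threshold time $t_3=\frac{1}{c_0}[\frac{1}{2\be}d_0(x,y)]^2$, and your multiplicative bound \eqref{mult_plan} is the integrated form of the paper's differential inequality $\frac{d}{dt}d_{g(t)}\geq -(n-1)\frac{c_0}{t}d_{g(t)}$. You make explicit the step (absorbing $D\leq 2r$) that converts the clean lower bound $d_{g(t)}\geq D/2$ on the short time range into the H\"older form; the paper leaves this implicit but it is the same bookkeeping. Your handling of the case $s>T$ (only the first regime occurs) is also correct.

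For the ball inclusions, your argument for $B_{g(t)}(x_0,R_0)\subset\Om_T$ via SBL on the shifted flow for $s\le t$ and EBL for $s\ge t$ is identical to the paper's. For the second inclusion, the paper goes through the intermediate ball $B_{g(t)}(x_0,\frac{R+R_0}{2})\subset\subset\Interior(\Om_T)$ and uses uniform convergence $d_{g(t)}\to d_0$ to fit $B_{d_0}(x_0,R)$ inside it; your route instead shows $B_{d_0}(x_0,R)\subset\m$ directly by tracing geodesics inside $B_{g(t)}(x_0,R_0)$ for small $t$, and then gets compact containment from a closed sub-level set of $d_{g(t)}(x_0,\cdot)$. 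This is marginally more explicit about why one lands in the correct connected component $\m$ (rather than merely in $\Interior(\Om_T)$), but both arguments are sound and effectively the same.
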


\bcmt{care was required above in \eqref{ball_inclusions}. The first inclusion cleanly implies that $B_{g(t)}(x_0,R)\subset\subset \Interior(\Om_T)$. But the same would not work for the second inclusion - note that the metric $d_0$ is only even defined on $\Om_T$!}

To clarify, the first inclusion of \eqref{ball_inclusions} will be vacuous if $T$ is sufficiently large that $R_0$ is nonpositive.

\bcmt{we're leaving implicit that $d_0$ induces the same topology as we already have on $M$. This is following from 
\eqref{specialised_main_dist_est2} and \eqref{holder_est} because $d_{g(t)}$ must generate $M$'s topology}

\bcmt{$\Om_T$ is nonempty since it includes $x_0$, but we need these final quantitative statements.
Note that $\Om_T$ can be closed. It might have been \emph{just}
$x_0$ without the final assertion about time $t$ balls!}

\brmk
A consequence of this lemma is that 
the identity map $(\m,d_{g(t)})\to (\m,d_0)$ is H\"older continuous with H\"older exponent $[1+2(n-1)c_0]^{-1}$,
and the
identity map $(\m,d_0)\to (\m,d_{g(t)})$ in the other direction is Lipschitz.
The H\"older exponent can be seen to be sharp on  solitons coming out of cones in two dimensions.
\ermk

\brmk
\label{got0}
We have stated the lemma for Ricci flows that exist for $t\in (0,T]$, since that is the situation for limit flows considered in this paper. However, we can apply the lemma to Ricci flows that exist for $t\in [0,T]$, in which case we automatically have $d_0=d_{g(0)}$, and the conclusions of the lemma hold on the whole interval $[0,T]$.
\ermk

\brmk
Estimate \eqref{holder_est} only uses the \emph{upper} curvature bound of the lemma, i.e. the Ricci lower bound is not required.
In contrast to earlier distance estimates, we prove it by splitting the time interval $[0,t]$ into two subintervals $[0,t_3]$
and $[t_3,t]$, and controlling the distance on each using 
different techniques.
\ermk

\begin{proof}[Proof of Lemma \ref{biholder_dist2}]
The function $t\mapsto d_{g(t)}(x,y)$ is 
locally Lipschitz on $(0,T]$.
At times at which this function is differentiable, we have
$$\frac{d}{dt}d_{g(t)}(x,y)=\frac{d}{dt}L_{g(t)}(\ga),$$
where $\ga$ is a unit-speed minimising geodesic from $x$ to $y$ at the given time.
On the one hand, we have
\begin{align}
\label{ddtd_formula}
\frac{d}{dt}d_{g(t)}(x,y) &=\frac{d}{dt}L_{g(t)}(\ga)=-\int_\ga\Ric(\dot\ga,\dot\ga)\\
&\leq \al\, d_{g(t)}(x,y),
\end{align}
and integrating in time gives the second inequality of 
\eqref{main_dist_est2}.
On the other hand, we know from Hamilton-Perelman \cite[(3.6)]{ST1}
that
$$\frac{d}{dt}d_{g(t)}(x,y)=\frac{d}{dt}L_{g(t)}(\ga)\geq -\frac{\be}{2}\sqrt{c_0/t},$$
where $\be=\be(n)\geq 1$ is ultimately the $\be$ that appears in Lemma \ref{SBL}.
Integrating in time now gives the first inequality of 
\eqref{main_dist_est2}.
Having established \eqref{main_dist_est2}, the existence of $d_0$, and the estimate
\eqref{specialised_main_dist_est2}, is obvious.
Note that the second inequality of \eqref{specialised_main_dist_est2} guarantees that $d_0(x,y)=0$ implies $x=y$, as required for $d_0$ to be a metric.

The first step to proving \eqref{holder_est} is to observe that for
$$0<t\leq t_3:=\frac{1}{c_0}\left[\frac{1}{2\be}d_0(x,y)\right]^2,$$
the first inequality of \eqref{specialised_main_dist_est2} implies that
\beq
\label{first_time_interval2}
d_{g(t)}(x,y)\geq \half d_0(x,y).
\eeq
In particular, for $t\leq t_3$ we have established 
a stronger conclusion than our desired  \eqref{holder_est}.
On the other hand, to prove \eqref{holder_est} for 
times after $t_3$, we first use \eqref{first_time_interval2} at the last possible time $t_3$, giving
\beq
\label{first_time_interval_t_3}
d_{g(t_3)}(x,y)\geq \half d_0(x,y),
\eeq
and then 
use the consequence of \eqref{ddtd_formula} that
$$\frac{d}{dt} d_{g(t)}(x,y)\geq -(n-1)\frac{c_0}{t} d_{g(t)}(x,y),$$
at times at which $t\mapsto d_{g(t)}(x,y)$ is differentiable,
which implies, when integrated from time $t_3$ to any later time $t>t_3$, that 
$$d_{g(t)}(x,y)\geq d_{g(t_3)}(x,y)\left[
\frac{t}{t_3}\right]^{-(n-1)c_0}.$$
Inserting the formula for $t_3$, and using \eqref{first_time_interval_t_3} we obtain
\beq
\label{lower_dist_bd_holder2}
d_{g(t)}(x,y)\geq 
\ga(n,c_0)
\left[d_0(x,y)\right]^{1+2(n-1)c_0}
t^{-(n-1)c_0},
\eeq
which is a little stronger than required.

It remains to prove the inclusions \eqref{ball_inclusions}.
On the one hand, for $s\in (0,t)$ we have
$R_0+\be\sqrt{c_0(t-s)} <R_0+\be\sqrt{c_0 T}
=re^{-\al T}<r$, and so by Lemma \ref{SBL} applied from time $s$ onwards, we have
\beqa
B_{g(t)}(x_0,R_0) &\subset B_{g(s)}(x_0,R_0+\be\sqrt{c_0(t-s)})\\
&\subset B_{g(s)}(x_0,R_0+\be\sqrt{c_0 T})=B_{g(s)}(x_0,re^{-\al T})\\
&\subset B_{g(s)}(x_0,r).
\eeqa
On the other hand, for $s\in (t,T]$, we have
$R_0e^{\al (s-t)}< R_0e^{\al T}<r$, and so by 
Lemma \ref{EBL2} we have
$$
B_{g(t)}(x_0,R_0) \subset B_{g(s)}(x_0,R_0e^{\al (s-t)})
\subset B_{g(s)}(x_0,r).$$
Combining these two cases $s\in (0,t)$ and $s\in (t,T]$, we find that
$B_{g(t)}(x_0,R_0) \subset \Om_T$,
thus implying the first inclusions 
of \eqref{ball_inclusions}.

An immediate consequence of this inclusion is that 
$B_{g(t)}(x_0,\frac{R+R_0}{2}) \subset\subset \Interior(\Om_T)$, because
$\frac{R+R_0}{2}<R_0$.
Therefore the second inclusion of \eqref{ball_inclusions} will follow immediately if we can show that $B_{d_0}(x_0,R)\subset B_{g(t)}(x_0,\frac{R+R_0}{2})$ for some $t\in (0,T]$.
However, this will be true for sufficiently small $t>0$ by the uniform convergence of 
$d_{g(t)}$ to $d_0$ as $t\downto 0$.
\end{proof}

An optimised version of  estimate \eqref{lower_dist_bd_holder2} would tell us that we can take $\ga$ as close as we like to $1$
by taking $c_0$ sufficiently small.

\section{{Proof of the Local Existence Theorem \ref{main_exist_thm2}}}

We shall need the following generalisations and  consequences  of the Local Lemma, \cite[Lemma 2.1]{ST1}, the Double Bootstrap Lemma,   \cite[Lemma 9.1]{ST1} and Hochard's Lemma,  \cite[Lemma 6.2]{hochard}, in order to prove Theorem \ref{main_exist_thm2}.
In all of the three lemmata appearing below,
the Riemannian manifolds $(N,g)$ appearing are not necessarily complete. 
However, it still makes sense to define the injectivity radius at $p\in N$ as the supremum of the radii $r$ for which the exponential map at $p$ is well-defined on  the ball of radius $r$ in $T_pN$, and is a diffeomorphism from that ball to its image.

\begin{lemma}[{cf. \cite[Lemma 2.1]{ST1}}]
\label{modifiedlocal2}
Let $(N^3,g(t))_{t\in [0,T]}$ be a smooth Ricci flow such that for some fixed $x\in N$ we have $B_{g(t)}(x,1)\subset\subset N$ for all $t\in [0,T]$, and so that
\begin{compactenum}[(i)]
\item
$\VolB_{g(0)}(x,1) \geq v_0>0 $, and 
\item
$\Ric_{g(t)} \geq -1$ on $ B_{g(t)}(x,1) $ for all $t\in [0,T]$. 
\end{compactenum}
Then there exist $C_0 = C_0(v_0) \geq 1 $ and $\hat T = \hat T(v_0)>0$ such that 
$ |\Rm|_{g(t)}(x) \leq 
C_0/t
$,  and
$\inj_{g(t)}(x) \geq \sqrt{
t/C_0
}$ 
for all $0<t \leq \min(\hat T,T).$
\end{lemma}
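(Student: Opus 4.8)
The plan is to prove the two estimates by a point-selection and blow-up contradiction, in the spirit of \cite[Lemma 2.1]{ST1}, adapted to the incomplete and purely local setting here. Both assertions are scale invariant in the evident way, and since a smooth Ricci flow $(N^3,g(t))_{t\in[0,T]}$ with the stated properties is handed to us we only have to produce the a priori bounds. So I would suppose no admissible pair $C_0,\hat T$ exists, obtaining smooth Ricci flows $(N_i^3,g_i(t))_{t\in[0,T_i]}$, points $x_i$ with $B_{g_i(t)}(x_i,1)\subset\subset N_i$ for all $t$, obeying (i)--(ii) with the same $v_0$, and times $t_i\in(0,\min(1/i,T_i)]$ at which $t_i|\Rm|_{g_i(t_i)}(x_i)>i$ or $\inj_{g_i(t_i)}(x_i)<\sqrt{t_i/i}$. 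After passing to a subsequence we are in at least one of these two cases; I would treat the curvature case first and then recover the injectivity-radius bound from it.

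Assume $t_i|\Rm|_{g_i(t_i)}(x_i)\to\infty$. The key tool is a point-selection lemma -- a generalisation of Hochard's Lemma \cite[Lemma 6.2]{hochard}, of the type this section develops -- applied inside the compactly-contained spacetime region $\bigcup_{t\in(0,t_i]}B_{g_i(t)}(x_i,1)$. It yields points $(y_i,s_i)$ with $Q_i:=|\Rm|_{g_i(s_i)}(y_i)\to\infty$, with $s_iQ_i\to\infty$, and with $|\Rm|\leq 4Q_i$ on a backward parabolic neighbourhood of $(y_i,s_i)$ whose radius and time-depth both tend to $\infty$ after rescaling by $Q_i$; the crucial output is that $y_i$ is at $\sqrt{Q_i}$-rescaled distance tending to $\infty$ from $\partial B_{g_i(s_i)}(x_i,1)$, so that boundary effects cannot be seen in the blow-up. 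Put $\tilde g_i(t):=Q_i g_i(s_i+t/Q_i)$.

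Next I would take a limit. From the curvature control just described, together with a localised version of Perelman's no-local-collapsing (run near $(y_i,s_i)$, where the needed curvature bounds hold, with $\kappa>0$ coming from the unit-ball volume bound (i) propagated forward -- here (ii) enters, since the Ricci lower bound controls the change of distances and hence of volumes of the relevant balls), one gets uniform lower bounds on the injectivity radius at the scales needed for Hamilton's compactness theorem. A subsequence of $(N_i,\tilde g_i(t),y_i)$ then converges to a complete ancient Ricci flow $(N_\infty,g_\infty(t))_{t\in(-\infty,0]}$ of bounded curvature, with $|\Rm|_{g_\infty(0)}(y_\infty)=1$ (so non-flat), with $\Ric_{g_\infty}\geq 0$ (the lower bound $-1$ in (ii) rescales away) and $\kappa$-noncollapsed. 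In dimension three, Hamilton--Ivey pinching applied to $g_\infty$ on $[t_0,0]$ and letting $t_0\to-\infty$ upgrades $\Ric\geq0$ to $\Rm\geq 0$, so $(N_\infty,g_\infty(t))$ is a $\kappa$-solution. A contradiction then follows by playing the structure and classification of three-dimensional $\kappa$-solutions (Perelman) against the volume information carried through the blow-up from (i): a non-flat three-dimensional $\kappa$-solution is too thin on large scales to accommodate the definite amount of volume that the unit-scale bound forces on large $g_\infty(0)$-balls, so $N_\infty$ must be flat, contradicting $|\Rm|_{g_\infty(0)}(y_\infty)=1$. This gives $|\Rm|_{g(t)}(x)\leq C_0(v_0)/t$ on $(0,\min(\hat T(v_0),T)]$. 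Granted this, the bound $\inj_{g(t)}(x)\geq\sqrt{t/C_0}$ (after possibly enlarging $C_0$) follows from a curvature bound $|\Rm|_{g(t)}\leq C_0'/t$ on a definite $g(t)$-ball about $x$ -- which the blow-up argument also provides -- together with the same localised non-collapsing, via the standard passage from curvature bounds plus volume lower bounds to injectivity-radius lower bounds; alternatively a second, parallel blow-up argument handles any subsequence violating only the injectivity-radius bound.

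I expect the main obstacle to be the point-selection in the incomplete setting: one must choose $(y_i,s_i)$ precisely enough that the rescaled flows converge to a genuinely \emph{complete} ancient limit, i.e. so that the inevitable loss of control near $\partial B_{g_i(t)}(x_i,1)$ is pushed out to infinity after rescaling -- exactly what Hochard's lemma and its refinements in this section are built to do. The second delicate point is transporting the volume non-collapsing (available at time $0$ via Bishop--Gromov from (i)) forward to time $s_i$ and through the blow-up, so that it can be matched against the classification of $\kappa$-solutions to close the contradiction; this is where the Ricci lower bound (ii) and the localised no-local-collapsing do their work.
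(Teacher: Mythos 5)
Your proposal takes a fundamentally different -- and vastly more ambitious -- route than the paper's actual proof. The paper's proof is essentially a citation: it invokes \cite[Lemma 2.1]{ST1} directly (that lemma is already formulated for the local, incomplete setting) to obtain both the $C_0/t$ curvature decay on $B_{g(t)}(x,1/2)$ \emph{and} a uniform lower volume bound $\VolB_{g(t)}(x,1/2)\geq\eta_0(v_0)$ for $t\in(0,\min(\hat T,T)]$, noting only that Bishop-Gromov (using the Ricci lower bound assumed here on the full unit ball) converts the unit-ball volume bound of \cite{ST1} to the half-ball volume bound needed. The injectivity radius bound at $x$ then follows from the Cheeger-Gromov-Taylor estimate together with Bishop-Gromov, applied after rescaling $g(t)$ by $1/t$. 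That is the whole proof. You, by contrast, are proposing to reprove \cite[Lemma 2.1]{ST1} from scratch via a Perelman-style blow-up and the classification of three-dimensional $\kappa$-solutions.

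Within your route there are concrete problems. First, Hochard's Lemma \cite[Lemma 6.2]{hochard} is \emph{not} a point-selection lemma: as used elsewhere in this paper (and restated as Lemma \ref{HochardsLemma2}), it is a device for conformally blowing up a metric near the boundary of an open set so that the result is complete with controlled curvature. It has nothing to do with choosing $(y_i,s_i)$ with good parabolic neighbourhoods; for that you would need an adaptation of Perelman's point-picking to the local incomplete setting, which is not addressed. Second, the passage to a $\kappa$-solution and the invocation of its classification needs a localised version of Perelman's no-local-collapsing propagated to the time $s_i$ and base point $y_i$ of the blow-up, together with a propagation of the time-zero volume bound forward in time and across the ball -- you flag these as ``delicate'' but give no indication of how to carry them out in the incomplete setting, and this is precisely the technical core of \cite[Lemma 2.1]{ST1}. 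Third, the Hamilton-Ivey step needs more care than stated: the hypotheses only give $\Ric\geq -1$ on the original flows, which furnishes no sectional-curvature pinching to feed into Hamilton-Ivey at the sequence level; your appeal works only because Hamilton-Ivey is applied to the \emph{complete, bounded-curvature, ancient limit flow}, where the ancient time interval does the normalisation. That should be made explicit, since it is the difference between a correct and an incorrect use of the estimate. In short, your argument, even if it could be completed, amounts to reproving the key lemma from \cite{ST1} that the paper is designed to quote rather than rederive, and the sketch as written omits precisely the steps that make that lemma nontrivial.
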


\begin{proof}
Lemma 2.1 of \cite{ST1} tells us that
there exist $C_0 = C_0(v_0)\geq 1$, $\hat T = \hat T(v_0)>0$ and $\eta_0=\eta_0(v_0)>0$  such that 
$ |\Rm|_{g(t)} \leq \frac{C_0}{t}$ on $B_{g(t)}(x,1/2)$ 
and $\VolB_{g(t)}(x,1/2) \geq \eta_0$ for all $t \in (0,\min (\hat T,T)]$.
(The lemma there gave the volume of the unit ball, but we are assuming a Ricci lower bound throughout the unit ball, so Bishop-Gromov applies.)
The injectivity radius estimate of Cheeger-Gromov-Taylor \cite{CGT} and the Bishop-Gromov comparison principle then tell us (after scaling $g(t)$ to $\ti g = \frac{1}{t}g(t)$ for each $t$, and then scaling back), that there exists  $i_0 = i_0(\eta_0,C_0) = i_0(v_0)>0$ such that
$ \inj_{g(t)}(x) \geq i_0 \sqrt{t}$. 
By increasing $C_0$ if necessary, we may assume without loss of generality that $ i_0 \geq \frac{1}{\sqrt{C_0}}$. 
\end{proof}

\begin{lemma}[{cf. Double Bootstrap Lemma \cite[Lemma 9.1]{ST1}}]
\label{modifieddouble2}
Let $(N^3,g(t))_{t\in [0,T]}$ be a smooth Ricci flow, and $x\in N$, such that $ B_{g(0)}(x,2)$ is compactly contained in $N$ 
and so that throughout $B_{g(0)}(x,2) $ we have
\begin{compactenum}[(i)]
\item
$|\Rm|_{g(t)} \leq \frac{c_0}{t}$  for some $c_0\geq 1$ and all $t\in (0,T]$, and 
\item
$\Ric_{g(0)} \geq -\de_0$ for some $\de_0>0$.
\end{compactenum}
Then there exists $\hat S = \hat S(c_0,\de_0)>0$ such that 
$ \Ric_{g(t)}(x) \geq - 100 \de_0 c_0 $ 
for all $0\leq t \leq \min(\hat S,T).$
\end{lemma}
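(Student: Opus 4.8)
The plan is to follow the strategy of the Double Bootstrap Lemma in \cite{ST1}, adapting it to the local setting. The key point is that we only have the curvature bound $|\Rm|_{g(t)} \leq c_0/t$ on the ball $B_{g(0)}(x,2)$, rather than on a complete manifold, so we must be careful that all the maximum-principle and cutoff arguments stay inside this ball on the short time interval under consideration. First I would use the curvature bound $|\Rm|_{g(t)}\leq c_0/t$ on $B_{g(0)}(x,2)$ together with the shrinking balls lemma (Lemma \ref{SBL}) to guarantee that there is a time $\hat S_1 = \hat S_1(c_0)>0$ such that $B_{g(t)}(x,1)\subset\subset B_{g(0)}(x,2)$ for all $t\in[0,\min(\hat S_1,T)]$; this gives us a slightly smaller parabolic region inside which we have full curvature control, and on which we may safely apply local maximum principles.

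The heart of the argument is a two-stage bootstrap, as in \cite{ST1}. The first stage controls the scalar curvature (or the most negative Ricci eigenvalue) from below: on $B_{g(0)}(x,2)$ one has $\Ric_{g(0)}\geq-\de_0$, and one wants to propagate a bound of the form $\Ric_{g(t)}\geq -C\de_0$ for a short time. In three dimensions this uses the fact that, modulo lower order terms controlled by $|\Rm|\leq c_0/t$, the least eigenvalue of $\Ric$ satisfies a good differential inequality; one runs the maximum principle against a suitable spatial cutoff function $\vph$ supported in $B_{g(0)}(x,2)$ and evolving appropriately (using $|\grad\vph|^2/\vph$ and $|\Hess \vph|$ bounds, and absorbing the error terms coming from $\dop_t\vph$ and from $|\Rm|\leq c_0/t$ over the short time interval, which contributes a factor like $c_0$). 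The crucial arithmetic is that integrating the resulting ODE-with-error over $[0,t]$ costs a multiplicative constant controlled by $c_0$, producing the claimed $-100\de_0 c_0$ after also running the second bootstrap stage, which upgrades the pointwise bound at $x$ using the interior regularity improvement (this is where the ``double'' comes in: one first gets a bound on a ball at a slightly earlier time, then improves it at the centre point at a slightly later time).

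The main obstacle I expect is bookkeeping the cutoff error terms so that the final constant really is controlled purely by $c_0$ and $\de_0$ (and in particular is linear in $\de_0$), and in simultaneously ensuring $\hat S = \hat S(c_0,\de_0)$ is small enough that: (a) the time-$t$ balls stay compactly inside $B_{g(0)}(x,2)$, (b) the $c_0/t$ curvature singularity at $t=0$ is integrable against the cutoff dynamics, and (c) the quadratic reaction terms in the evolution of $\Ric$ have not yet had time to blow up the bound beyond the factor $100$. None of these steps is conceptually new relative to \cite[Lemma 9.1]{ST1}; the work is entirely in localising the estimates to $B_{g(0)}(x,2)$ and checking the constants, so I would present it as a careful adaptation rather than reproving the maximum principle machinery from scratch.
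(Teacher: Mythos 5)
Your first step — using the Shrinking Balls Lemma \ref{SBL} to fix a uniform $\hat S$ so that $B_{g(t)}(x,1)\subset B_{g(0)}(x,2)$ for $t\in[0,\min(\hat S,T)]$ — is exactly what the paper does, and is the only new observation required. However, you then propose to re-derive the entire maximum-principle argument of the Double Bootstrap Lemma locally, with cutoff functions adapted to $B_{g(0)}(x,2)$. This is unnecessary: the hypotheses of the original Lemma 9.1 of \cite{ST1} are already stated on time-$t$ balls around $x$ (not on a complete manifold), so once you have the inclusion $B_{g(t)}(x,1)\subset B_{g(0)}(x,2)$, the curvature and Ricci bounds you are given on $B_{g(0)}(x,2)$ hold \emph{a fortiori} on the time-$t$ balls that Lemma 9.1 actually uses. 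The paper's proof therefore consists of the ball-inclusion step followed by a \emph{direct} application of \cite[Lemma 9.1]{ST1} as a black box, after shrinking $\hat S$ once more if needed. Your plan would likely succeed if carried through carefully, but at the cost of reproving maximum-principle and cutoff machinery that the cited lemma already packages; the efficient route is to reduce to that lemma rather than adapt its proof. You should also beware that, as written, your plan implicitly treats Lemma 9.1 of \cite{ST1} as a global result requiring localisation, which mischaracterises it.
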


\begin{proof}
Using 
Lemma \ref{SBL}, we see that $B_{g(t)}(x,2-\beta \sqrt{c_0 t}) \subset B_{g(0)}(x,2) $ for some universal constant $\beta\geq 1$,
for all $t\in [0, T]$.
Choosing $\hat S = \frac{1}{\beta^2 c_0}$, we see 
that $ B_{g(t)}(x,1) \subset  
B_{g(0)}(x,2) $ for all $0\leq t \leq \min( \hat S,T)$.
The conclusions of the lemma now follow from direct application of 
\cite[Lemma 9.1]{ST1}, after decreasing $\hat S$ again if necessary.
\end{proof}

The proof of Theorem \ref{main_exist_thm2} will involve  
defining a local smooth solution $(B_{g_0}(x_0, r_1),g(t))$
  for $t \in [0,t_1]$ for some small (uncontrolled) $t_1>0$, and  $r_1 = s_0-1$, and then inductively defining smooth 
local extensions $(B_{g_0}(x_0, r_i) ,g(t))$
 for $t \in [0,t_i]$  where $t_i$ is a geometrically increasing sequence and 
$r_i$ is a decreasing sequence that nevertheless enjoys a good lower bound.
The general strategy of construction of a flow on dyadic time intervals was first used by
Hochard \cite{hochard}.
However, in our approach the solutions will each satisfy both $|\Rm|_{g(t)} \leq \frac{c_0}{t}$  and $\Ric_{g(t)} \geq -\al(v_0,\al_0) $
on $B_{g_0}(x_0, r_i)$ for all $t \in [0,t_i]$. 
 The main inductive step is achieved through the  Extension Lemma \ref{ExtensionLemma2} below, whose proof involves conformally modifying the metric, as in \cite[Lemma 6.2]{hochard}, and then using the two lemmata from above. 
We rewrite Hochard's Lemma here in a scaled form  for ease of application in the proofs that follow.

\begin{lemma}[{Variant of Hochard,  \cite[Lemma 6.2]{hochard}}]
\label{HochardsLemma2}
Let $(N^n,g)$ be a smooth (not necessarily complete) Riemannian manifold and let $U \subset N$ be an open set. 
Assume that for some $\rho\in (0, 1]$, we have
$\sup_{ U } |\Rm|_g  \leq \rho^{-2}$,
$B_{g}(x,\rho ) \subset \subset N$ and 
$\inj_{g}(x) \geq \rho$ for all $x \in U$. 
Then  there exist a constant $\ga=\ga(n)\geq 1$, 
an open set $\ti U \subset  U$ and 
a smooth metric $\ti g$ defined on $\ti U$ such that 
each connected component of
$(\ti U, \ti g)$ is a complete Riemmanian  manifold satisfying
\begin{compactenum}[\ (1)]
\item
$\sup_{\ti U} |\Rm|_{\ti g} \leq  \ga\rho^{-2}$
\item
$U_{\rho} \subset \ti U \subset  U$
\item
$\ti g= g  \ \mbox{\rm on} \  \ti U_{\rho} \supset U_{2\rho}$,
\end{compactenum} 
where  
$U_s = \{ x \in U \ | \ B_{g}(x,s) \subset \subset U \}$.
\end{lemma}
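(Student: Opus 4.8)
The plan is to build $\ti g$ as a conformal rescaling of $g$ that blows up, at a carefully tuned rate, near the portion of the frontier of $\ti U$ that we retain, in the spirit of \cite[Lemma 6.2]{hochard}. First, rescaling $g$ by the constant $\rho^{-2}$ (which multiplies $|\Rm|$ by $\rho^2$ and divides lengths and injectivity radii by $\rho$) and noting that all three conclusions are invariant under this rescaling, I may assume $\rho=1$, so that $\sup_U|\Rm|_g\le 1$, $\inj_g\ge 1$ and $B_g(x,1)\subset\subset N$ for every $x\in U$. Introduce the truncated distance
\[
r(x):=\min\{3,\ d_g(x,N\setminus U)\}
\]
(with $d_g(\cdot,\emptyset)=\infty$, so $r\equiv 3$ when $U=N$); it is $1$-Lipschitz and strictly positive on $U$, and the definition of $U_s$ gives $r\ge s$ on $U_s$ for $s\le 3$. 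The one point where incompleteness of $N$ genuinely enters is the following consequence of $B_g(x,1)\subset\subset N$: \emph{every $x\in U$ lies at $g$-distance $\ge 1$ from $\hat N\setminus N$}, where $\hat N$ is the metric completion; indeed a completion point within distance $<1$ of $x$ would lie in the $\hat N$-closure of $B_g(x,1)$, which equals the (compact, hence closed) $N$-closure $\overline{B_g(x,1)}$, and so would lie in $N$. Finally, since on $U$ we have $\inj_g\ge1$ and $|\Rm|_g\le1$, the metric is uniformly controlled in geodesic normal coordinates of some fixed small radius $s_0$ (say $s_0=\tfrac1{20}$) about each point of $\{r>s_0\}$ (these coordinate balls lie inside $U$), so a routine mollification–and–partition–of–unity argument replaces $r$ by a smooth $\ti r$ on $\{r>s_0\}$ with $|\ti r-r|\le\tfrac1{10}$, $|\nabla\ti r|_g\le2$ and $|\nabla^2\ti r|_g\le C(n)$.

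Next, fix a smooth profile $\psi:(\tfrac15,\infty)\to[0,\infty)$ with $\psi(t)=-\log(t-\tfrac15)$ for $t\in(\tfrac15,\tfrac3{10}]$ (so $\psi\to\infty$ at $\tfrac15$ and $e^{\psi(t)}\ge(t-\tfrac15)^{-1}$ there), $\psi\equiv0$ for $t\ge\tfrac12$, smoothly interpolating in between while staying $\ge0$, and set
\[
\ti U:=\{x\in U:\ \ti r(x)>\tfrac15\},\qquad \ti g:=e^{2\psi(\ti r)}g\ \text{ on }\ti U.
\]
Then $\ti U$ is open with $\ti U\subset U$, and $r\ge1$ on $U_1$ forces $\ti r\ge\tfrac9{10}$ there, giving $U_1\subset\ti U$; this is (2). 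For (3): if $x\in\ti U_1$ then $\overline{B_g(x,1)}\subset\ti U\subset U$, so $r>\tfrac15-\tfrac1{10}>0$ on all of $B_g(x,1)$; following a near-minimizing path from $x$ towards $N\setminus U$ then forces $d_g(x,N\setminus U)\ge1$, hence $r(x)\ge1$, $\ti r(x)\ge\tfrac9{10}\ge\tfrac12$, $\psi(\ti r(x))=0$, and $\ti g=g$ near $x$; thus $\ti g=g$ on $\ti U_1=\ti U_\rho$. Moreover $U_2\subset\ti U_1$, since for $x\in U_2$ one has $r\ge1$, hence $\ti r\ge\tfrac9{10}$, on the compact set $\overline{B_g(x,1)}$, whence $B_g(x,1)\subset\subset\ti U$.

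It remains to verify completeness of each component of $(\ti U,\ti g)$ and the curvature bound (1). For completeness, let $\gamma:[0,\ell)\to\ti U$ be a unit-$g$-speed curve leaving every compact subset of $\ti U$; I claim its $\ti g$-length is infinite. If $\ell=\infty$ this is immediate since $\psi\ge0$ gives $\ti g\ge g$. If $\ell<\infty$ then $\gamma$ is $g$-Cauchy, so converges in $\hat N$ to some $\hat x$; the distance fact above forces $\hat x\in N$, and $\hat x\notin N\setminus U$ (else $r(\gamma(s))\to0$, contradicting $\ti r>\tfrac15$ along $\gamma$), so $\hat x\in U$ and $\ti r(\gamma(s))\to\ti r(\hat x)$; this limit is $\ge\tfrac15$ but cannot exceed $\tfrac15$ (else $\gamma$ would converge in $\ti U$), so $\ti r(\gamma(s))\to\tfrac15^{+}$. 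Using $e^{\psi(t)}\ge(t-\tfrac15)^{-1}$ near $\tfrac15$ together with $|\tfrac{d}{ds}\ti r(\gamma(s))|\le|\nabla\ti r|_g\le2$, the $\ti g$-length $\int_0^\ell e^{\psi(\ti r(\gamma(s)))}\,ds$ dominates a positive multiple of $\int_0^\ell\big|\tfrac{d}{ds}\log\tfrac1{\ti r(\gamma(s))-1/5}\big|\,ds=+\infty$, so $(\ti U,\ti g)$ is complete. For (1), write $\phi=\psi(\ti r)$; the conformal transformation law gives $|\Rm|_{\ti g}\le C(n)\,e^{-2\phi}\big(|\Rm_g|_g+|\nabla^2\phi|_g+|\nabla\phi|_g^2\big)$, and since $\nabla\phi=\psi'(\ti r)\nabla\ti r$ and $\nabla^2\phi=\psi''(\ti r)\,d\ti r\otimes d\ti r+\psi'(\ti r)\nabla^2\ti r$, the bounds $|\nabla\ti r|_g\le2$, $|\nabla^2\ti r|_g\le C(n)$, $|\Rm_g|_g\le1$ reduce this to $C(n)\sup_{t>1/5}e^{-2\psi(t)}\big(1+|\psi'(t)|+|\psi'(t)|^2+|\psi''(t)|\big)$, which is a finite absolute constant by the explicit $\psi$ (near $\tfrac15$, $e^{-2\psi}$ is comparable to $(t-\tfrac15)^2$, $|\psi'|$ to $(t-\tfrac15)^{-1}$ and $|\psi''|$ to $(t-\tfrac15)^{-2}$, so each product is bounded; on the transition interval everything is bounded with $e^{-2\psi}\le1$; for $t\ge\tfrac12$ the bracket is $1$). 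Enlarging the constant gives $\ga=\ga(n)\ge1$, and undoing the rescaling restores the factor $\rho^{-2}$.

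I expect the main obstacle to be the interdependence of the choices rather than any single estimate: the threshold defining $\ti U$, the interval on which $\psi$ vanishes, and the permitted smoothing error in $\ti r$ have to be selected compatibly so that $\ti g=g$ on \emph{all} of $\ti U_\rho\supset U_{2\rho}$ while $\ti g$ blows up along the rest of the frontier of $\ti U$; and then the blow-up rate of $\psi$ must be simultaneously fast enough for completeness and slow enough for the curvature bound, the logarithmic rate being exactly the borderline that achieves both. The one genuinely technical ingredient is the smoothing lemma for $r$ with its Hessian bound — this is where the hypotheses $\inj_g\ge\rho$ and $|\Rm|_g\le\rho^{-2}$ are really used — and it is standard but should be cited or included.
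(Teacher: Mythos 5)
Your proof is essentially correct, but it takes a genuinely different route from the paper's. The paper's proof is short and citation-driven: after rescaling by $\rho^{-2}$ to reduce to $\rho=1$, it invokes Hochard's Lemma 6.2 from \cite{hochard} as a black box (with $k=C^2(n)$) and then upgrades the conclusion $\ti h = h$ on $\ti U_1$ to $\ti h = h$ on $\ti U_1 \supset U_2$ by a one-line triangle inequality. You instead reconstruct the content of Hochard's Lemma from scratch: you introduce the truncated distance $r$ to $N\setminus U$, smooth it to $\ti r$ using the injectivity-radius and curvature bounds, define $\ti U$ as a superlevel set of $\ti r$, and build $\ti g = e^{2\psi(\ti r)}g$ as a conformal blow-up with a carefully chosen logarithmic profile $\psi$, after which completeness and the curvature bound are direct computations. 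What your route buys is self-containment and insight into \emph{why} the lemma is true (the logarithmic rate being the precise borderline between completeness and bounded curvature); what you lose is brevity, and you still have to defer the only genuinely nontrivial technical step, the construction of $\ti r$ with $|\nabla\ti r|_g \le 2$ and $|\nabla^2\ti r|_g \le C(n)$ from $\inj_g\ge 1$ and $|\Rm|_g\le 1$, to an unproved smoothing lemma (which is essentially the hard part of Hochard's original argument anyway, so in that sense you have not escaped the citation). You correctly flag this. One small point worth double-checking in your write-up: the constants $\tfrac15,\tfrac{3}{10},\tfrac12$ and the smoothing error $\tfrac1{10}$ need to be tuned compatibly, and your argument that $\ti U_1 \subset\{\ti r\ge \tfrac12\}$ (so that $\psi\circ\ti r\equiv 0$ there) uses, as you note, the fact that $\overline{B_g(x,1)}\subset U$ forces $r(x)\ge 1$; this is fine but you should also observe that the step $U_2\subset\ti U_1$ is exactly the one the paper handles by a triangle inequality, and both approaches agree there.
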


\bcmt{be aware that we allow sets to be empty sets above, if $U$ is very small}

\bcmt{Can't just restrict $\ti U$ to one connected component or 
the inclusion $U_{\rho} \subset \ti U$ will fail}

The strategy of Hochard to prove this lemma is to conformally blow up the metric in a neighbourhood of the boundary of $U$ so that it looks essentially hyperbolic. Once we have a complete metric locally, we can run the Ricci flow with standard existence theory.
The idea of cutting off a metric locally and replacing it with a complete hyperbolic metric in order to start the flow was introduced in \cite{JEMS} in a much simpler situation. A global conformal deformation of metric that is related to Hochard's construction was carried out in Section 8, in particular Theorem 8.4, of \cite{MilesCrelle3D}.


\begin{proof}
Scale the metric $g$ by $h= \rho^{-2} g$.
The new metric $h$ satisfies 
$ \sup_{U} |\Rm|_h \leq 1$, $B_h(x,1) \subset \subset N$ and
$\inj_{h}(x) \geq 1$  for all $ x \in U$.
Let $C(n)$ be the constant from Hochard's Lemma 6.2 in \cite{hochard}. Without loss of generality this  constant satisfies $C(n)>1$ since otherwise we can set $C(n)$  to be the maximum of the old $C(n)$ and $2$, and note that the conclusions of that lemma  will still be correct.
We use  Hochard's Lemma 6.2 \cite{hochard} applied to the Riemannian manifold $(N,h)$ and the set $U$ appearing in the statement of this lemma, with the choice of $k =C^2(n)$.  We conclude that 
there exists an open  set $\ti U \subset U$ and a metric $\ti h$ defined on $\ti U$ such that  each connected component of $(\ti U,\ti h)$ is smooth and complete  and satisfies
\begin{eqnarray}
&& (1) \ \ \sup_{\ti U} |\Rm|_{\ti h} \leq \ga \\
&& (2) \ \ U_1 \subset \ti U \subset  U \\
&& (3)\ \  \ti h = h  \ \mbox{on} \  \ti U_{1}, 
\end{eqnarray} where
$U_s = \{ x \in U \ | \ B_{h}(x,s) \subset \subset U \},$ and $\ga := C^2(n)$. 
Let $x$ be any point in $U_2$. Then $B_{h}(x,2) \subset \subset U$ 
by definition, and hence, by the triangle inequality, $B_{h}(x,1) \subset \subset U_1  \subset \ti U$, and thus $x \in \ti U_1$.
This shows that
$U_2 \subset \ti U_1,$  in view of the fact that $x \in U_2$ was arbitrary. Hence, the conclusion $(3)$ above  may be replaced by 
$ \ti h = h  \ \mbox{on} \  \ti U_{1} \supset U_2$.
Scaling back, that is, defining 
$\ti g =  \rho^2\ti h$, completes the proof.
\end{proof}

The Extension Lemma \ref{ExtensionLemma2} shows us how we can extend a smooth solution for a short, but well-defined time, still maintaining bounds like $|\Rm|_{g(t)} \leq \frac{c_0}{t}$, if the initial Ricci curvature and volume are bounded from below.  The cost is that we have to decrease the size of the region on which the smooth solution is defined. The extension will only be possible if the size of the radius $r_1$ of the ball (with respect to  $g(0)$) where the solution is defined is not too small, $r_1 \geq 2$ will suffice, and the time for which the solution is defined is not too large. 

In the following lemma, no metrics are assumed to be complete.

\newcommand{\hatTtau}{{\tau}}

\begin{lemma}[Extension Lemma]
\label{ExtensionLemma2}
For $v_0>0$ given, there exist  $c_0\geq 1$ and $\hatTtau>0$
such that the following is true.
Let $r_1 \geq 2$, and $(M,g_0)$ be a smooth three-dimensional Riemannian manifold such that  $B_{g_0}(x_0,r_1) \subset \subset M$, and
\begin{compactenum}[(i)]
\item
$\Ric_{g_0} \geq -\al_0$ for some $\al_0  \geq 1$ on $B_{g_0}(x_0,r_1)$, and  
\item
$\VolB_{g_0}(x,r) \geq v_0 r^3$ for all $r\leq 1$ and all $x \in B_{g_0}(x_0,r_1-r).$
\end{compactenum}
Assume further that we are given a smooth Ricci flow  $(B_{g_0}(x_0,r_1),g(t))$, $t \in [0,\ell_1]$, where 
$\ell_1 \leq \frac{\hatTtau}{200\al_0 c_0}$,
with $g(0)$ equal to the restriction of $g_0$, for which
\begin{compactenum}[(a)]
\item
$ |\Rm|_{g(t)} \leq \frac{c_0}{t}$  and
\item
$\Ric_{g(t)}\geq -\frac{\hatTtau}{\ell_1}$
\end{compactenum}
on $B_{g_0}(x_0,r_1)$  for all $t \in (0,\ell_1]$.
Then, setting $\ell_2  = \ell_1(1+\frac{1}{4c_0})$ and 
$r_{2} = r_{1} - 6\sqrt{\frac{\ell_{2}}{\hatTtau}}\geq 1$,
the Ricci flow $g(t)$ can be extended smoothly to a Ricci flow on
the smaller ball $B_{g_0}(x_0, r_{2})$, for the longer time interval $t\in [0,\ell_2]$, with 
\begin{compactenum}[(a$'$)]
\item
$ |\Rm|_{g(t)} \leq \frac{c_0}{t}$  and
\item
$\Ric_{g(t)}\geq -\frac{\hatTtau}{\ell_2}$
\end{compactenum}
throughout $B_{g_0}(x_0,r_2)$  for all $t \in (0,\ell_2]$.
\end{lemma}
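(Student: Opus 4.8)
The plan is to build the extension by passing to the final time slice $g(\ell_1)$, modifying it conformally near the boundary --- following Hochard, Lemma \ref{HochardsLemma2} --- so that it becomes complete with bounded curvature, restarting the Ricci flow from that complete metric, and gluing the two pieces together; the constants $c_0\ge1$ and $\hatTtau>0$ (depending only on $v_0$) get pinned down at the very end so that all the estimates close. A preliminary parabolic rescaling is convenient: put $\lambda:=\hatTtau/\ell_1$, which is $\ge200$ by the hypothesis $\ell_1\le\hatTtau/(200\al_0c_0)$ and $\al_0,c_0\ge1$, and replace $g(t)$ by $\hat g(s):=\lambda\,g(s/\lambda)$, a Ricci flow for $s\in[0,\hatTtau]$ on $B_{g_0}(x_0,r_1)=B_{\hat g(0)}(x_0,\sqrt\lambda\,r_1)$, a ball of $\hat g(0)$-radius at least $2\sqrt{200}$. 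In these variables hypotheses (a), (b) become $|\Rm|_{\hat g(s)}\le c_0/s$ and $\Ric_{\hat g(s)}\ge-1$ throughout, hypothesis (i) becomes $\Ric_{\hat g(0)}\ge-\al_0\ell_1/\hatTtau\ge-\tfrac1{200c_0}$, and the scale-invariant hypothesis (ii) gives $\VolB_{\hat g(0)}(x,1)\ge v_0$ for $x$ in a slightly smaller ball. Each radius loss incurred below will be $O(1)$ in the $\hat g(0)$-metric, hence $O(1/\sqrt\lambda)=O(\sqrt{\ell_2/\hatTtau})$ in the $g_0$-metric, which is precisely why the radius need only drop to $r_1-6\sqrt{\ell_2/\hatTtau}$.

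First I would establish control at the \emph{final} time $s=\hatTtau$. Using the Shrinking and Expanding Balls Lemmata \ref{SBL}, \ref{EBL2} together with the smallness of $\hatTtau$ to see that $B_{\hat g(s)}(x,1)\subset\subset B_{\hat g(0)}(x_0,\sqrt\lambda r_1)$ for all $s\in[0,\hatTtau]$ whenever $x$ lies in a suitable slightly smaller ball $U$, and taking $\hatTtau\le\hat T(v_0)$, the modified Local Lemma \ref{modifiedlocal2} applied at each point of $U$ gives $|\Rm|_{\hat g(\hatTtau)}\le C_0/\hatTtau$ and $\inj_{\hat g(\hatTtau)}\ge\sqrt{\hatTtau/C_0}$ on $U$, with $C_0=C_0(v_0)$ --- the essential gain being the \emph{much smaller} constant $C_0$ in place of the merely hypothesised $c_0$. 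Next I would apply Hochard's Lemma \ref{HochardsLemma2} to $(U,\hat g(\hatTtau))$ with $\rho:=\sqrt{\hatTtau/C_0}\in(0,1]$, producing an open set $\ti U\subset U$ and a metric $\hat h$ on $\ti U$, each component of which is complete, with $|\Rm|_{\hat h}\le\ga C_0/\hatTtau$ ($\ga=\ga(3)$) and $\hat h=\hat g(\hatTtau)$ on $\ti U_\rho\supset U_{2\rho}$. Shi's short-time existence theorem then gives a complete Ricci flow $\hat h(s)$, $s\in[\hatTtau,\hatTtau+\sigma]$, starting from $\hat h$, with $|\Rm|_{\hat h(s)}\le2\ga C_0/\hatTtau$ and $\sigma\ge\sigma_0\hatTtau/(\ga C_0)$ for a dimensional constant $\sigma_0>0$.

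The extension is then the concatenation of $\hat g(s)$ (for $s\le\hatTtau$) with $\hat h(s)$ (for $s\ge\hatTtau$), restricted to the component of $\ti U_\rho$ containing $x_0$: on $\ti U_\rho$ the two flows have the same metric at $s=\hatTtau$ and satisfy the same equation, so all their $s$-derivatives match there and the concatenation is smooth. At this point I would \emph{choose} $c_0=c_0(v_0)$ large enough that $c_0\ge\tfrac52\ga C_0$ and $4\sigma_0c_0\ge\ga C_0$: the first inequality makes $2\ga C_0/\hatTtau\le c_0/s$ for all $s\in[\hatTtau,\lambda\ell_2]$, and the second makes $\sigma\ge\hatTtau/(4c_0)=\lambda(\ell_2-\ell_1)$, so that $\hat h(s)$ indeed reaches $s=\lambda\ell_2=\hatTtau(1+\tfrac1{4c_0})$. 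Hence on $\ti U_\rho$ --- which, once $\hatTtau$ is small enough, contains $B_{g_0}(x_0,r_2)=B_{\hat g(0)}(x_0,\sqrt\lambda r_2)$ by a short computation with Lemma \ref{SBL} --- the extended flow obeys $|\Rm|\le c_0/s$ for all $s\in(0,\lambda\ell_2]$, which is (a$'$) after undoing the rescaling. With this curvature bound in hand, the modified Double Bootstrap Lemma \ref{modifieddouble2}, applied with $\de_0:=1/(200c_0)$ from the rescaled hypothesis (i) and with $\hatTtau$ decreased further so that $\lambda\ell_2\le\hat S(c_0,\de_0)$, gives $\Ric_{\hat g(s)}\ge-100\de_0c_0=-\tfrac12$ for all $s\in[0,\lambda\ell_2]$ on (a slightly smaller ball inside) $\ti U_\rho$; undoing the rescaling, $\Ric_{g(t)}\ge-\hatTtau/(2\ell_1)\ge-\hatTtau/\ell_2$ (using $\ell_2\le2\ell_1$), which is (b$'$).

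I expect the main obstacle to be exactly the interplay of constants in the previous paragraph: one has to run the Local Lemma \ref{modifiedlocal2} \emph{before} Hochard's conformal surgery, so that the curvature fed into \ref{HochardsLemma2} is the small constant $C_0(v_0)/\hatTtau$ rather than $c_0/\hatTtau$ --- only then are Hochard's blow-up factor $\ga$ and the consequent Shi existence time $\sim\hatTtau/(\ga C_0)$ large enough both to reach $s=\lambda\ell_2$ and to keep $|\Rm|$ below $c_0/s$, and it is this requirement that dictates the large choice of $c_0=c_0(v_0)$. The remaining work is routine bookkeeping with the ball inclusion lemmata: one checks that the radius losses --- the fixed $1$ from the unit-ball hypothesis of \ref{modifiedlocal2}, the fixed $2$ from the $2$-ball hypothesis of \ref{modifieddouble2}, the $2\rho$-collar from \ref{HochardsLemma2}, and the $\be\sqrt{c_0\hatTtau}$-type drifts from Lemmata \ref{SBL}, \ref{EBL2} --- together stay below $6\sqrt{\ell_2/\ell_1}\ge6$ in $\hat g(0)$-units, which holds since every term other than the fixed $1$ and $2$ can be made as small as we please by further decreasing $\hatTtau$.
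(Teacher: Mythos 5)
Your proposal is correct and follows essentially the same route as the paper's proof: improve the $c_0/t$ curvature decay to the smaller $C_0(v_0)/t$ via the Local Lemma \ref{modifiedlocal2} (this is the paper's Claim 1), apply Hochard's Lemma \ref{HochardsLemma2} to the time-$\ell_1$ slice to obtain a complete metric with bounded curvature, restart with Shi's flow and invoke the doubling-time estimate to reach $\ell_2$ with $|\Rm|\le c_0/t$, and finally obtain the Ricci lower bound from the Double Bootstrap Lemma \ref{modifieddouble2}; the only cosmetic difference is that you perform one global parabolic rescaling at the outset while the paper rescales separately inside its two claims, and your two constraints on $c_0$ collapse (with $\sigma_0=1/16$) to exactly the paper's choice $c_0=4\gamma C_0$.
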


A version of the Extension Lemma can be given without assumption $(b)$ 
since we can always obtain a lower Ricci bound by application of the Double Bootstrap Lemma \ref{modifieddouble2}. However, in practice we will always have this lower bound already, and its inclusion simplifies the proof and emphasises the natural symmetry between the hypothesis and conclusion.



\begin{proof}
For the given $v_0$, let  $C_0\geq 1$ and $\hat T>0$  be the constants given by  Lemma \ref{modifiedlocal2}.
With this choice of $C_0$  we choose $c_0 = 4\ga C_0>C_0$, where 
$\ga\geq 1$ 
is the constant coming from Hochard's Lemma \ref{HochardsLemma2} above,
and set $\de_0 = \frac{1}{ 100 c_0}$,
and we let $\hat S$ be the constant we obtain from 
Lemma \ref{modifieddouble2} for these choices.
We define $\hatTtau:=\min\{\hat T,\hat S\}$ so that 
we are free to apply 
Lemma \ref{modifiedlocal2} and 
Lemma \ref{modifieddouble2} with $\hat T$, respectively $\hat S$, replaced by $\hatTtau$.
We also reduce $\hatTtau$ if necessary so that 
\beq
\label{c0hatT_assumps2}
\be^2c_0\hatTtau\leq 1,
\qquad\hatTtau\leq 1, 
\qquad\text{ and }\hatTtau\leq C_0/4,
\eeq
where $\be\geq 1$ is the constant from Lemma \ref{SBL}.
The constants are now fixed.
Note that 
\beq
\label{ell_constraints2}
\ell_1\leq \ell_2\leq 2\ell_1\leq \hatTtau\leq 1.
\eeq

\bcmt{let's record where we need some of these constraints:
We use $\be^2c_0\hatTtau\leq 1$ and $\hatTtau\leq C_0$ in claim 1. 
The first of these we also need in claim 2.
We additionally need $\hatTtau\leq 1$ when doing Hochard so we can be sure that $\ell_1\leq 1$. We need $\hatTtau\leq C_0/4$ in claim 2.
We use $2\ell_1\geq \ell_2$ when getting the curvature estimates on the extension flow using the doubling time estimate, and also right at the end when getting the lower Ricci bound.}

{\bf Claim 1:}
For all $x\in U :=  B_{g_0}(x_0,r_1 - 2\sqrt{\frac{ \ell_1}{\hatTtau}})$,
we have 
$B_{g(t)}(x,\sqrt{
t/C_0
} )\subset \subset B_{g_0}(x_0,r_1)$,  
$\inj_{g(t)}(x) \geq \sqrt{
t/C_0
}$ 
and $|\Rm|_{g(t)}(x)\leq 
C_0/t
$,
for all $t\in (0,\ell_1]$.


Note that by assumption, we have $c_0/t$ curvature decay; the claim improves this to $C_0/t$ curvature decay, albeit on a smaller ball, as well as obtaining an injectivity radius bound. The original $c_0/t$ decay will nevertheless be required to control the nesting of balls.

\emph{Proof of Claim 1:}
For $x \in B_{g_0}(x_0,r_1- 2\sqrt{\frac{\ell_1}{\hatTtau}})$, the triangle inequality implies that  $B_{g_0}(x,2\sqrt{\frac{\ell_1}{\hatTtau}}) \subset \subset B_{g_0}(x_0,r_1)$ and hence by hypothesis, $|\Rm|_{g(t)} \leq \frac{c_0}{t}$ 
and $\Ric_{g(t)}\geq -\frac{\hatTtau}{\ell_1}$
on $B_{g_0}(x,2\sqrt{\frac{\ell_1}{\hatTtau}})$ for all $t \in (0,\ell_1]$.  Scaling the solution to $\hat  g(t) := \frac{\hatTtau}{ \ell_1 } g(t \frac{\ell_1}{ \hatTtau})$ we see that we have a solution $\hat  g(t)$
on $B_{g_0}(x_0,r_1) \supset\supset B_{\hat  g(0)}(x,2)$,
$t \in[0, \hatTtau]$ with $|\Rm|_{\hat  g(t)}\leq \frac{c_0}{t}$  
and $\Ric_{\hat g(t)}\geq -1$
on $B_{\hat  g(0)}(x,2)$ for all $t\in (0,\hatTtau]$.

Applying 
Lemma \ref{SBL}
to $\hat g(t)$, we find that
$B_{\hat g(t)}(x,2-\be\sqrt{c_0 t})\subset 
B_{\hat g(0)}(x,2)$ for all $t\in [0,\hatTtau]$,
and in particular, $B_{\hat g(t)}(x,1)\subset 
B_{\hat g(0)}(x,2)$ because of \eqref{c0hatT_assumps2}.

This puts us in a position to apply
Lemma \ref{modifiedlocal2} to $\hat g(t)$, 
giving that $\inj_{\hat  g(t)}(x) \geq \sqrt{
t/C_0
}$
and $|\Rm|_{\hat g(t)}(x)\leq C_0/t$
for all $0< t\leq \hatTtau$.
Scaling back, we see that $B_{g(t)}(x,\sqrt{
\ell_1/\hatTtau
} )\subset \subset B_{g_0}(x_0,r_1)$,  
$\inj_{g(t)}(x) \geq \sqrt{
t/C_0
}$ and 
and $|\Rm|_{g(t)}(x)\leq C_0/t$
for all $t\in (0,\ell_1]$,
which is a little stronger than
Claim 1 because $t\leq \ell_1$ and $C_0\geq \hatTtau$ by \eqref{c0hatT_assumps2}, so
$\frac{t}{C_0}\leq \frac{\ell_1}{\hatTtau}$.
\hfill$//$

\vskip 0.5cm

Claim 1, specialised to $t=\ell_1$, puts us in exactly the situation we require in order to apply Lemma \ref{HochardsLemma2} with
$U =  B_{g_0}(x_0,r_1 - 2\sqrt{\frac{ \ell_1}{\hatTtau}})$, $N =  B_{g_0}(x_0,r_1)$,
$g = g(\ell_1)$ and 
$\rho^2 := \frac{\ell_1}{C_0}\leq 1$ 
(recall \eqref{ell_constraints2}).
The output is a new, possibly disconnected, smooth manifold $ (\ti U, \ti g)$, each component of which is complete,
 such that 
 \begin{eqnarray}
  &&(1)\ |\Rm|_{\ti g} \leq \frac{\ga C_0}{\ell_1}=\frac{c_0}{4\ell_1} \mbox{ on } \ti U \cr
 &&(2)\ \ U_{\sqrt{\frac{\ell_1}{C_0}} } \subset \ti U \subset U \cr
 && (3) \ \ \ti g = g(\ell_1) \mbox{ on }  
 \ti U_{\sqrt{\frac{\ell_1} { C_0 } } }
 \supset U_{2\sqrt{\frac{\ell_1} { C_0 } } },
 \end{eqnarray}
where $U_r = \{ x \in U \ | B_{g(\ell_1)}(x,r) \subset\subset U \}$.

Before restarting the flow with $\ti g$ on one component of $\ti U$, we take a closer look at where $g(\ell_1)$ equals $\ti g$:


{\bf Claim 2:}
We have $B_{g_0}(x_0,r_1 - 4\sqrt{\frac{ \ell_1}{\hatTtau}})\subset U_{2\sqrt{\frac{\ell_1} { C_0 } } }$, where the metrics $g(\ell_1)$ and $\ti g$ agree.

\emph{Proof of Claim 2:}
By definition of $U$, for every $x\in B_{g_0}(x_0,r_1 - 4\sqrt{\frac{ \ell_1}{\hatTtau}})$, we have 
$B_{g_0}(x,2\sqrt{\frac{ \ell_1}{\hatTtau}})\subset\subset U$.
By assumption, we have $|\Rm|_{g(t)}\leq c_0/t$ on 
$B_{g_0}(x_0,r_1)$, and hence on $U$ and on 
$B_{g_0}(x,2\sqrt{\frac{ \ell_1}{\hatTtau}})$ for all $t\in (0,\ell_1]$, and so 
Lemma \ref{SBL}
tells us that
$B_{g_0}(x,2\sqrt{\frac{ \ell_1}{\hatTtau}})\supset
B_{g(t)}(x,2\sqrt{\frac{ \ell_1}{\hatTtau}}-\be\sqrt{c_0 t})$
for all $t\in [0,\ell_1]$.
Specialising to $t=\ell_1$, and recalling that 
$\be\sqrt{c_0 \ell_1}\leq \sqrt{\ell_1/\hatTtau}$ by \eqref{c0hatT_assumps2}, we see that
$B_{g(\ell_1)}(x,\sqrt{\frac{ \ell_1}{\hatTtau}})\subset\subset U$,
and by \eqref{c0hatT_assumps2} 
this gives
$B_{g(\ell_1)}(x,2\sqrt{\frac{ \ell_1}{C_0}})\subset\subset U$
as required to establish that $x\in U_{2\sqrt{\frac{\ell_1} { C_0 } } }$.
\hfill$//$

\bcmt{used $\hatTtau\leq C_0/4$ of \eqref{c0hatT_assumps2} in this last bit}

We now restart the flow at time $\ell_1$, using Shi's complete bounded-curvature Ricci flow, starting at the connected component of 
$(\ti U,\ti g)$ that contains $x_0$.
By Claim 2 (and the inequality $\ell_1<\ell_2$) the new Ricci flow will live on a superset of 
$B_{g_0}(x_0,r_1 - 4\sqrt{\frac{ \ell_1}{\hatTtau}})
\supset
B_{g_0}(x_0,r_1 - 4\sqrt{\frac{ \ell_2}{\hatTtau}})$, and we call it still $g(t)$ for $t$ beyond $\ell_1$.
By the standard \emph{doubling time estimate} \cite[Lemma 6.1]{CLN}, any smooth complete bounded-curvature Ricci flow $h(t)$ 
such that 
$|\Rm|_{h(0)} \leq K$ must satisfy $|\Rm|_{h(t)} \leq 2K$ for 
$t\leq \frac{1}{16K}$.
In our situation, where $K=\frac{c_0}{4\ell_1}$, this tells us that $g(t)$ will exist
for $t\in [\ell_1,\ell_1(1+\frac{1}{4c_0})]=[\ell_1,\ell_2]$ and satisfy
$|\Rm|_{g(t)}\leq \frac{c_0}{2\ell_1}\leq \frac{c_0}{\ell_2}
\leq\frac{c_0}{t}$ throughout its domain of definition by \eqref{ell_constraints2}. 
Thus, we have constructed a smooth extension to our Ricci flow on
$B_{g_0}(x_0,r_1 - 4\sqrt{\frac{ \ell_2}{\hatTtau}})$, 
now existing for $t\in [0,\ell_2]$,
and this satisfies
$|\Rm|_{g(t)}\leq\frac{c_0}{t}$.

It remains to show that by reducing the radius of our $g_0$ ball where the extension is defined to $r_2=r_{1} - 6\sqrt{\frac{\ell_{2}}{\hatTtau}}$, we can be sure of the lower Ricci bound as claimed in the lemma. Pick an arbitrary
$x\in B_{g_0}(x_0, r_{2})$
at which we would like to establish the lower bound
$\Ric_{g(t)}\geq -\frac{\hatTtau}{\ell_2}$.
Observe that the extended Ricci flow is defined throughout
$B_{g_0}(x,2\sqrt{\frac{\ell_2}{\hatTtau}})
\subset\subset
B_{g_0}(x_0,r_1 - 4\sqrt{\frac{ \ell_2}{\hatTtau}})$ for all $t\in [0,\ell_2]$.

We scale up so that $\ell_2$ goes to $\hatTtau$. That is we define $\hat g(s) := \frac{\hatTtau}{\ell_2} g( \frac{s \cdot  \ell_2}{\hatTtau})$.
Then we have, for this scaled solution, 
$B_{\hat g(0)}(x,2)$ compactly contained within the domain of definition of the flow, for all $t\in [0,\hatTtau]$, and  $|\Rm|_{g(t)} \leq \frac {c_0}{t}$ on  $B_{\hat g(0)}(x,2)$  for all $t\in (0,\hatTtau]$.

Moreover, we have $\Ric_{\hat  g(0)} \geq -\de_0$ on 
$B_{\hat  g(0)}(x,2)$ 
for the choice of $\de_0$ made above, because keeping in mind
that $\ell_1 \leq \frac{\hatTtau}{200\al_0 c_0}$, we have
$$\Ric_{\hat  g(0)}
\geq -\al_0 \frac{\ell_2}{ \hatTtau}
\geq -2\al_0 \frac{\ell_1}{ \hatTtau}
\geq -\frac{1}{100c_0}
= -\de_0.$$
Using Lemma \ref{modifieddouble2}  with the $c_0$ and $\de_0$ 
we have chosen, 
we see that 
$\Ric_{\hat  g(t)}(x) \geq -1$ for all $t\in [0,\hatTtau]$.
Rescaling back, we see that 
$\Ric_{g(t)}(x) \geq -\frac{\hatTtau}{\ell_2}$ for all $t\in [0,\ell_2]$
as required to complete the proof.
\end{proof}

\brmk
\label{end_time_controlled_rmk}
With hindsight, we see that $(B_{g_0}(x_0,r_2),g(\ell_2))$ is the restriction to a local region 
of the final time of a complete Ricci flow $g(t)$ for $t\in [\ell_1,\ell_2]$ with the property that
$|\Rm|_{g(t)}\leq c_0/\ell_1$. Global derivative bounds for such flows, applied over the time interval $[\ell_1,\ell_2]$, give us a bound $|\grad^k\Rm|_{g(\ell_2)}\leq C/\ell_2^{1+k/2}$
at the end time.
Moreover, this flow $g(t)$ has initially, at $t=\ell_1$, a lower injectivity radius bound, and such a bound will remain at time $\ell_2$ because of the curvature bound for the flow, giving 
$\inj_{g(\ell_2)}\geq \sqrt{\ell_2/C}$.
Thus at the \emph{end} time of the extension, the extended metric is isometrically embedded within a complete Riemannian manifold with good bounds on its curvature and all its derivatives, as well as its injectivity radius.
\ermk

\begin{proof}[{Proof of Theorem \ref{main_exist_thm2}}]
We may assume, without loss of generality, that $\al_0 \geq 1$: if $\al_0 <1$ 
then
$\Ric_{g_0} \geq -\al_0 $ implies $\Ric_{g_0} \geq -1$ and so we replace $\al_0$ in this case by $1$.
From the Bishop-Gromov comparison principle, by reducing $v_0$ to a smaller positive number depending on $\al_0$ and the original $v_0$, we may assume without loss of generality that
$\VolB_{g_0}(x,r) \geq v_0 r^3$ for all $x \in  B_{g_0}(x_0,s_0-1)$ and for all $r\in (0, 1]$. 
Let $c_0$ and $\hatTtau$ be the constants given by
Lemma \ref{ExtensionLemma2} for this new $v_0$.

Since $B_{g_0}(x_0,s_0)$ is  compactly contained in  $M$, we can be sure that
$\sup_{B_{g_0}(x_0,s_0)} |\Rm|_{g_0} \leq \rho^{-2}< \infty$,
and $B_{g_0}(x,\rho)\subset\subset M$ 
and $\inj_{g_0}(x)\geq \rho$ for all 
$x\in B_{g_0}(x_0,s_0)$,
for some $\rho\in (0,\half]$ depending on $(M,g_0)$, $x_0$ and $s_0$.
By using Hochard's Lemma, Lemma \ref{HochardsLemma2} of this paper, 
with $U:=B_{g_0}(x_0,s_0)$,
we can find a connected subset $\ti M\subset U \subset M$ containing 
$B_{g_0}(x_0,s_0-\half)$,  
and a 
smooth, complete metric $\ti g_0$ on $\ti M$
with $\sup_{\ti M} |\Rm|_{\ti g_0} < \infty$  such that on 
$B_{g_0}(x_0,s_1)$, where $s_1 := s_0-1\geq 3$, the metric remains unchanged, i.e. $\ti g_0 = g_0$ there.
By renaming $(\ti M,\ti g_0)$ as $(M,g_0)$ we have reduced the theorem to the following:

{\bf Claim:} Suppose that
$(M^3,g_0)$ is a \emph{complete} Riemannian manifold with 
\emph{bounded curvature}, $x_0\in M$,
$s_1\geq 3$, and 
\begin{equation}
\left\{
\begin{aligned}
& \Ric_{g_0}\geq -\al_0\leq -1 \qquad & &\text{on } B_{g_0}(x_0,s_1)\\
& \VolB_{g_0}(x,r)\geq v_0 r^3>0 \qquad & &\text{for all }r\in (0,1] \text{ and }x\in B_{g_0}(x_0,s_1-r).
\end{aligned}
\right.
\end{equation}
Then there exist $T,\al,c_0>0$ depending only on $\al_0$ and $v_0$,
and a Ricci flow $g(t)$ defined for $t\in [0,T]$ on $B_{g_0}(x_0,s_1-1)$, with
$g(0)=g_0$ where defined, such that
\begin{equation}
\left\{
\begin{aligned}
& \Ric_{g(t)}\geq -\al \qquad & &\text{on } B_{g_0}(x_0,s_1-1)\\
& |\Rm|_{g(t)}\leq c_0/t \qquad & &\text{on } B_{g_0}(x_0,s_1-1)
\end{aligned}
\right.
\end{equation}
for all $t\in (0,T]$.

Now that we have reduced to the case that 
$(M,g_0)$ is complete, with bounded curvature, we can take Shi's Ricci flow: There exists a smooth, complete, bounded-curvature Ricci flow $g(t)$ on $M$ for some nontrivial time interval $[0,t_1]$ with $g(0)=g_0$.
In view of the boundedness of the curvature, after possibly reducing $t_1$ to a smaller positive value, we may trivially assume that
$|\Rm|_{g(t)} \leq  \frac{c_0}{t}$ for all $t\in (0,t_1]$
and $\Ric_{g(t)}\geq -\frac{\hatTtau}{t_1}$ for all $t\in [0,t_1]$.

\bcmt{In Remark \ref{tau_j_rmk}, we argue that we can get more information if here we reduce $t_1$ if necessary so that $t_1=\nu^{2j}$ for some $j\in \N$, where $\nu$ is as below}

Of course, what is lacking from our flow is uniform control on its existence time. 
If $ t_1 \geq \frac{\hatTtau}{200\al_0 c_0}$, then we \emph{do} have such control, but otherwise we will be able to iteratively apply
the Extension Lemma \ref{ExtensionLemma2} in a manner analogous to that employed by Hochard \cite{hochard}, to get successive extensions 
until we have a flow defined for a uniform time.

Having found $t_1$ and $s_1$, for $i\in \N$ we define $t_i$ and $s_i$ iteratively by setting 
$t_{i+1}=\nu^{-2} t_i$, where $\nu:=(1+\frac{1}{4c_0})^{-1/2}\in (0,1)$, and 
$s_{i+1}=s_i-\mu \sqrt{t_{i+1}}$, where $\mu:=6\hatTtau^{-1/2}$.
Thus 
\beqa
s_i &= s_{i-1}-\mu\sqrt{t_i}\\
&= s_{i-2}-\mu\sqrt{t_i}-\mu\sqrt{t_{i-1}}\\
&= s_{i-2}-\mu\sqrt{t_i}[1+\nu]\\
&= s_{1}-\mu\sqrt{t_i}[1+\nu+\cdots +\nu^{i-2}]\\
&> s_1-\frac{\mu}{1-\nu}\sqrt{t_i}.
\eeqa
Our iterative assertion, that we have established above for $i=1$ is:
$$I(i)\left\{\begin{tabular}{l}
We have constructed a smooth Ricci flow $g(t)$ on 
$B_{g_0}(x_0,s_i)$ for $t \in [0,t_i]$, \\ 
with $g(0)=g_0$ on this ball, such that
on $B_{g_0}(x_0,s_i)$, and for $t \in (0,t_i]$, we have \\
$ |\Rm|_{g(t)} \leq \frac{c_0}{t} $ 
and
$\Ric_{g(t)}\geq -\frac{\hatTtau}{t_i}.$
\end{tabular}\right.$$

The Extension Lemma \ref{ExtensionLemma2}, applied with $r_1=s_i$ and $\ell_1=t_i$, tells us that $I(i)$ implies $I(i+1)$, provided that $t_i$ remains below 
$\frac{\hatTtau}{200\al_0 c_0}$, and $s_i$ does not get too small.
More precisely we iteratively apply that lemma until either
$t_i>\frac{\hatTtau}{200\al_0 c_0}$ or $t_{i+1}> \frac{(1-\nu)^2}{\mu^2}$. 
The latter requirement ensures that 
$\frac{\mu}{1-\nu}\sqrt{t_i}\leq 1$ for $i\geq 2$ up to when we stop iterating, which in turn ensures that $s_i\geq s_1-1\geq 2$ as required for the radius $r_1$ in the extension lemma.
Either way, the final assertion $I(i)$ tells us that we have
constructed the desired Ricci flow $g(t)$ for the time interval 
$[0,t_i]$, which has positive length bounded below depending only on
$\al_0$ and $v_0$, defined on the domain
$B_{g_0}(x_0,s_i)$ where 
$s_i > s_1-\frac{\mu}{1-\nu}\sqrt{t_i}\geq s_1-1$.
This completes the proof of the claim, and hence of Theorem \ref{main_exist_thm2}.
\end{proof}

\section{{Proof of the Mollification theorem \ref{mollifier_thm}}}
\label{moll_pf_sect}

The existence assertion of Theorem \ref{mollifier_thm} will follow rapidly from the Local Existence Theorem \ref{main_exist_thm2}. 
Before we can apply that theorem, we must observe that a standard volume comparison argument tells us that there exists some smaller $v_0>0$ depending only on the original $v_0$, $\ep$ and $\al_0$, such that for all $x\in B_{g_0}(x_0,1-\frac{\ep}{4})$, we have $\VolB_{g_0}(x,\frac{\ep}{4})\geq v_0$.

We can then rescale the initial metric $g_0$ to $\ti g_0:=\frac{16}{\ep^2}g_0$,
i.e. expand distances by a factor of $4/\ep$, to put ourselves in exactly the situation of Theorem \ref{main_exist_thm2}, with $s_0=4/\ep\geq 40$, for some new $\al_0>0$ depending only on the old $\al_0$ and $\ep$.
The output of Theorem \ref{main_exist_thm2} is a Ricci flow on 
$B_{\ti g_0}(x_0,s_0-2)$ with estimates, and after returning to the original scaling, we have
a Ricci flow $g(t)$ on $B_{g_0}(x_0,1-\ep/2)$, for $t\in [0,T]$, where $T>0$ depends only on $\al_0$, $v_0$ and $\ep$, with $g(0)=g_0$ where defined, and so that 
\begin{equation}
\label{littly}
\left\{
\begin{aligned}
& \Ric_{g(t)}\geq -\al \\
& |\Rm|_{g(t)}\leq c_0/t 
\end{aligned}
\right.
\end{equation}
on $B_{g_0}(x_0,1-\ep/2)$,
for all $t\in (0,T]$, for some $\al,c_0>0$ depending only on $\al_0$, $v_0$ and $\ep$.


It remains to establish the desired properties of $g(t)$, and in order to do this we may have to reduce $T$ to a smaller positive number, with the same dependencies. Looking first at the desired curvature control of \eqref{curv_control_moll}, 
we observe that the first assertion was already obtained in
the second inequality of \eqref{littly} above on an even larger domain.
This larger domain is useful, however, since it allows us to 
invoke Shi's local derivative estimates to give the 
bounds for the higher derivatives claimed in the second inequality 
of \eqref{curv_control_moll}, as we now explain.
First we apply the Shrinking Balls Lemma \ref{SBL} to deduce that after possibly reducing $T>0$ depending on $c_0$ and $\ep$, we have $B_{g(t/2)}(x_0,1-\frac{2}{3}\ep)\subset B_{g_0}(x_0,1-\frac{\ep}{2})$, for all $t\in [0,T]$. Then, we reduce $T>0$ further if necessary, 
depending on $\al$ and $\ep$, to be sure that 
$B_{g(t/2)}(x_0,1-\frac{5}{6}\ep)\supset B_{g_0}(x_0,1-\ep)$ for all $t\in [0,T]$,
this time by Lemma \ref{EBL2}.
To obtain the required bounds for the higher derivatives at time $t$, we can then apply Shi's estimates over the time interval $[t/2,t]$ on
the ball $B_{g(t/2)}(x_0,1-\frac{2}{3}\ep)$.
A reference for Shi's estimates in almost the required form is \cite[Theorem 14.14]{chowRFTA_V2P2}, although one needs to observe that the constant $C(\al,K,r,m,n)$ in that theorem can be given as 
$C(\al,r,m,n)K$ by a simple rescaling argument, and indeed to rescale in order to obtain the estimates on the whole of $B_{g(t/2)}(x_0,1-\frac{5}{6}\ep)\supset B_{g_0}(x_0,1-\ep)$ at time $t$.

At this point we can restrict the Ricci flow to
$B_{g_0}(x_0,1-\ep)$.

Next, we apply the Shrinking Balls Lemma \ref{SBL} again to deduce that after possibly reducing $T>0$ depending on $c_0$ and $\ep$, we have $B_{g(t)}(x_0,1-2\ep)\subset B_{g_0}(x_0,1-\ep)$, for all $t\in [0,T]$.
In turn, this allows us to apply our  Lower Volume Control Lemma, \cite[Lemma 2.3]{ST1} with $\ga=1-2\ep$.
In order to do so, we first observe that by volume comparison, there is a positive lower bound for $\VolB_{g_0}(x_0,1-2\ep)$ depending only on $v_0$ and $\al_0$. (There is not even a dependence on $\ep$ because we are assuming $\ep\leq 1/10$.)
The output of that lemma is that after possibly reducing $T>0$ a little further, without adding any dependencies, there exists $v>0$ as claimed so that
$\VolB_{g(t)}(x_0,1-2\ep)\geq v$ for all $t\in [0,T]$, which is the remaining part of 
\eqref{moll_thm_conc1}.

Prior to addressing the claims on the distance function, we must verify that for $s,t\in [0,T]$, we have 
$B_{g(s)}(x_0,\half-2\ep)\subset B_{g(t)}(x_0,\half-\ep)$, provided that we restricted $T>0$ sufficiently, depending only on $\al_0$, $v_0$ and $\ep$.
To see this, observe that either we have
$s\in [t,T]$, in which case it follows from the Shrinking Balls Lemma \ref{SBL} (provided we restrict $T>0$ depending only on 
$c_0$ and $\ep$)
or we have $s\in (0,t)$, in which case 
it follows from the Expanding Balls Lemma \ref{EBL2} (and $T$ must be reduced also depending on $\al$ and $\ep$).

The final parts of the theorem concerning the distance function then follow immediately from
Lemma \ref{biholder_dist2} by setting $r=\half-\ep$, cf. Remark \ref{got0}, completing the proof of the theorem. 
$\hfill\Box$

\bcmt{In the remark below, note we're not claiming that we can get uniform bounds for every $t\in (0,T]$ on all derivatives. The problem is that as we restart in the extension lemma, the derivatives of the curvature are initially uncontrolled. Instead, to get this improved control, we need to make sure that $\tau_i$ is at the end of one of the dyadic time intervals, and Shi has had enough time to work. 
}

\brmk
\label{tau_j_rmk}
We are not claiming that the Ricci flow $(\wasB,g(t))$ we construct in this theorem is (isometric to) a restriction to a local region of a complete Ricci flow. However, by developing a little the statement and proof of Theorem \ref{main_exist_thm2}, one could add the conclusion 
in Theorem \ref{mollifier_thm} that
for $\nu:=(1+\frac{1}{4c_0})^{-1/2}\in (0,1)$ as used in the proof of Theorem \ref{main_exist_thm2}, if we define 
$\tau_j:=\nu^{2j}\downto 0$ then for sufficiently large $j$,
$(\wasB,g(\tau_j))$ can be isometrically embedded within a complete Riemannian manifold
$(\m_j^3,g_j)$ such that
\begin{equation}
\left\{
\begin{aligned}
& |\Rm|_{g_j}\leq C(\al_0,v_0,\ep)/{\tau_j} \\
& \left|\grad^k \Rm\right|_{g_j}\leq 
C(k,\al_0,v_0,\ep)/\tau_j^{1+\frac{k}{2}}
\end{aligned}
\right.
\end{equation}
\emph{globally} throughout $\m_j$, and so that
$$\inj_{g_j}(\m_j)\geq \eta\sqrt{\tau_j}$$
for some $\eta=\eta(\al_0,v_0,\ep)>0$.
See Remark \ref{end_time_controlled_rmk}, and note that in the proof of 
Theorem \ref{main_exist_thm2}, we may as well assume that $t_1=\nu^{2m}$
for some $m\in \N$, by reducing $t_1$ a little if necessary, and so
$t_i=\nu^{2(m+1-i)}$.
\ermk

\section{{Ricci limit spaces in 3D are bi-H\"older to smooth manifolds}}
\label{ricci_lims_sect}

In this section we prove Theorem \ref{preACCT}. The result considers a sequence of coarsely controlled manifolds and obtains compactness; a subsequence converges to an optimally-regular limit. In particular, the limit is much more regular than we learn from Gromov compactness.
The strategy is to regularise the coarsely controlled manifolds using Ricci flow, and in particular using the Mollification Theorem \ref{mollifier_thm}. Once we have regularity, then we have compactness, and a subsequence of the Ricci flows will converge essentially to a Ricci flow whose initial data represents the desired optimally-regular limit of the original sequence of manifolds.

A number of subtleties arise in the course of the proof, even once Theorem \ref{mollifier_thm} has been proved. Because we must work locally, and we only have uniform regularity for positive times, we have to take care over which region we try to extract a limit. We have no real choice other than to work in a time $t>0$ ball, but then we have to ensure that we end up with a limit that is defined on a time $0$ ball of positive radius. For this, we need the Ricci lower bound estimates of \cite{ST1}, as well as the $c_0/t$ decay of the full curvature tensor.

We also have to ensure that the limit Ricci flow has initial data (a metric space) that agrees with the limit of the original initial metrics. For this, we need strong uniform control on the evolution of the Riemannian distance, and again this requires our lower Ricci bounds and upper $c_0/t$ curvature decay.

It may be helpful to record a result ensuring that appropriate smooth \emph{local} convergence of Riemannian manifolds will imply convergence of the distance functions on a sufficiently smaller region.


\begin{lemma}[Convergence of distance functions under local convergence]
\label{dist_conv_lem}
Suppose $(M_i,g_i)$ is a sequence of smooth $n$-dimensional Riemannian manifolds, not necessarily complete,
and that $x_i\in M_i$ for each $i$. 
Suppose that there exist a smooth, possibly incomplete $n$-dimensional Riemannian manifold 
$(\n,\hat g)$ and a point $x_0\in \n$ with $B_{\hat g}(x_0,2r)\subset\subset\n$ for
some $r>0$, 
and a sequence of smooth maps $\vph_i:\n\to M_i$, diffeomorphic onto their images and mapping $x_0$ to $x_i$, such that 
$\vph_i^*g_i \to \hat g$ smoothly on 
$\overline{B_{\hat g}(x_0,2r)}$.
Then 
\begin{compactenum}
\item
If $0<a\leq 2r$, and $a<b$, then $\vph_i(B_{\hat g}(x_0,a))\subset
B_{g_i}(x_i,b)$ for sufficiently large $i$.
\item
If $0<a<b\leq 2r$, then $B_{g_i}(x_i,a)\subset 
\subset \vph_i(B_{\hat g}(x_0,b))$  for sufficiently large $i$.
\item
For every $s\in (0,r)$, we have convergence of the distance functions
$$d_{g_i}(\vph_i(x),\vph_i(y))\to d_{\hat g}(x,y)$$
as $i\to\infty$, uniformly as $x$ and $y$ vary within $B_{\hat g}(x_0,s)$.
\end{compactenum}
\end{lemma}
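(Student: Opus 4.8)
### Proof proposal for Lemma \ref{dist_conv_lem}

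\textbf{Overall strategy.} The three assertions are really a package: parts (1) and (2) are crude inclusions of balls that follow from the uniform smooth convergence $\vph_i^*g_i\to\hat g$ on $\overline{B_{\hat g}(x_0,2r)}$, and part (3) uses (1) and (2) together with a standard ``geodesics can be assumed to stay in the half-ball'' argument (Remark \ref{dist_rmk}) to upgrade these inclusions into a genuine uniform convergence of distance functions. The plan is to prove (1) and (2) first by elementary length comparisons, then deduce (3).

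\textbf{Proof of (1) and (2).} Fix a compact set on which we have control, say $\overline{B_{\hat g}(x_0,2r)}$. Since $\vph_i^*g_i\to\hat g$ smoothly there, for any $\eta>0$ we have $(1-\eta)\hat g\le \vph_i^*g_i\le(1+\eta)\hat g$ on $\overline{B_{\hat g}(x_0,2r)}$ for all large $i$; hence lengths of curves lying in $B_{\hat g}(x_0,2r)$ are distorted by at most a factor $(1\pm\eta)^{1/2}$ under $\vph_i$. For (1): given $x\in B_{\hat g}(x_0,a)$ with $a\le 2r$ and $a<b$, take the radial (or any near-minimising) $\hat g$-path from $x_0$ to $x$ inside $B_{\hat g}(x_0,a)\subset B_{\hat g}(x_0,2r)$; its $\vph_i^*g_i$-length is at most $(1+\eta)^{1/2}a<b$ for $\eta$ small, so $\vph_i(x)\in B_{g_i}(x_i,b)$. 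For (2): suppose $z\in B_{g_i}(x_i,a)$ with $0<a<b\le 2r$. I would argue that a $g_i$-minimising path from $x_i$ to $z$, which has $g_i$-length $<a<b\le 2r$, cannot leave $\vph_i(B_{\hat g}(x_0,b))$: if it did, it would have to cross the annulus $\vph_i(B_{\hat g}(x_0,b)\setminus B_{\hat g}(x_0,a'))$ for $a<a'<b$, and by the two-sided metric comparison such a crossing sub-path has $g_i$-length at least $(1-\eta)^{1/2}(b-a')$, which exceeds $a$ once $a'$ is chosen close enough to $a$ and $\eta$ small --- a contradiction once $i$ is large. Hence the whole path, and in particular $z$, lies in $\vph_i(B_{\hat g}(x_0,b))$, and moreover with room to spare, giving the $\subset\subset$. (One must also note $B_{\hat g}(x_0,b)\subset\subset\n$ since $b\le 2r$, so $\vph_i(B_{\hat g}(x_0,b))$ is a genuine relatively compact chart image.)

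\textbf{Proof of (3).} Fix $s\in(0,r)$ and $x,y\in B_{\hat g}(x_0,s)$. For the upper bound on $d_{g_i}(\vph_i(x),\vph_i(y))$: take a near-minimising $\hat g$-geodesic from $x$ to $y$; by Remark \ref{dist_rmk} (applied in $(\n,\hat g)$ with the ball $B_{\hat g}(x_0,2s)$, noting $x,y$ lie in its half-ball $B_{\hat g}(x_0,s)$) we may assume it stays inside $B_{\hat g}(x_0,2s)\subset B_{\hat g}(x_0,2r)$, and its $\vph_i^*g_i$-length is at most $(1+\eta)^{1/2}d_{\hat g}(x,y)+o(1)$, giving $d_{g_i}(\vph_i(x),\vph_i(y))\le (1+\eta)^{1/2}d_{\hat g}(x,y)$ for large $i$. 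For the lower bound: let $\sigma_i$ be a $g_i$-minimising path from $\vph_i(x)$ to $\vph_i(y)$ inside $M_i$; its $g_i$-length is at most the upper bound just obtained, hence at most (say) $3s<3r$ for large $i$. I claim $\sigma_i$ stays inside $\vph_i(B_{\hat g}(x_0,2r))$: if not, it leaves $\vph_i(B_{\hat g}(x_0,s))$ and crosses the annulus $\vph_i(B_{\hat g}(x_0,2r)\setminus B_{\hat g}(x_0,s))$, contributing $g_i$-length at least $(1-\eta)^{1/2}(2r-s)>3s$ for $\eta$ small (here I use $s<r$ so that $2r-s>r>s$, with a bit of slack --- one can shrink $s$-thresholds as needed, or simply choose the radii $s<s'<r$ more carefully so that the crossing length beats $3s'$). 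Then $\vph_i^{-1}\circ\sigma_i$ is a path in $B_{\hat g}(x_0,2r)$ from $x$ to $y$ with $\hat g$-length at most $(1-\eta)^{-1/2}d_{g_i}(\vph_i(x),\vph_i(y))$, so $d_{\hat g}(x,y)\le (1-\eta)^{-1/2}d_{g_i}(\vph_i(x),\vph_i(y))$. Combining, $|d_{g_i}(\vph_i(x),\vph_i(y))-d_{\hat g}(x,y)|\le C\eta\,(s+\mathrm{const})$ for large $i$, uniformly in $x,y\in B_{\hat g}(x_0,s)$; since $\eta>0$ was arbitrary this gives the claimed uniform convergence.

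\textbf{Main obstacle.} The only genuinely delicate point is confinement of the minimising paths $\sigma_i$ (and of the comparison $\hat g$-geodesics) to the region $\overline{B_{\hat g}(x_0,2r)}$ where we actually have metric comparison: a priori a $g_i$-minimising path between two nearby points could wander far out into $M_i$, outside all the chart images, where we control nothing. The fix is the length-versus-crossing estimate above, using that any escaping path must traverse a definite metric annulus whose width is bounded below independently of $i$ (by the two-sided comparison of $\vph_i^*g_i$ with $\hat g$), so that escaping costs more length than the path has. Getting the book-keeping of radii ($s<s'<r<2r$) consistent so that the ``crossing cost'' always strictly dominates the ``length budget'' is the one place requiring a little care, but it is purely a matter of choosing the intermediate radii and $\eta$ appropriately; there is no analytic difficulty.
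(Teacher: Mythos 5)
Your Parts (1) and (2) follow the paper's approach in spirit, though in Part (2) your bookkeeping is slightly off: you estimate the $g_i$-length of the annulus-crossing sub-path by $(1-\eta)^{1/2}(b-a')$ and then take $a'$ close to $a$, which only yields a contradiction when $b>2a$. The correct point (and what the paper does) is that the path starts at the \emph{centre} $x_i=\vph_i(x_0)$, so the pulled-back path traverses from $x_0$ all the way out to $\partial B_{\hat g}(x_0,b)$ and has $\hat g$-length at least $b$, not $b-a'$; this gives the contradiction for any $a<b$. You also implicitly assume a $g_i$-minimising path from $x_i$ to $z$ exists, which is not automatic in an incomplete $M_i$; the paper simply takes a path of length $\leq a$ and truncates it.

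The real gap is in your confinement argument for the lower bound in Part (3). You bound the length budget of $\sigma_i$ by (roughly) $2s$, and you bound the cost of escaping $\vph_i(B_{\hat g}(x_0,2r))$ from below by $(1-\eta)^{1/2}(2r-s)$. For a contradiction you need $2r-s>2s$, i.e.\ $s<2r/3$. But the lemma must hold for every $s\in(0,r)$, and in the actual application in the proof of Theorem \ref{preACCT} it is used with $s=\frac14-\frac32\ep$ and $r$ only infinitesimally larger than $s$, so $s/r$ is arbitrarily close to $1$. Saying ``shrink the $s$-thresholds'' or ``choose $s<s'<r$ more carefully'' does not help, because $s$ is given and the escape cost $2r-s$ cannot be increased. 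What actually works, and what the paper does, is to dispense with the annulus-crossing picture entirely: by Part (1), $\vph_i(x),\vph_i(y)\in B_{g_i}(x_i,s_1)$ with $s_1=(s+r)/2$, and by Remark \ref{dist_rmk} any $g_i$-minimising geodesic between two points in the half-ball $B_{g_i}(x_i,s_1)$ stays inside $B_{g_i}(x_i,2s_1)$; since $2s_1=s+r<2r$, Part (2) gives $B_{g_i}(x_i,2s_1)\subset\subset\vph_i(B_{\hat g}(x_0,2r))$, which both confines the geodesic to the chart and guarantees its existence via compactness. This inequality $s+r<2r$ holds for \emph{all} $s<r$ and has positive margin $r-s$, whereas your $2r-3s>0$ condition fails precisely in the regime the paper needs.
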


\bcmt{we've left the `Remark \ref{dist_rmk}' issues implicit in Part 3, which I think is OK now we have parts 1 and 2.}

\begin{proof}[{Proof of Lemma \ref{dist_conv_lem}}]
For Part 1, if $x\in B_{\hat g}(x_0,a)$, then we can take a minimising $\hat g$-geodesic $\si$ from $x_0$ to $x$, so that $\vph_i\circ\si$ is a path of length less than $b$ for sufficiently large $i$, independent of $x$.

For Part 2, first note that since we are free to adjust $a$ and $b$ a little, it suffices to prove an inclusion rather than a compact inclusion.
If the inclusion failed for every $i$ after taking a subsequence, then we could take a sequence of points 
$z_i\in B_{g_i}(x_i,a)$ not in
$\vph_i(B_{\hat g}(x_0,b))$. We could then take a smooth path 
$\si_i:[0,1]\to M_i$ connecting $x_i$ to $z_i$ with $g_i$-length no more than $a$, and thus lying within $B_{g_i}(x_i,a)$.
(We do not know at this stage whether $\si_i$ can be taken to be a minimising geodesic.)
By truncating $\si_i$, 
and modifying $z_i$ accordingly,
we may assume that $\si_i([0,1))\subset \vph_i(B_{\hat g}(x_0,b))$. The path $\vph_i^{-1}\circ\si_i$ must then have $\hat g$-length only a little more than the 
$g_i$-length of $\si_i$, and in particular
less than $b$, for sufficiently large $i$, 
which is a contradiction.

For Part 3, 
let $\de>0$ be arbitrary; we would like to show that for sufficiently large $i$,  we have
$$|d_{g_i}(\vph_i(x),\vph_i(y))- d_{\hat g}(x,y)|\leq \de,$$
for every $x,y\in B_{\hat g}(x_0,s)$.
For any such $x,y$, let $\si$ be a minimising geodesic with respect to $\hat g$ that connects these points, and 
note that $\si$ must remain within $B_{\hat g}(x_0,2s)\subset\subset
B_{\hat g}(x_0,2r)$, cf. Remark \ref{dist_rmk}. 
For each $i$ we map $\si$ forwards to $\vph_i\circ\si$. By the convergence $\vph_i^*g_i \to \hat g$, for sufficiently large $i$ (independent of the particular $x$ and $y$ we chose but depending on $r$) the length cannot increase by more than $\de$, i.e.
$$d_{g_i}(\vph_i(x),\vph_i(y))- d_{\hat g}(x,y)\leq \de.$$
To prove the reverse direction, 
set $s_1=(s+r)/2$, so $s<s_1<r$, 
and throw away finitely many terms in $i$ so that the image under $\vph_i$ of $B_{\hat g}(x_0,s)$ is contained in $B_{g_i}(x_i,s_1)$ (by Part 1).
Therefore, for any two points $x,y\in B_{\hat g}(x_0,s)$, any minimising geodesic
(with respect to $g_i$) connecting $\vph_i(x)$ and $\vph_i(y)$ 
must lie within the ball $B_{g_i}(x_i,2s_1)$,
and at least one such geodesic must exist because
$B_{g_i}(x_i,2s_1)\subset\subset  \vph_i(B_{\hat g}(x_0,2r))$
by Part 2.
%
%
%
Thus by considering the length of preimages under $\vph_i$ of minimising geodesics between $\vph_i(x)$ and $\vph_i(y)$, we see that
$$d_{\hat g}(x,y)-d_{g_i}(\vph_i(x),\vph_i(y))\leq \de,$$
for sufficiently large $i$, independent of $x$ and $y$ (but depending on $r$).
\end{proof}



\begin{proof}[{Proof of Theorem \ref{preACCT}}]
We begin by applying the Mollification Theorem \ref{mollifier_thm} for each $i$, with $\ep=1/100$ fixed. The result is a collection of positive constants $T,v,\al,c_0$ as in that theorem, and a sequence of Ricci flows $g_i(t)$, $t\in [0,T]$, defined on the balls $B_{g_i}(x_i,1-\ep)$, with $g_i(0)=g_i$ where defined, such that for all $t\in [0,T]$ we have
$B_{g_i(t)}(x_i,1-2\ep)\subset\subset B_{g_i}(x_i,1-\ep)$ where the Ricci flows are defined,
and  
\begin{equation}
\label{moll_thm_conc_app}
\left\{
\begin{aligned}
& \Ric_{g_i(t)}\geq -\al \qquad & &\text{on } 
B_{g_i}(x_i,1-\ep)\\
& \VolB_{g_i(t)}(x_i,1-2\ep)\geq v>0, & &\\
\end{aligned}
\right.
\end{equation}
while for all $t\in (0,T]$ we have
\begin{equation}
\left\{
\begin{aligned}
\label{curv_control_moll_app}
& |\Rm|_{g_i(t)}\leq c_0/t \\
& \left|\grad^k \Rm\right|_{g_i(t)}\leq 
C/t^{1+\frac{k}{2}}
\end{aligned}
\right.
\end{equation}
on $B_{g_i}(x_i,1-\ep)$ for any $k\in\N$, where
$C$ depends on $k$, $\al_0$ and $v_0$ (since $\ep$ has been fixed).
Moreover, Theorem \ref{mollifier_thm} also tells us that
for each $i\in \N$, $s\in [0,T]$ and 
$X,Y\in B_{g_i(s)}(x_i,\half-2\ep)$, we have
$X,Y\in B_{g_i(t)}(x_i,\half-\ep)$ for all $t\in [0,T]$ so 
the infimum length of curves  within 
$(B_{g_i}(x_i,1-\ep),g_i(t))$ 
connecting $X$ and $Y$
is realised by a geodesic within $B_{g_i(t)}(x_i,1-2\ep)\subset B_{g_i}(x_i,1-\ep)$ 
where $g_i(t)$ is defined.
Moreover, for any $t\in [0,T]$
we have
\beq
\label{main_dist_est_for_g_i}
d_{g_i}(X,Y)-\be\sqrt{c_0 t}
\leq d_{g_i(t)}(X,Y)
\leq e^{\al t}d_{g_i}(X,Y).
\eeq

During the proof it will be necessary to take these flows, and other flows with the same estimates, and argue that balls of a given radius $r\in (0,\half)$ at a given time lie within balls of slightly larger radius $r+\ep$ at any different time. For this to work, we will apply Lemmata \ref{EBL2} and \ref{SBL}, which will do the job provided that 
$T$ is small compared with $\ep$ (and $\ep^2$) depending also on $\al$ and $c_0$
(we have $n=3$). We will also need to be able to bound $R_0$ in an application of 
Lemma \ref{biholder_dist2}. 
With hindsight, it will be enough to reduce $T>0$ if necessary so that
\beq
\label{Talep}
T\leq \frac{\ep^2}{100c_0\be^2}\qquad\text{and}\qquad
T\leq\frac{\ep}{100\al},
\eeq
where $\be$ is from Lemma \ref{SBL}.

The curvature bounds coming from the Mollification Theorem \ref{mollifier_thm}  allow us to obtain compactness using the argument of Hamilton-Cheeger-Gromov. 
Our situation is a bit easier than theirs in that we have already argued that all the derivatives of the curvature are bounded, and as is standard, by differentiating the Ricci flow equation we also have uniform control on all space-\emph{time} derivatives of the curvature.
However, there is an extra subtlety arising from working only locally in that we have to carefully choose the region on which we work in order for the standard argument to go through verbatim. In particular, we do not work on a time $t=0$ ball since its geometry with respect to a time $t>0$ metric is uncontrolled, and the standard covering arguments used in the compactness theory would fail.

Instead we work on a time $t>0$ ball, and we choose to work at the final time $t=T$.
We have established above that the Ricci flows $g_i(t)$ are each defined on the ball
$B_{g_i(T)}(x_i,1-2\ep)$, for $t\in [0,T]$.
By Remark \ref{dist_rmk}, if we halve the radius, and consider the ball
$B_{g_i(T)}(x_i,\half-\ep)$, then the distance function is unambiguous and is realised by a minimising geodesic lying within the region $B_{g_i(T)}(x_i,1-2\ep)$ where the metric $g_i(T)$ is defined.
By our volume and curvature bounds, we can obtain compactness on, say, $\overline{B_{g_i(T)}(x_i,\half-2\ep)}$.
More precisely, 
after passing to a subsequence
there exist a (typically incomplete) manifold $(\n,g_\infty)$ and a point $x_0\in\n$ such that $B_{g_\infty}(x_0,\half-2\ep)\subset\subset \n$,
and a sequence of smooth maps
$\vph_i:\n\to 
B_{g_i(T)}(x_i,1-2\ep)\subset M_i$,
diffeomorphic onto their image, and mapping $x_0$ to $x_i$,
such that $\vph_i^* g_i(T)\to g_\infty$ smoothly on
$\overline{B_{g_\infty}(x_0,\half-2\ep)}$.

We observe that with our set-up, this compactness assertion follows from the usual proofs in the smooth case. As an alternative to requiring detailed knowledge of the proofs, we 
note that by Remark \ref{tau_j_rmk}, after reducing $T$ to make it an integral power of $\nu^2=(1+\frac{1}{4c_0})^{-1}$, we could extend Theorem \ref{mollifier_thm} and obtain that
in fact $(B_{g_i}(x_i,1-\ep),g_i(T))$ can be isometrically embedded in a complete Riemannian manifold with uniform ($i$-independent) bounds on all derivatives of the curvature and a positive uniform lower bound on the injectivity radius. Thus compactness can be obtained from the standard theorems, after which we can restrict to $\overline{B_{g_\infty}(x_0,\half-2\ep)}$ in the limit.

Once we have compactness at time $T$, Hamilton's original argument, using the uniform curvature bounds for positive time, allows us to pass to a further sequence in $i$ to obtain a smooth  Ricci flow $g(t)$ living on $\overline{B_{g_\infty}(x_0,\half-2\ep)}$ for $t\in (0,T]$ such that $\vph_i^*g_i(t)\to g(t)$ smoothly locally on 
$\overline{B_{g_\infty}(x_0,\half-2\ep)}\times  (0,T]$. (Here $g(T)=g_\infty$ where $g(T)$ is defined.)

\bcmt{we only construct the Ricci flow on this closed ball, not on $\n$. We are only claiming convergence of the metrics at time $T$ on this closed ball, so currently have no choice}

Because of the smooth convergence, the curvature estimates in \eqref{moll_thm_conc_app} and 
\eqref{curv_control_moll_app} pass to the limit, and we have
\begin{equation}
\label{curv_bds_in_limit}
\left\{
\begin{aligned}
& \Ric_{g(t)}\geq -\al \\
& |\Rm|_{g(t)}\leq c_0/t 
\end{aligned}
\right.
\end{equation}
on the whole domain $\overline{B_{g_\infty}(x_0,\half-2\ep)}$ for all $t\in (0,T]$.

\bcmt{Currently not passing volume bounds to the limit.}

\bcmt{$g(T)$ is easier to picture than $g_\infty$, but the latter is defined on the whole of $\n$}

We have constructed a Ricci flow on a reasonably-sized ball with respect to $g(T)$, but 
we have to be concerned that this region might be much smaller with respect to earlier metrics, and might even be contained within a $g(t)$ ball of radius $r(t)$ that converges to zero as $t\downto 0$.
We will be rescued from this possibility by the refinement of the Expanding Balls Lemma, given in Lemma \ref{EBL2};
because we reduced $T$ in \eqref{Talep}, that lemma turns our control on the Ricci tensor 
in \eqref{curv_bds_in_limit} into the inclusion
\beq
\label{initial_inclusion}
B_{g(t)}(x_0,\half-3\ep)\subset\subset B_{g(T)}(x_0,\half-2\ep),
\eeq
for any $t\in (0,T]$.
This tells us, via Remark \ref{dist_rmk}, that if 
$x,y\in B_{g(t)}(x_0,\frac14-\frac32\ep)$, the corresponding ball of half the radius, then the $g(t)$-distance  between $x$ and $y$ 
is realised by a minimising geodesic lying within the domain of definition of the Ricci flow $g(t)$ (see Remark \ref{dist_rmk}).
Also, we can apply Lemma \ref{biholder_dist2}
with $r=\frac14-\frac32\ep$ to give us an extension of $d_{g(t)}$ to a metric $d_0$
on $\Om_T$, as defined in that lemma, together with the distance estimates
\beq
\label{specialised_main_dist_est2_repeat}
d_0(x,y)-\be\sqrt{c_0t}
\leq d_{g(t)}(x,y)
\leq e^{\al t}d_0(x,y),
\eeq
and
\beq
\label{holder_est_repeat}
d_{g(t)}(x,y)\geq \ga \left[d_0(x,y)\right]^{1+4c_0},
\eeq
for all $t\in (0,T]$, $x,y\in\Om_T$, and some $\ga<\infty$ depending only on $\al_0$ and $v_0$.
(Note that these estimates imply that $d_0$ generates the same topology as we have already 
on $\Om_T$.)
Moreover, by our restrictions on $T$ from \eqref{Talep}, we can be sure that
$R_0>\frac14-2\ep$, where $R_0$ is defined in Lemma \ref{biholder_dist2}, and so we can set $R=\frac14-2\ep$, in which case \eqref{ball_inclusions}
tells us that
\beq
\label{compact_inclusions_in_m}
B_{g(t)}(x_0,\frac14-2\ep)\subset\subset \m\qquad\text{and}\qquad
B_{d_0}(x_0,\frac14-2\ep)\subset\subset \m,
\eeq
for all $t\in (0,T]$, where $\m$ is the connected component of
$\Interior(\Om_T)$ containing $x_0$.

In summary, we have so far constructed a smooth limit Ricci flow $g(t)$ for $t\in (0,T]$
on $\overline{B_{g_\infty}(x_i,\half-2\ep)}$, and extended its distance function uniformly to $t=0$ on an open subdomain $\m$ that compactly contains both $B_{g(t)}(x_0,\frac14-2\ep)$, for each $t\in (0,T]$, and also $B_{d_0}(x_0,\frac14-2\ep)$.

In what follows, we will need to consider the distance with respect to $g_i(t)$ between image points $\vph_i(x)$ and $\vph_i(y)$, and we pause to verify that such distances are realised by minimising geodesics for arbitrary $x,y$ in the domain of definition of the Ricci flow $g(t)$.
By Part 1 of Lemma \ref{dist_conv_lem},
after omitting finitely many terms in $i$,
the image of the entire flow domain $\overline{B_{g(T)}(x_0,\half-2\ep)}$ under $\vph_i$ must lie within $B_{g_i(T)}(x_0,\half-\frac32 \ep)$, say, which in turn, by the Shrinking Balls Lemma \ref{SBL}, and \eqref{Talep}, must lie within $B_{g_i(t)}(x_0,\half- \ep)$ for all $t\in [0,T]$.
Since the Mollification Theorem told us that 
$B_{g_i(t)}(x_i,1-2\ep)\subset\subset B_{g_i}(x_i,1-\ep)$ where $g_i(t)$ is defined, we see that
for all $x,y\in \overline{B_{g(T)}(x_0,\half-2\ep)}$, the distance between $\vph_i(x)$ and
$\vph_i(y)$ with respect to any $g_i(t)$ is realised by a minimising geodesic lying within
$B_{g_i(t)}(x_i,1-2\ep)$, cf. Remark \ref{dist_rmk}.

Our essential task now is to compare distances $d_0(x,y)$ and $d_{g_i}(\vph_i(x),\vph_i(y))$
for $x$ and $y$ in $\m$.
The rough strategy is as follows. First, by the distance estimates 
\eqref{specialised_main_dist_est2_repeat} 
that came from 
Lemma \ref{biholder_dist2}, we know that $d_0(x,y)$ is close to 
$d_{g(t)}(x,y)$ for $t\in (0,T]$ small. Second, by the convergence of $g_i(t)$ to $g(t)$, we expect that $d_{g(t)}(x,y)$ should be close to $d_{g_i(t)}(\vph_i(x),\vph_i(y))$. 
Third, by the distance estimates \eqref{main_dist_est_for_g_i} coming from the Mollification Theorem, 
$d_{g_i(t)}(\vph_i(x),\vph_i(y))$ should be close to $d_{g_i}(\vph_i(x),\vph_i(y))$. 

{\bf Claim:} As $i\to\infty$, we have  convergence 
$$d_{g_i}(\vph_i(x),\vph_i(y))\to d_0(x,y)$$
uniformly as $x,y$ vary over $\m$. 

\emph{Proof of Claim:}
Let $\de>0$. We must make sure that for sufficiently large $i$, depending on $\de$, 
we have 
\beq
\label{implies_claim}
|d_{g_i}(\vph_i(x),\vph_i(y))- d_0(x,y)|\leq \de\qquad\text{ for every }
x,y\in \m. 
\eeq
By the distance estimates \eqref{specialised_main_dist_est2_repeat} that came from our application of
Lemma \ref{biholder_dist2}, there exists $t_1\in (0,T]$ such that for all $t\in [0,t_1]$,
we have
\beq
\label{est1}
|d_0(x,y)-d_{g(t)}(x,y)|<\de/3\qquad\text{for all }x,y\in \m. 
\eeq
We need a similar estimate for $g_i(t)$. 
By the distance estimates \eqref{main_dist_est_for_g_i} coming from the Mollification Theorem, we know that there exists $t_2\in (0,T]$ such that for all $t\in [0,t_2]$, we have
\beq
\label{est2}
|d_{g_i}(X,Y)-d_{g_i(t)}(X,Y)|<\de/3,\quad\text{whenever }\exists\, s\in [0,T]
\text{ s.t. }
X,Y\in B_{g_i(s)}(x_i,\half-2\ep).
\eeq
We fix $t_0=\min\{t_1,t_2\}$ so that both \eqref{est1} and \eqref{est2} hold for $t=t_0$.
(In fact, we could have naturally picked $t_1$ and $t_2$ the same from the outset.)

By definition of $\Om_T$, and hence $\m$, we have 
\beq
\label{m_inclusion}
\textstyle
\m\subset B_{g(t_0)}(x_0,\frac14-\frac32\ep),
\eeq
and by \eqref{initial_inclusion}
we can pick $r>\frac14-\frac32\ep$ so that
$$\textstyle
B_{g(t_0)}(x_0,2r)\subset\subset B_{g(T)}(x_0,\half-2\ep),$$
where $g(t_0)$ and the maps $\vph_i$ are defined and we have the 
convergence $\vph_i^*g_i(t_0)\to g(t_0)$.
Therefore we can apply Lemma \ref{dist_conv_lem}, with $s=\frac14-\frac32\ep$
and $\hat g$ and $g_i$ there equal to $g(t_0)$ and $g_i(t_0)$ here, respectively, to 
conclude that
\beq
\label{unif_conv}
d_{g_i(t_0)}(\vph_i(x),\vph_i(y)) \to d_{g(t_0)}(x,y)
\eeq
as $i\to\infty$,  uniformly in $x,y\in \m$.

By combining \eqref{est1} and \eqref{est2} for $t=t_0$, and \eqref{unif_conv}, we will have proved 
\eqref{implies_claim} and hence the claim, provided that 
\eqref{est2} holds for $X=\vph_i(x)$ and $Y=\vph_i(y)$. But by \eqref{unif_conv} applied for first $(x,y)=(x,x_0)$ and then $(x,y)=(x_0,y)$, we see by \eqref{m_inclusion} that
$\vph_i(x),\vph_i(y)\in B_{g_i(t_0)}(x_i,\half-2\ep)$ for sufficiently large $i$ as required.
\hfill$//$

What the claim
tells us is that for arbitrarily small $\de>0$, the maps $\vph_i$ are $\de$-Gromov Hausdorff approximations from 
$\m$
to $\vph_i(\m)$ for sufficiently large $i$.
Unusually for such approximations, the maps are smooth.

\bcmt{Note: can't use Lemma \ref{dist_conv_lem} below because it won't work at $t=0$}

Another immediate consequence of  the claim
is that for all $\eta>0$, for sufficiently large $i$ we have $\vph_i(\cb)\subset B_{g_i}(x_i,1/10+\eta)$,
where 
$\cb:=\overline{B_{d_0}(x_0,1/10)}\subset\m$ (recall \eqref{compact_inclusions_in_m})
which is the second inclusion of \eqref{sandwich2}. To obtain the first inclusion, we first need to clarify how large the image $\vph_i(\m)$ is.
By the first part of \eqref{compact_inclusions_in_m}, with $t=T$, we have
$B_{g(T)}(x_0,\frac14-2\ep)\subset\subset \m$.
Therefore, by Part 2 of Lemma \ref{dist_conv_lem} we have
$$\textstyle
\vph_i(\m)\supset \vph_i(B_{g(T)}(x_0,\frac14-2\ep)) \supset B_{g_i(T)}(x_i,
\frac14-3\ep
),$$
after deleting finitely many terms in $i$.
Using once more the Expanding Balls Lemma \ref{EBL2}, and \eqref{Talep}, we can conclude that
$$\textstyle
\vph_i(\m)\supset B_{g_i(T)}(x_i,\frac{1}{4}-3\ep)
\supset
B_{g_i(0)}(x_i,\frac{1}{4}-4\ep)
\supset B_{g_i}(x_i,1/10-\eta).$$
Now that we are sure the image of the maps $\vph_i$ is large enough, we immediately obtain from the Claim
the restricted statement
$$\vph_i(\cb)\supset B_{g_i}(x_i,1/10-\eta),$$
after dropping finitely many terms in $i$, which is the first 
inclusion of \eqref{sandwich2}.

Now we have proved the existence and claimed properties of the maps $\vph_i$, 
it is easy to perturb them to Gromov-Hausdorff approximations 
$(\cb,d_0)\to (\overline{B_{g_i}(x_i,1/10)},d_{g_i})$, and we obtain
the claimed Gromov-Hausdorff convergence
$(\overline{B_{g_i}(x_i,1/10)},d_{g_i})\to (\cb,d_0)$.

Finally we turn to the Lipschitz and H\"older claims of the theorem.
Whichever 
metric $g$ we take on $\m$, the distance $d_g$ will be bi-Lipschitz equivalent to 
$d_{g(T)}$ once we restrict to $\cb$, so we need only prove the claims with $g$ replaced by
the (incomplete) metric $g(T)$
from the Ricci flow we have constructed.
The second inequality of 
\eqref{specialised_main_dist_est2_repeat} tells us that 
$d_{g(T)}(x,y)\leq e^{\al T}d_0(x,y)$, which ensures that the identity map 
$(\cb,d_0)\to (\cb,d_{g(T)})$ is Lipschitz continuous.
Meanwhile, \eqref{holder_est_repeat} tells us that
$d_{g(T)}(x,y)\geq \ga \left[d_0(x,y)\right]^{1+4c_0}$,
which implies that the identity map 
$(\cb,d_{g(T)})\to (\cb,d_0)$ is H\"older continuous with H\"older exponent
$\frac{1}{1+4c_0}$.
\end{proof}

\bcmt{We used to ask that $g$ is complete. This makes it very easy to see the claim above that the distance $d_g$ will be Lipschitz equivalent to $d_{g(T)}$ once we restrict to $\cb$. But it's true anyway, with the caveat that this is the only place in the paper where we consider distance metrics coming from Riemannian metrics where the distance is not necessarily realised by a minimising geodesic.
To see the Lipschitz equivalence, note that because 
$\cb\subset\subset\m$, then in fact we have that for some 
small $\de>0$ and set $\Om\subset\subset\m$, we have that for all
$x\in B_{d_0}(x_0,1/10)$, we have $B_{g}(x_0,\de)\subset\subset U\subset\subset\m$.
Now for points $x,y\in \cb$ with $d_g(x,y)<\de/100$, we must have a minimising geodesic between them with respect to $g$, that lies within 
$U$. By measuring that with respect to $g(T)$, we see that
$d_{g(T)}(x,y)\leq C d_{g}(x,y)$ for $C$ depending on the metrics and $U$. But for $d_g(x,y)\geq \de/100$, this inequality is obvious if we allow $C$ to depend on $\de$ and the supremum of $d_{g(T)}$
over $\cb\times\cb$. Thus we have a Lipschitz bound one way round.
But we can switch the metrics in this argument to get the reverse direction.}

\bcmt{note that we're not saying anything about the regularity of the boundary of $\cb$ within $\m$.}

\section{{Proof of the Anderson, Cheeger, Colding, Tian conjecture}}
\label{ACCTcomplete_sect}

\bcmt{An $n$-dimensional topological manifold is a second countable Hausdorff space that is locally Euclidean of dimension $n$ We're dealing with metric spaces, so second countable is equivalent to separability, and this is clear. I won't mention it}

\begin{proof}[Proof of Corollary \ref{ACCTcomplete}]
First note that it is expected that
it is impossible 
to flow from $(M_i,g_i)$ since we are not assuming uniform global noncollapsing.
As a result, this time it is more convenient to start by appealing to Gromov compactness to get 
a complete limit length space
$$(M_i,d_{g_i},y_i)\to (X,d_X,y_0)$$
in the pointed Gromov-Hausdorff sense, for some subsequence in $i$.

\bcmt{a ref for cptness theorem - BBI 8.1.10}

To show that the topological space $M$ induced by $(X,d_X)$ 
is in fact a  manifold with bi-H\"older charts as claimed in the corollary, we must
show that given an arbitrary $x \in X$, there is a neighbourhood of $x $ that is bi-H\"older homeomorphic to a ball in $\R^3$, or indeed to some open subset of a complete
Riemannian three-manifold.

By definition of the pointed convergence above, and the fact that the limit is a length space, for $r:=d_X(x ,y_0)$ there exists 
a sequence $f_i$ of $\ep(i)$-Gromov-Hausdorff 
approximations $B_{d_X}(y_0,r+1)\to B_{d_{g_i}}(y_i,r+1)$, where $\ep(i)\downto 0$ as $i\to\infty$, with $f_i(y_0)=y_i$ for each $i$.
Defining $x_i:=f_i(x)$, we obtain the pointed Gromov-Hausdorff 
convergence 
$$(\overline{B_{{g_i}}(x_i,1/10)},d_{g_i},x_i)
\to
(\overline{B_{d_X}(x,1/10)},d_X,x),$$
with the limit being compact.

%
%

By the hypotheses \eqref{ACCTcomplete_ests}
and the fact that $d_{g_i}(y_i,x_i)$ converges to $d_X(y_0,x )=r$, and so is bounded, 
we must have a uniform lower bound 
$\VolB_{g_i}(x_i,1)\geq \ti v_0$, for some $\ti v_0>0$ independent of $i$, by Bishop-Gromov.
Therefore we can apply Theorem \ref{preACCT} with 
$v_0$ there equal to $\ti v_0$ here, to show that 
there exist a smooth  three-dimensional manifold without boundary $\m$, containing a point $x_0$,  and a metric
$d_0:\m\times \m\to [0,\infty)$ generating the same topology as $\m$
such that $B_{d_0}(x_0,1/10)\subset\subset\m$ and so that
after passing to a subsequence the compact metric spaces 
$(\overline{B_{g_i}(x_i,1/10)},d_{g_i})$ Gromov-Hausdorff converge to 
$(\cb,d_0)$, where $\cb=\overline{B_{d_0}(x_0,1/10)}$.
In fact, that theorem implies the pointed Gromov-Hausdorff convergence
$$(\overline{B_{g_i}(x_i,1/10)},d_{g_i},x_i)
\to
(\cb,d_0,x_0)
.$$
Moreover, that theorem tells us that the identity map from 
$(\cb,d_0)$ to $(\cb,d_g)$, for any smooth complete metric $g$ on $\m$,
is bi-H\"older.

By uniqueness of compact limits under pointed 
Gromov-Hausdorff convergence \cite[Theorem 8.1.7]{BBI},
$(\cb,d_0)$ must be 
isometric to the restriction of $(X,d_X)$
to the closed  ball centred at $x$ of radius $1/10$, 
via an isometry that identifies $x_0$ and $x$.

\bcmt{In general, for pointed GH convergence, we need boundedly compact and complete to get uniqueness of limits - BBI Thm 8.1.7. Incidentally, `boundedly compact' is inherited from the approximating sequence if we only take complete limits (as we do) - see BBI Ex. 8.1.8. Ditto `length space', of course}

Consequently, we find that
$(B_{d_0}(x_0,1/10),d_0)$ is isometric to $(B_{d_X}(x,1/10),d_X)$, and we have found a bi-H\"older homeomorphism from this neighbourhood
to $(B_{d_0}(x_0,1/10),d_g)$.
\end{proof}

\cmt{
\brmk
\label{ACCT_ext_rmk}
Of other possible results that could be proved with the same technology as in this paper, we mention that Corollary \ref{ACCTcomplete} could be strengthened to conclude that the Gromov-Hausdorff limit is  homeomorphic to a smooth manifold via 
a homeomorphism that is locally bi-H\"older, analogously to Theorem \ref{preACCT}. 
The direct approach to this would be to argue that we can construct a \emph{partial} Ricci flow starting at the limit space, that is a genuine Ricci flow when restricted to balls of finite radius, but whose existence time might converge to zero when restricted to larger and larger balls.
\ermk
}

\section{{Proof of the Global Existence Theorem \ref{global_exist_thm}}}
\label{global_sect}

Before beginning the proof, we recall the following special case of
a result of B.L. Chen \cite[Theorem 3.1]{strong_uniqueness}.
Similar results were independently proved by the first author
\cite[Theorem 1.3]{MilesIMRN}.

\begin{lemma}
\label{Rm_ext_lem}
Suppose $(M^n,g(t))$ is a  Ricci flow for $t\in [0,T]$, not necessarily complete, with the property that for some $y_0\in M$ and $r>0$, and all
$t\in [0,T]$, we have $B_{g(t)}(y_0,r)\subset\subset M$ and 
\begin{equation}
|\Rm|_{g(t)}\leq \frac{c_0}{t} \qquad \text{ throughout } 
B_{g(t)}(y_0,r),\text{ for all } t\in (0,T],
\end{equation}
and for some $c_0\geq 1$.
Then if $|\Rm|_{g(0)}\leq r^{-2}$ on $B_{g(0)}(y_0,r)$, we must have
$$|\Rm|_{g(t)}(y_0)\leq e^{C c_0}r^{-2}$$
where $C$ depends only on $n$.
\end{lemma}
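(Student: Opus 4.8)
This lemma is the specialisation of \cite[Theorem 3.1]{strong_uniqueness} (see also \cite[Theorem 1.3]{MilesIMRN}) to a curvature bound of the form $|\Rm|\le c_0/t$, so the cleanest course is to quote it directly. If one wished instead to reprove it in the present notation, the plan would be as follows.

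First, parabolically rescale, replacing $g(t)$ by $r^{-2}g(r^2t)$, so that $r=1$: the hypotheses become $|\Rm|_{g(t)}\le c_0/t$ on $B_{g(t)}(y_0,1)$ for $t\in(0,T]$ and $|\Rm|_{g(0)}\le 1$ on $B_{g(0)}(y_0,1)$, and the goal is $|\Rm|_{g(t)}(y_0)\le e^{Cc_0}$ with $C=C(n)$. For $t\ge\tau_0:=(256\be^2c_0)^{-1}$ this is immediate, since $|\Rm|_{g(t)}(y_0)\le c_0/t\le 256\be^2c_0^2\le e^{Cc_0}$ once $C=C(n)$ is chosen large (using $c_0\ge1$, $\be\ge1$). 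So attention is restricted to $t\in(0,\tau_0]$, where $\tau_0$ has been chosen precisely so that $\be\sqrt{c_0t}\le\tfrac1{16}$; then the Shrinking Balls Lemma~\ref{SBL} guarantees $B_{g(t)}(y_0,\tfrac14)\subset B_{g(0)}(y_0,1)\cap B_{g(t)}(y_0,1)$ for all such $t$, so the curvature bound is available on the whole cylinder $B_{g(t)}(y_0,\tfrac14)\times(0,\tau_0]$.

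The core is a localised maximum principle for $|\Rm|$ on that cylinder. One takes $F(x,t)=\psi\!\big(d_{g(t)}(y_0,x)+\be\sqrt{c_0t}\big)^2\,|\Rm|_{g(t)}(x)$, with $\psi:\R\to[0,1]$ a fixed smooth nonincreasing cutoff, $\psi\equiv1$ on $(-\infty,\tfrac18]$ and $\psi\equiv0$ on $[\tfrac14,\infty)$; thus $F(y_0,t)=|\Rm|_{g(t)}(y_0)$ and $\max_xF(\cdot,0)\le1$. The shift $\be\sqrt{c_0t}$ matches the Shrinking Balls Lemma and, combined with the Hamilton--Perelman bound $\tfrac{d}{dt}d_{g(t)}(y_0,\cdot)\ge-\tfrac{\be}{2}\sqrt{c_0/t}$ from \cite{ST1} (valid along $g(t)$-minimising geodesics of length $<\tfrac14$, which lie in $B_{g(t)}(y_0,1)$), makes the $\partial_t$-of-cutoff term nonpositive. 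On the support of $\psi'$ one has $d_{g(t)}(y_0,\cdot)\ge\tfrac1{16}$, so Laplacian comparison for $\Ric\ge-(n-1)c_0/t$ gives $\Delta d_{g(t)}(y_0,\cdot)\le C(n)\sqrt{c_0/t}$ there. Plugging $\partial_t|\Rm|\le\Delta|\Rm|+c(n)|\Rm|^2$ into $(\partial_t-\Delta)F$ at an interior spatial maximum (where $\nabla F=0$ and $\Delta F\le0$), using these two distance facts together with the crucial observation that the reaction term is tamed by the hypothesis, $\psi^2|\Rm|^2\le\tfrac{c_0}{t}F$, yields a differential inequality for $t\mapsto\max_xF(\cdot,t)$ of Gr\"onwall type with coefficient $\asymp c_0/t$, plus lower-order source terms supported in the $\psi'$-annulus.

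The genuine obstacle is that, a priori, $|\Rm|$ on the cutoff annulus $\{d_{g(t)}(y_0,\cdot)\in[\tfrac1{16},\tfrac14]\}$ is controlled only by $c_0/t$, which is large for small $t$, so the bare supremum of $F$ need not beat $c_0/t$ and one cannot simply integrate. The resolution, which is the technical heart of Chen's argument, is to run the estimate as a bootstrap: assume the target bound $|\Rm|\le e^{Cc_0}$ holds on a slightly larger region up to a first hypothetical failure time $t_*$, use it both to bound the annulus source terms and to replace $c_0/t$ by $e^{Cc_0}$ on that region, then a short-time Gr\"onwall/comparison argument over dyadic subintervals of $(0,t_*]$ shows $F$ stays strictly below the target there, contradicting the choice of $t_*$; the exponential factor $e^{Cc_0}$ is exactly what accumulates when one telescopes the per-step gain over those subintervals. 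Equivalently one may argue by Perelman-style point selection, blowing up at a would-be large-curvature point: the hypothesis $|\Rm|\le c_0/t$ forces the rescaled limit to be flat, or else prevents the blow-up region from reaching the incomplete boundary within the $\be\sqrt{c_0t}$ ``light cone'', which is the contradiction.
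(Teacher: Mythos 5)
Your primary recommendation — to quote Chen's Theorem 3.1 (or Simon's \cite[Theorem 1.3]{MilesIMRN}) directly — is exactly what the paper does: the lemma is presented without proof, explicitly introduced as a recalled special case of those results. Your subsequent sketch of a localised maximum-principle/bootstrap argument is a plausible outline of how such results are established, but since the paper offers no proof of its own there is nothing to compare it against beyond noting that the citation you chose matches the paper's.
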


\bcmt{Miles has done an additional check of this precise form of this Chen lemma}

We will also need a slightly nonstandard version of Shi's derivative estimates, as phrased in \cite[Lemma A.4]{JEMS}. For a slightly stronger result, and proof, see \cite[Theorem 14.16]{chowRFTA_V2P2}.

\begin{lemma}
\label{Shi_time0}
Suppose $(M^n,g(t))$ is a  Ricci flow for $t\in [0,T]$, not necessarily complete, with the property that for some $y_0\in M$ and $r>0$,
we have $B_{g(0)}(y_0,r)\subset\subset M$, and $|\Rm|_{g(t)}\leq r^{-2}$ throughout $B_{g(0)}(y_0,r)$ for all $t\in [0,T]$, and 
so that for some $l_0\in \N$ we initially have
$|\grad^l\Rm|_{g(0)}\leq r^{-2-l}$  throughout $B_{g(0)}(y_0,r)$ 
for all $l\in \{1,\ldots, l_0\}$.
Then there exists $C<\infty$ depending only on $l_0$, $n$ and an upper bound for $T/r^2$ such that 
$$|\grad^l \Rm |_{g(t)}(y_0)\leq C r^{-2-l}$$
for every $l\in \{1,\ldots,l_0\}$ and 
all $t\in [0,T]$.
\end{lemma}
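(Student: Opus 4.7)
The plan is to reduce by parabolic rescaling to the case $r = 1$, so that the hypotheses become $|\Rm|_{g(t)} \leq 1$ on $B_{g(0)}(y_0, 1)$ for all $t \in [0,T]$ with $T$ bounded by the assumed upper bound for $T/r^2$, and $|\grad^l \Rm|_{g(0)} \leq 1$ on the same ball for each $l \in \{1,\ldots,l_0\}$. It is enough then to bound $|\grad^l\Rm|_{g(t)}(y_0)$ by a constant depending only on $l_0, n$ and $T$. Since $|\Ric|_{g(t)} \leq C(n)$ on the ball, the metrics $g(t)$ remain uniformly equivalent to $g(0)$ on $B_{g(0)}(y_0,1)$ throughout $[0,T]$, a fact I will use repeatedly to pass between $g(0)$-geometry and $g(t)$-geometry.

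I would proceed by induction on $l \in \{0,1,\ldots,l_0\}$, the base case $l=0$ being given. For the inductive step, assume we have produced radii $1 = \rho_0 > \rho_1 > \cdots > \rho_{l-1} > 0$ and constants $C_k = C_k(l_0,n,T)$ such that $|\grad^k\Rm|_{g(t)} \leq C_k$ on $B_{g(0)}(y_0,\rho_k)$ for every $t \in [0,T]$ and $k<l$. Starting from the standard Bando--Shi evolution inequality
\begin{equation*}
(\partial_t - \Delta)|\grad^l\Rm|^2 \leq -2|\grad^{l+1}\Rm|^2 + c(l,n)\sum_{i+j=l}|\grad^i\Rm|\,|\grad^j\Rm|\,|\grad^l\Rm|,
\end{equation*}
choose a time-independent spatial cutoff $\eta$ (with respect to $g(0)$) supported in $B_{g(0)}(y_0,\rho_{l-1})$ and identically $1$ on $B_{g(0)}(y_0,\rho_l)$ for some $\rho_l \in (0,\rho_{l-1})$. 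The metric equivalence noted above ensures that the $g(t)$-covariant derivatives of $\eta$ are uniformly bounded on its support. Form the Bernstein quantity $F := \eta^{2\lambda}\bigl(A + |\grad^{l-1}\Rm|^2\bigr)|\grad^l\Rm|^2$ with $\lambda$ and $A = A(l,n,C_0,\ldots,C_{l-1})$ chosen in the classical Bernstein--Bando--Shi fashion so that the good $-|\grad^{l+1}\Rm|^2$ reaction term absorbs the cross terms produced both by the cutoff and by the mixed products $|\grad^i\Rm|\,|\grad^j\Rm|\,|\grad^l\Rm|$. The inductive bounds on $|\grad^k\Rm|$ for $k<l$ then yield an inequality of the form $(\partial_t - \Delta)F \leq \Lambda F + \Lambda$ on the support of $\eta$, with $\Lambda = \Lambda(l,n,T)$.

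At this point the maximum principle, applied on $B_{g(0)}(y_0,\rho_{l-1}) \times [0,T]$ (noting that $F \equiv 0$ on $\{\eta = 0\}$ so boundary values cause no trouble), gives $F(t) \leq (F(0) + 1)e^{\Lambda T}$, and the initial hypothesis $|\grad^{l-1}\Rm|_{g(0)},|\grad^l\Rm|_{g(0)} \leq 1$ bounds $F(0)$ by a constant depending only on $A$ and $l_0$. Evaluating at $y_0$, where $\eta = 1$, completes the inductive step with a constant $C_l$ of the required form. The main technical obstacle is the bookkeeping in the Bernstein step: one must verify that the auxiliary factor $A + |\grad^{l-1}\Rm|^2$, chosen large enough, together with a sufficiently high power of $\eta$, really does absorb all the cross-terms so as to yield a pure $\Lambda F + \Lambda$ right-hand side. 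This is routine and is carried out in detail in \cite[Theorem 14.16]{chowRFTA_V2P2}; the only novelty here is that, because we have initial control on all derivatives up to order $l_0$, there is no need for the $t^{-l/2}$ factor that appears in the classical interior Shi estimate, and the final bound is uniform all the way down to $t=0$.
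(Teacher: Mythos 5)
The paper does not actually supply a proof of this lemma: it is stated with a pointer to \cite[Lemma A.4]{JEMS} and, for a proof, to \cite[Theorem 14.16]{chowRFTA_V2P2}. Your sketch is precisely the Bernstein--Bando--Shi argument carried out in that reference (rescale to $r=1$, induct on the derivative order, use a Bernstein quantity $\eta^{2\lambda}(A+|\grad^{l-1}\Rm|^2)|\grad^l\Rm|^2$ with cutoff, apply the maximum principle, and note that the initial bound on $|\grad^l\Rm|_{g(0)}$ removes the usual $t^{-l/2}$ blow-up), so you are reproducing the same route that the paper outsources.

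One step is stated too casually and is a genuine soft spot if taken literally: ``the metric equivalence noted above ensures that the $g(t)$-covariant derivatives of $\eta$ are uniformly bounded on its support.'' Metric equivalence of $g(t)$ and $g(0)$ (from $|\Ric|\leq C(n)$) controls $|\grad_{g(t)}\eta|$ directly, but $\Delta_{g(t)}\eta$ also involves the Christoffel difference $\Gamma(g(t))-\Gamma(g(0))$, and this is a tensor bounded only by $\int_0^t|\grad\Ric|_{g(s)}\,ds$ --- which is exactly the quantity you are in the middle of estimating at the level $l=1$. For $l\geq 2$ the inductive hypothesis already supplies the needed bound, but at $l=1$ you need either a short-time continuity/bootstrap argument (establish the bound on $[0,T_1]$ with $T_1$ depending only on $n$, then iterate to cover $[0,T]$) or some other device, as is done in \cite[Theorem 14.16]{chowRFTA_V2P2}. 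Since you explicitly defer the bookkeeping to that reference, this is more an imprecision in the stated justification than a flaw in the overall plan, but the one-line appeal to metric equivalence does not by itself bound $\Delta_{g(t)}\eta$.
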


\begin{proof}[Proof of Theorem \ref{global_exist_thm}]
Pick any point $x_0\in M$.
For each integer $k\geq 2$, apply Theorem \ref{main_exist_thm2} to the 
manifold $(M,g_0)$, 
with $s_0=k+2$. The conclusion is that there exist $T,v,\al,c_0>0$,
depending only on $\al_0$ and $v_0$, 
so that 
there exists a Ricci flow
$g_k(t)$ defined on $B_{g_0}(x_0,k)$ for $t\in [0,T]$ with $g_k(0)=g_0$ on $B_{g_0}(x_0,k)$ with the properties that
\begin{equation}
\label{g_k_facts}
\left\{
\begin{aligned}
& \Ric_{g_k(t)}\geq -\al\\ 
& |\Rm|_{g_k(t)}\leq c_0/t 
\end{aligned}
\right.
\end{equation}
on $B_{g_0}(x_0,k)$ for all $t\in (0,T]$.

For $r_0>0$ and $k\geq r_0+2$, Lemma \ref{Rm_ext_lem} can now be applied to $g_k(t)$ centred at arbitrary points
$y_0\in B_{g_0}(x_0,r_0+1)$.
Note that the curvature estimate $|\Rm|_{g(t)}\leq c_0/t$
holds on $B_{g_0}(y_0,1)$, and by the Shrinking Balls Lemma \ref{SBL}, this contains $B_{g_k(t)}(y_0,1/2)$ for all $t\in [0,T]$ after possibly reducing $T$ to a smaller positive value depending only on $c_0$, which in turn depends only on $\al_0$ and $v_0$.
The output of Lemma \ref{Rm_ext_lem} is that 
there exists $K_0<\infty$, depending only on 
$\sup_{B_{g_0}(x_0,r_0+2)}|\Rm|_{g_0}$ and $c_0$
and in particular
independent of $k$, such that 
$|\Rm|_{g_k(t)}\leq K_0$ 
throughout $B_{g_0}(x_0,r_0+1)$, for all $t\in [0,T]$.

We may then apply Lemma \ref{Shi_time0} to $g_k(t)$ 
centred at arbitrary points $y_0\in B_{g_0}(x_0,r_0)$.
This time we deduce that for each $l\in \N$
there exists $K_1<\infty$, depending only on 
$l$, $g_0$ and $r_0$, and in particular
independent of $k$, such that 
$$|\grad^l \Rm|_{g(t)}\leq K_1$$ 
throughout $B_{g_0}(x_0,r_0)$, for all $t\in [0,T]$.
%
%
Working in coordinate charts, by Ascoli-Arzel\`a we can pass to a subsequence in $k$
and obtain a smooth limit Ricci flow $g(t)$ on $B_{g_0}(x_0,r_0)$ 
for $t\in [0,T]$ with $g(0)=g_0$
(it is not necessary to take a Cheeger-Hamilton limit here since $g_k(0)=g_0$ on $B_{g_0}(x_0,r_0)$ for sufficiently large $k$).
Moreover, the limit will inherit the curvature bounds
\begin{equation}
\label{gt_facts}
\left\{
\begin{aligned}
& \Ric_{g(t)}\geq -\al\\
& |\Rm|_{g(t)}\leq c_0/t
\end{aligned}
\right.
\end{equation}
on $B_{g_0}(x_0,r_0)$ for all $t\in (0,T]$.

We can now repeat this process for larger and larger $r_0\to\infty$, and take a diagonal subsequence to obtain a smooth limit Ricci flow $g(t)$ on the whole of $M$ for $t\in [0,T]$ with $g(0)=g_0$.
This limit flow must be complete because for arbitrary $t\in (0,T]$ 
and arbitrarily large $r>0$, the Shrinking Balls Lemma \ref{SBL} tells us that $B_{g(t)}(x_0,r)\subset B_{g_0}(x_0,r+\be\sqrt{c_0 t})\subset\subset M$.

Given our curvature control \eqref{gt_facts} and the completeness of
$g(t)$, 
the distance estimates are standard, although they also follow from our more elaborate Lemma \ref{biholder_dist2}.
\end{proof}

\section{Starting a Ricci flow with a Ricci limit space}
\label{RF_from_RLS_sect}

In this section we prove Theorem \ref{RFfromRLS}.

\begin{proof}
Because we are making a stronger assumption in Theorem \ref{RFfromRLS}
than in Corollary \ref{ACCTcomplete}, we can apply Theorem 
\ref{global_exist_thm} to each $(M_i,g_i)$ to give a sequence of 
Ricci flows $g_i(t)$ on $M_i$ with $g_i(0)=g_i$, defined over a uniform time interval
$[0,T]$, and enjoying the uniform estimates
\begin{equation}
\label{i_global_conc_ests}
\left\{
\begin{aligned}
& \Ric_{g_i(t)}\geq -\al & &\\
& \VolB_{g_i(t)}(x,1)\geq v>0 \qquad & &\text{for all }x\in M_i\\
& |\Rm|_{g_i(t)}\leq c_0/t \qquad & &\text{throughout } M_i
\end{aligned}
\right.
\end{equation}
for all $t\in (0,T]$,
and
\beq
\label{i_global_dist_est}
d_{g_i(t_1)}(x,y)-\be\sqrt{c_0}(\sqrt{t_2}-\sqrt{t_1})
\leq d_{g_i(t_2)}(x,y)
\leq e^{\al(t_2-t_1)}d_{g_i(t_1)}(x,y),
\eeq
for any $0\leq t_1\leq t_2\leq T$, and any $x,y\in M_i$.
Because of the uniform curvature bounds for positive times, Hamilton's compactness theorem tells us that we can pass to a subsequence in $i$ so that $(M_i,g_i(t),x_i)\to (M,g(t),x_\infty)$ in the smooth Cheeger-Gromov sense, where $M$ is a smooth manifold, $g(t)$ is a complete Ricci flow on $M$ for $t\in (0,T]$,
$x_\infty\in M$ and we pass the estimates \eqref{i_global_conc_ests} and \eqref{i_global_dist_est}
to the limit to give
\begin{equation}
\label{duplicate_global_conc_ests}
\left\{
\begin{aligned}
& \Ric_{g(t)}\geq -\al & &\\
& \VolB_{g(t)}(x,1)\geq v>0 \qquad & &\text{for all }x\in M\\
& |\Rm|_{g(t)}\leq c_0/t \qquad & &\text{throughout } M
\end{aligned}
\right.
\end{equation}
for all $t\in (0,T]$, and 
\beq
\label{dup_global_dist_est}
d_{g(t_1)}(x,y)-\be\sqrt{c_0}(\sqrt{t_2}-\sqrt{t_1})
\leq d_{g(t_2)}(x,y)
\leq e^{\al(t_2-t_1)}d_{g(t_1)}(x,y),
\eeq
for any $0< t_1\leq t_2\leq T$, and any $x,y\in M$.
This final estimate tells us that there exists a metric $d_0$ on $M$ to which $d_{g(t)}$ converges locally uniformly as $t\downto 0$.
It also tells us, when combined with the smooth convergence of $(M_i,g_i(t),x_i)$ to $(M,g(t),x_\infty)$ for positive times and \eqref{i_global_dist_est} that 
$$(M_i,d_{g_i},x_i)\to (M,d_0,x_\infty)$$
in the pointed Gromov-Hausdorff sense, which can be considered an easier version of the argument in Section \ref{ricci_lims_sect}.
\end{proof}

{\sc MS: institut f\"ur analysis und numerik (IAN),
universit\"at magdeburg, universit\"atsplatz 2,
39106 magdeburg, germany}

{\sc PT: mathematics institute, university of warwick, coventry, CV4 7AL,
uk}\\
\url{https://www.warwick.ac.uk/~maseq}
\end{document}